\tikzstyle{V}=[fill=black,circle,scale=0.2, outer sep = 4pt]
\newtheorem{thm}{Theorem}[section]
\newtheorem{prop}[thm]{Proposition}
\newtheorem{cor}[thm]{Corollary}
\newtheorem{lemma}[thm]{Lemma}
\theoremstyle{remark}
\newtheorem{rmk}[thm]{Remark}
\newtheorem{example}[thm]{Example}
\theoremstyle{definition}
\newtheorem{defn}[thm]{Definition}
\newtheorem{proposition}[thm]{Proposition}
\DeclareMathOperator{\Aut}{Aut}
\newcommand{\z}{^{0}}
\renewcommand{\sp}{\textrm{span}}
\newcommand{\bi}{\begin{itemize}}
\newcommand{\ei}{\end{itemize}}
\newcommand{\be}{\begin{enumerate}}
\newcommand{\ee}{\end{enumerate}}
\newcommand{\C}{\mathbb{C}}
\newcommand{\T}{\mathbb{T}}
\newcommand{\R}{\mathbb{R}}
\newcommand{\N}{\mathbb{N}}
\newcommand{\Z}{\mathbb{Z}}
\renewcommand{\l}{\langle}
\newcommand{\clsp}{\operatorname{\overline{span}}}
\newcommand{\Quad}{\quad\quad\quad\quad\quad\quad\quad\quad\quad}
\newcommand{\qqquad}{\quad\quad\quad\quad\quad}
\renewcommand{\r}{\rangle}
\providecommand{\keywords}[1]{{\textit{Key words and phrases:}} #1}
\providecommand{\classification}[1]{{\textit{2010 Mathematics Subject Classification:}} #1}
\title{Separable representations, KMS states, and wavelets for higher-rank graphs}
\author{Carla Farsi, Elizabeth Gillaspy, Sooran Kang, and Judith Packer}
\begin{document}

\maketitle

\begin{abstract}
Let $\Lambda$ be a strongly connected, finite higher-rank graph.  In this paper, we construct  representations of $C^*(\Lambda)$ on certain separable Hilbert spaces of the form $L^2(X,\mu)$, by introducing the notion of a  $\Lambda$-semibranching function system (a generalization of the  semibranching function systems studied by Marcolli and Paolucci). In particular, if $\Lambda$ is aperiodic, we obtain a faithful representation of $C^*(\Lambda)$ on $L^2(\Lambda^\infty, M)$, where $M$ is the  Perron-Frobenius probability measure on the infinite path space $\Lambda^\infty$ recently studied by an Huef, Laca, Raeburn, and Sims.  We also show how a $\Lambda$-semibranching function system gives rise to KMS states for $C^*(\Lambda)$. For  the higher-rank graphs of Robertson and Steger, we also obtain a   representation of $C^*(\Lambda)$ on  $L^2(X, \mu)$, where $X$ is a fractal subspace of $[0,1]$ by embedding $\Lambda^{\infty}$ into $[0,1]$ as a fractal subset $X$ of  $[0,1]$.
In this latter case  we additionally show that there exists a KMS state for $C^*(\Lambda)$ whose inverse temperature is equal to the Hausdorff dimension of $X$.  Finally, we construct a wavelet system for $L^2(\Lambda^\infty, M)$  by generalizing the work of Marcolli and Paolucci from graphs to higher-rank graphs.
\end{abstract}

\classification{46L05.}

\keywords{$\Lambda$-semibranching function systems; separable representations; Cantor-type fractal subspaces of $[0,1]$; Cuntz-Krieger $C^*$-algebras of $k$-graphs.}

\tableofcontents

\section{Introduction}

 Higher-rank graphs (or $k$-graphs) and their $C^*$-algebras were introduced by Kumjian and Pask in \cite{KP1} as generalizations of Cuntz-Krieger  $C^*$-algebras associated to directed graphs (cf.~\cite{enomoto-watatani-1, bprw, kpr}).  Building on work by Robertson and Steger \cite{RS-London, RS}, the higher-rank graph $C^*$-algebras of \cite{KP1}, and their twisted counterparts (developed in \cite{KPS-hlogy, KPS-twisted, SWW}) 
 share many of the important properties of graph $C^*$-algebras, including  Cuntz-Krieger uniqueness theorems and realizations as groupoid $C^*$-algebras.  Moreover, many important examples of $C^*$-algebras (such as noncommutative tori) can be viewed as twisted $k$-graph  $C^*$-algebras \cite{KPS-hlogy}. Further examples of  $k$-graph $C^*$-algebras, including specific examples, and relationships with dynamical systems theory, can be found in recent work of Pask, Raeburn, and collaborators (\cite{paskraebuweaver-periodic, paskraebuweaver-family}). Over the years, many techniques have been developed for analyzing  $K$-groups of (twisted)  Cuntz-Krieger $k$-graph $C^*$-algebras \cite{RS-K-Th, E, allen-pask-sims-dual}  and their primitive ideal spaces \cite{CKSS, KP}, as well as for studying KMS states associated to a variety of dynamical systems associated on them \cite{aHLRS1, aHLRS2, aHLRS3, aHKR}.

Although several different types of  representations of Cuntz-Krieger $k$-graph $C^*$-algebras have been studied in many of the references cited above, these representations have almost always been on nonseparable $\ell^2$-spaces canonically associated to the underlying  higher-rank graphs. Robertson and Steger  noted in Remark 3.9 of \cite{RS} that there exist nontrivial  representations of their  higher rank Cuntz-Krieger algebras on separable Hilbert spaces, but these representations do not seem to have been explicitly constructed.

One of our main goals in this paper is to describe faithful  separable representations of Cuntz-Krieger $C^*$-algebras associated to strongly connected, finite, aperiodic $k$-graphs   (of which the Robertson-Steger algebras of \cite{RS} are an example): see Theorems \ref{th-branching-sys-gives-repres} and  \ref{prop:M=Hausd}.
Additionally, we use one of these representations to construct a wavelet system on $L^2(\Lambda^\infty, M)$, where $M$ is the Perron-Frobenius Borel probability measure (hereafter referred to as the Perron-Frobenius measure) on the infinite path space $\Lambda^\infty$ constructed in Proposition 8.1 of \cite{aHLRS3}.
We also study the KMS states associated to these   representations.

To construct our representations, as well as the wavelets mentioned above, we build on the work of Marcolli and Paolucci (\cite{MP}) and Bezuglyi and Jorgensen (\cite{bezuglyi-jorgensen}).  Marcolli and Paolucci use the concept of a semibranching function system to define representations of the Cuntz-Krieger $C^*$-algebra $\mathcal{O}_A$ 
on several different Euclidean  fractal spaces associated to the  matrix $A$, while Bezuglyi and Jorgensen construct representations of $\mathcal{O}_A$ on infinite path spaces associated to stationary Bratteli diagrams.
Indeed, many of Marcolli and Paolucci's  representations 
are also on the infinite path space  $(X_A, \mu_A)$ of the 1-graph with adjacency matrix $A$, 
where $\mu_A$ is a probability measure on $X_A$ associated to the Perron-Frobenius eigenvector of $A$.

The existence of similar measure spaces for higher-rank graphs is well established in the literature.
Already  in  \cite{KPActn},  Kumjian and Pask had described a probability measure on the two-sided infinite path space of a $k$-graph for $k>1,$ and in \cite{aHLRS3}, an Huef, Laca, Raeburn and Sims  detail a Perron-Frobenius probability measure on the one-sided infinite path space of a strongly connected finite $k$-graph.  Thus, we hoped that analogues of the semibranching function systems of \cite{MP} and \cite{bezuglyi-jorgensen} would allow us to construct faithful representations of higher-rank graphs on such measure spaces.

In order to construct our  separable representations, we first present a generalization of the notion of 
a semibranching function system that has as part of its associated data a  probability measure $\mu$ on a (fractal) space $X.$   In almost all of our examples $X = \Lambda^{\infty}$ with $\mu= M$ the Perron-Frobenius measure introduced in \cite{aHLRS3} (cf.~Definition \ref{def:perron-frobenius} below). In particular, our generalized ``$\Lambda$-semibranching function systems" 
are systems of  partially defined shift operators  for which the associated Radon-Nikodym derivatives are almost everywhere non-zero  on their domains (see Definition \ref{def-lambda-brach-system} for details).  From a  $\Lambda$-semibranching function system, Theorem \ref{th-branching-sys-gives-repres} then tells us how to obtain a  representation of $C^*(\Lambda)$ on  $L^2(X, \mu)$, which is faithful when $\Lambda$ is aperiodic.   We give two examples of these representations in Proposition \ref{prop-main-exampl} (on $L^2(\Lambda^\infty, M)$) and in Theorem \ref{prop:M=Hausd} (on a fractal subspace $X$ of $[0,1]$).  We construct the space $X$ by using the vertex adjacency matrices  $A_1, \ldots, A_k$ of $\Lambda$ and thinking of points in $[0,1]$ in their $N$-adic expansions, where $N=|\Lambda^0|$.

In the case $k=1$, our results provide a complementary perspective to those of Marcolli and Paolucci in \cite{MP}, since we do not require (as they do) that the vertex adjacency matrices have entries from $\{0,1\}$.  This can be explained by their interpretation of  the Cuntz-Krieger algebra $\mathcal{O}_A$ as being associated to the adjacency matrix $A$ indexed by the \emph{edges} of a directed graph; whereas we study the vertex adjacency matrices $A_i$ indexed by the \emph{vertices} of a graph or $k$-graph.

 We note that $\Lambda$-semibranching function systems  also  provide a template for establishing the existence of faithful representations of $C^*(\Lambda)$ on other Hilbert spaces.
Indeed, examining recent work of Bezuglyi and Jorgensen  \cite{bezuglyi-jorgensen}, and Jorgensen and Dutkay \cite{dutkay-jorgensen-monic, dutkay-jorgensen-markovmeasure}, we conjecture that the Perron-Frobenius measure of Definition \ref{def:perron-frobenius}, although a useful and canonical example of a probability measure, is potentially just one of many measures that could give faithful representations of  Cuntz-Krieger  $C^*$-algebras associated to  strongly connected finite $k$-graphs.

%
%
In future work \cite{farsi-gillaspy-kang-packer}, we hope to classify $\Lambda$--monic and atomic representations of  Cuntz-Krieger  $C^*$-algebras associated to  strongly connected $k$-graphs  and higher-rank Bratteli diagrams thus generalizing some of the main  results in   \cite{bezuglyi-jorgensen,dutkay-jorgensen-monic,dutkay-jorgensen-markovmeasure,dutkay-jorgensen-atomic}.

The $\Lambda$-semibranching function systems that we construct can also be used to give an explicit construction of many of the KMS states on $C^*(\Lambda)$ whose existence was established by an Huef, Laca, Raeburn, and Sims in \cite{aHLRS3}.

The definition of a KMS state arises from physics.  In this context, a KMS state on the $C^*$-algebra of observables of a physical model represents an equilibrium state with respect to a time evolution (represented by an action of $\R$).  KMS states can be characterized by a commutation condition, which makes sense for any $C^*$-algebra $A$.  Recent research (cf.~\cite{bost-connes, exel, aHLRS1, aHLRS2, laca-etc, aHLRS3}) into the KMS states of abstract $C^*$-algebras has shown that the KMS states of a $C^*$-algebra $A$ often encode  information about important structural properties of $A$.

In \cite{aHLRS3}, an Huef, Laca, Raeburn, and Sims provide a complete description of the KMS states of $C^*(\Lambda)$ for  a strongly connected finite $k$-graph $\Lambda$.  Their description relies on representations of the periodicity group of $\Lambda$. In Section \ref{sec:kms}, we give an explicit construction of many of these KMS states, by using $\Lambda$-semibranching function systems instead of the periodicity group. 
 To be precise, Corollary \ref{cor:KMS} provides an alternative proof  of part of Theorem 11.1 of \cite{aHLRS3}.  Moreover, when  $\Lambda$ is a Robertson-Steger $k$-graph in the sense that the vertex matrices $A_i$ of $\Lambda$ have only $\{0,1\}$ entries and $A_1\dots A_k$ has also $\{0,1\}$ entries, we construct in Corollary \ref{cor-temp-eq-haus-measu}  a KMS$_\beta$ state on $C^*(\Lambda)$ whose inverse temperature $\beta$ is equal to the the fractal dimension of the subspace $X$ of $[0,1]$ arising from the $N$-adic representations of infinite paths of $\Lambda$.

 Having established the existence of  separable representations of $C^*(\Lambda)$ for strongly connected finite $k$-graphs $\Lambda$, we show in Section \ref{sec:wavelets} how to obtain a wavelet-type decomposition of the Hilbert space $L^2(X, \mu)$ associated to our $\Lambda$-semibranching function systems.   As in the work of Jonsson (\cite{jonsson}) and Marcolli and Paolucci (\cite{MP}), one motive for constructing these wavelets is that they give methods of constructing different function spaces on the infinite path spaces for the $k$-graphs being studied, and illustrate how representations of the $C^{\ast}$-algebras corresponding to higher-rank graphs can in turn give information about the path spaces of those graphs.

Additionally, as explained in \cite{crovella-kolaczyk}, a wavelet decomposition of a Hilbert space associated to a network or directed graph is quite useful for performing spatial traffic analysis on the network.  Higher-rank graphs can be viewed as quotients of edge-colored directed graphs,
and in future work we hope to extend the results of Section 5.2 of \cite{MP} to $\Lambda$-semibranching function systems, which we hope will prove useful for spatial traffic analysis on networks with qualitatively different edges.

To describe in further detail our results on wavelets associated to higher-rank graphs, we first recall that Marcolli and Paolucci,  inspired in part by the work of Jonsson in \cite{jonsson}, constructed in \cite{MP} two different families of wavelets associated to the Cuntz-Krieger $C^*$-algebra $\mathcal{O}_A$.    In both \cite{MP} and \cite{jonsson}, the authors provide families of wavelets in $L^2(\Lambda_A,\mu),$ where $\Lambda_A$ is the $N$-adic fractal associated to the matrix $A$, and $\mu$ is the associated Hausdorff measure on $\Lambda_A$. One of the constructions of Marcolli and Paolucci uses the Perron-Frobenius theory of irreducible matrices, and provides a finite set of functions in $L^2(\Lambda_A,\mu)$ that can be shifted around by the generating isometries in $\mathcal{O}_A$ to provide an orthonormal basis for the orthogonal complement of a specified initial space $V_0.$

Building on this construction,  we are able to show that our examples of $\Lambda$-semibranching function systems give rise to a wavelet system in $L^2(\Lambda^{\infty}, M)$, where $M$ is the Perron-Frobenius measure on $\Lambda^\infty$.  
As in \cite{MP}, rather than being dilated and translated, the wavelets are shifted by the partial isometries on $L^2(\Lambda^\infty, M)$ which generate the separable representation of $C^*(\Lambda)$;
see Theorem \ref{Wavelets-Theo} for details.

\subsection{Structure of the paper}
We begin in Section \ref{sec:background} by reviewing the basic concepts we will rely on throughout this paper: (strongly connected) higher-rank graphs, their associated Cuntz-Krieger $C^*$-algebras, and their KMS states. We also present  several useful facts from  Perron-Frobenius theory. 
In this section we also quickly review the basic properties of Hausdorff measures, as we will need these for one of our examples in Section \ref{sec:example}.  In Section \ref{sec:sbfs} we present our definition of  a $\Lambda$-semibranching function system on a finite $k$-graph, and establish our main result (Theorem \ref{th-branching-sys-gives-repres}), namely, that such a $\Lambda$-semibranching function system always gives rise to a  representation of $C^*(\Lambda)$ on a separable Hilbert space $L^2(X, \mu)$, which is faithful when $\Lambda$ is aperiodic.  We also present in Proposition \ref{prop-main-exampl} our main example of $\Lambda$-semibranching function systems, together with a fractal interpretation of it, see Theorem  \ref{prop:M=Hausd}.   We describe how these $\Lambda$-semibranching function systems give rise to KMS states on $C^*(\Lambda)$ in Section \ref{sec:kms}, and show in Section \ref{sec:wavelets} how they give us a wavelet decomposition of $L^2(\Lambda^\infty, M)$ (see Theorem \ref{Wavelets-Theo}).

\subsection{Acknowledgments}  This work was partially supported by a grant from the Simons Foundation (\#316981 to Judith Packer).

\section{Background}
\label{sec:background}

\subsection{Higher-rank graphs and their vertex matrices.} Let $k\in \N$ with $k\ge 1$. We write $e_1,\dots, e_k$ for the generators of $\N^k$. A higher-rank graph, or $k$-graph, is a countable category $\Lambda$ equipped with a functor $d:\Lambda\to \N^k$ satisfying the \textit{factorization property}: for every morphism $\lambda\in\Lambda$ and $m,n\in\N^k$ with $d(\lambda)=m+n$, there exist unique morphisms $\mu,\nu\in\Lambda$ such that $\lambda=\mu\nu$ and $d(\mu)=m$, $d(\nu)=n$.  We will often call morphisms $\lambda \in \Lambda$ \emph{elements} or \emph{(finite) paths} in $\Lambda$, in keeping with our understanding of $k$-graphs as higher-dimensional generalizations of directed graphs. The elements in $\Lambda^0$ are the identity morphisms, and we call them vertices. We write $r,s:\Lambda\to \Lambda^0$ for  the range and source maps in $\Lambda$. For $v,w\in\Lambda^0$ and $n\in\N^k$, we write
\[
\Lambda^n:=\{\lambda\in\Lambda:  d(\lambda)=n\}\;\;\text{and}\;\;v\Lambda w:=\{\lambda\in\Lambda: r(\lambda)=v, s(\lambda)=w\}.
\]
Also for $\mu,\nu\in\Lambda$, we write
\[
\Lambda^{\min}(\mu,\nu)=\{(\eta,\zeta)\in\Lambda\times\Lambda:\mu\eta=\nu\zeta, d(\mu\eta)=d(\mu)\vee d(\nu)\}.
\]
We say that $\Lambda$ is \textit{finite} if $\Lambda^n$ is finite for all $n\in\N^k$ and say that $\Lambda$ has \textit{no sources} if $v\Lambda^n\ne \emptyset$ for all $v\in\Lambda^0$ and $n\in\N^k$; this is equivalent to saying that $v\Lambda^{e_i}\ne \emptyset$ for all $v\in\Lambda$ and all $e_i$. 

\begin{defn}
\label{def:strongly-conn}
\begin{enumerate}
\item  We say that $\Lambda$ is \textit{strongly connected} if, for all $v,w\in\Lambda^0$,  $v\Lambda w\ne \emptyset$.
\item For $1\le i\le k$, let $A_i$ be the matrix of $M_{\Lambda^0}(\N)$ with entries $A_i(v,w)=\vert v\Lambda^{e_i}w\vert$, the number of paths from $w$ to $v$ with degree $e_i$; we call the $A_i$  the \textit{vertex adjacency matrices} of $\Lambda$, or more simply the \textit{``vertex matrices''} for $\Lambda$.
\end{enumerate}
\end{defn}

 We note that if $\Lambda$ is strongly connected, then $\Lambda$ has no sources by Lemma~2.1 of \cite{aHLRS3}.  Also, the factorization property of $\Lambda$ implies that $A_iA_j=A_jA_i$.

 The following definition comes from \cite{aHLRS3} Section 3. 

\begin{defn}
\label{def:irreduc-family-def}
(\cite{aHLRS3} Section 3)
Let $\mathcal{A}= \{A_1, ..., A_k\}$ be a family of nonzero commuting $N \times N$ matrices. We say that  $\mathcal{A}= \{A_1, ..., A_k\}$ is \emph{irreducible} if for every $(s,t )\in N^2$ there exists $ n = (n_1, \ldots, n_k) \in \N^k$ 
(depending on  $(s,t )$) such that
\[
A_1^{n_1}....A_k^{n_k}(s,t) >0.
\]
\end{defn}

To justify the next definition, note  that if  the family of matrices $\mathcal{A}=\{A_1, \ldots, A_k\}$ is irreducible in the sense of Definition \ref{def:irreduc-family-def}, then  the proof of Proposition 3.1 of \cite{aHLRS3} implies that the matrices $A_i$ have a unique common unimodular (i.e., of  $\ell^1$-norm one) eigenvector  with positive entries $x^\Lambda$, such that
\[ A_i x^\Lambda = \rho(A_i) x^\Lambda.\]
Here $\rho(A_i)$ denotes the spectral radius of $A_i$.

It follows from the fact that commuting matrices have the same eigenspaces that $x^\Lambda$ is also the unique unimodular eigenvector with positive entries for the product matrix $A_1 \cdots A_k$.

\begin{defn}
\label{def:irreduc-family-perr-frob-def} Let $\Lambda$ be a strongly connected $k$-graph with vertex matrices $A_1, \ldots, A_k$.
\begin{enumerate}
\item We write $x^\Lambda$ for the unique common unimodular Perron-Frobenius eigenvector of the vertex matrices $A_i$. Note that in particular $x^\Lambda$  has all of its entries positive.  We will call $x^\Lambda$ the \emph{Perron-Frobenius eigenvector} of $\Lambda$.
\item We call eigenvalues associated to $x^\Lambda$  the \textit{Perron-Frobenius eigenvalues}.  As observed above, the Perron-Frobenius eigenvalue for $A_i$ is the spectral radius $\rho(A_i)$ of $A_i$.
\end{enumerate}
\end{defn}

\begin{proposition}  (Lemma~4.1 of \cite{aHLRS3})
\label{def:strongly-conn-char}
A finite $k$-graph $\Lambda$ with vertex matrices $A_1, ..., A_k$ is strongly connected if and only if
$\mathcal{A}= \{A_1, ..., A_k\}$ is an irreducible family of matrices.
\end{proposition}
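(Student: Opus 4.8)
The plan is to reduce the equivalence to a single combinatorial identity relating products of the vertex matrices to path counts, after which both implications become immediate. For $n=(n_1,\dots,n_k)\in\N^k$ write $A^n:=A_1^{n_1}\cdots A_k^{n_k}$; since the vertex matrices commute, this product is independent of the order of its factors and is well defined. I claim that for all $v,w\in\Lambda^0$,
\[
A^n(v,w)=\vert v\Lambda^n w\vert,
\]
the number of paths of degree $n$ with range $v$ and source $w$. Granting this identity, the proposition follows at once: if $\Lambda$ is strongly connected, then for any ordered pair of vertices $(v,w)$ (i.e.\ any $(s,t)$) there is a path $\lambda\in v\Lambda w$, and taking $n=d(\lambda)$ gives $A^n(v,w)=\vert v\Lambda^n w\vert\ge 1>0$, which is exactly irreducibility; conversely, if $A^n(v,w)>0$ for some $n$, then $v\Lambda^n w\subseteq v\Lambda w$ is nonempty, so $\Lambda$ is strongly connected.

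First I would prove the counting identity by induction on $\vert n\vert=n_1+\cdots+n_k$. The base cases $\vert n\vert=0$ and $\vert n\vert=1$ hold because $A^0=I$ matches $\vert v\Lambda^0 w\vert=\delta_{v,w}$ (the only degree-$0$ morphisms being the vertices themselves), and $A^{e_i}=A_i$ matches the defining formula $A_i(v,w)=\vert v\Lambda^{e_i}w\vert$. For the inductive step, write $n=n'+e_i$ with $n_i\ge 1$, so that $A^n=A^{n'}A_i$ by commutativity; then
\[
A^n(v,w)=\sum_{u\in\Lambda^0}A^{n'}(v,u)\,A_i(u,w)=\sum_{u\in\Lambda^0}\vert v\Lambda^{n'}u\vert\cdot\vert u\Lambda^{e_i}w\vert,
\]
using the inductive hypothesis.

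The heart of the argument is to match this sum with $\vert v\Lambda^n w\vert$ via the factorization property. I would show that the map $\lambda\mapsto(\mu,\nu)$, where $\lambda=\mu\nu$ is the unique factorization with $d(\mu)=n'$ and $d(\nu)=e_i$, is a bijection from $v\Lambda^n w$ onto the disjoint union $\bigsqcup_{u\in\Lambda^0}\big(v\Lambda^{n'}u\times u\Lambda^{e_i}w\big)$: uniqueness of the factorization makes the map well defined and injective, while any composable pair $(\mu,\nu)$ with the prescribed degrees and intermediate vertex $u=s(\mu)=r(\nu)$ yields a path $\mu\nu\in v\Lambda^n w$, giving surjectivity. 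Counting both sides then produces $\vert v\Lambda^n w\vert=\sum_{u}\vert v\Lambda^{n'}u\vert\cdot\vert u\Lambda^{e_i}w\vert$, completing the induction and hence the proof.

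The main obstacle is really only bookkeeping: one must keep the range/source and composition conventions straight (the range of $\mu\nu$ is $r(\mu)$ and its source is $s(\nu)$) so that the intermediate vertex $u$ is correctly identified in the bijection, and one invokes finiteness of $\Lambda$ solely to guarantee that the path counts, and hence the matrix entries, are finite nonnegative integers. No deeper analytic input is needed, since the equivalence is a direct translation between the combinatorics of $\Lambda$ and the linear algebra of the $A_i$.
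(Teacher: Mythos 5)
Your argument is correct, but note that the paper itself contains no proof to compare it with: Proposition \ref{def:strongly-conn-char} is quoted verbatim from Lemma~4.1 of \cite{aHLRS3}. Your proof is the natural self-contained one, and it is essentially the argument underlying the cited lemma: the whole content is the counting identity $A_1^{n_1}\cdots A_k^{n_k}(v,w)=\vert v\Lambda^n w\vert$, which you prove correctly by induction on $\vert n\vert$, using commutativity of the $A_i$ and the bijection $\lambda\mapsto(\mu,\nu)$ coming from the unique factorization with $d(\mu)=n'$ and $d(\nu)=e_i$; the two implications then follow by direct translation, as you say, and your bookkeeping of the range/source conventions (so that the intermediate vertex $u=s(\mu)=r(\nu)$ matches the index of summation in the matrix product) is accurate. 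The only point you leave unaddressed is definitional: in Definition \ref{def:irreduc-family-def}, an irreducible family is required to consist of \emph{nonzero} commuting matrices, so to conclude from strong connectivity that $\mathcal{A}$ ``is an irreducible family'' one should also note that each $A_i\neq 0$. This follows from the fact, quoted in the paper from Lemma~2.1 of \cite{aHLRS3}, that a strongly connected finite $k$-graph has no sources, so $v\Lambda^{e_i}\neq\emptyset$ for every $v$ and every $i$, whence each $A_i$ has a nonzero entry. With that one-line addition, your proof is complete and matches the intended argument.
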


Also note that Definition \ref{def:irreduc-family-def} of an irreducible family  is the same as  the definition of irreducibility given  by Robertson and Steger (c.f. \cite{RS-Irr} page 94).

\subsection{Infinite path spaces and probability measures}
Let $\Lambda$ be a finite $k$-graph with no sources. To discuss the \textit{infinite path space} $\Lambda^\infty$, consider the set
\[
\Omega_k:=\{(p,q)\in\N^k\times\N^k:p\le q\}.
\]
We make $\Omega_k$ into a $k$-graph as follows.  Let $\Omega_k^0 = \N^k$, and define $r,s:\Omega_k\to \N^k$ by $r(p,q):=p$ and $s(p,q):=q$. We define composition by $(p,q)(q,m)=(p,m)$ and degree by $d(p,q)=q-p$. Then $\Omega_k$ is a $k$-graph with no sources. As in Definition~2.1 of \cite{KP1}, an \textit{infinite path} in a $k$-graph $\Lambda$ is a $k$-graph morphism $x:\Omega_k\to \Lambda$. We write $\Lambda^\infty$ for the collection of all infinite paths and call it the \textit{infinite path space} of $\Lambda$. For each $p\in \N^k$, we define $\sigma^p:\Lambda^\infty \to \Lambda^\infty$ by $\sigma^p(x)(m,n)=x(m+p,n+p)$ for $x\in\Lambda^\infty$. For $\lambda\in\Lambda$ we define $Z(\lambda)=\{x\in\Lambda^\infty:x(0,d(\lambda))=\lambda\}$ and we call it a \textit{cylinder set}. It is shown in \cite{KP1} that the cylinder sets $\{Z(\lambda)\}$ are a basis for the topology on $\Lambda^\infty$. Note that $\Lambda^\infty$ is compact if and only if $\Lambda^0$ is finite.

We say that a $k$-graph $\Lambda$ is \emph{aperiodic} if for each $v \in \Lambda^0$, there exists $x \in Z(v)$ such that for all
$m \not= n \in \N^k$, we have $\sigma^m(x) \not= \sigma^n(x)$. 

The following definition can be found originally in Proposition~8.1 of \cite{aHLRS3}.  \begin{defn}
\label{def:perron-frobenius}
 Let $\Lambda$ be a strongly connected finite $k$-graph with the vertex matrices $A_i$.  Define a measure $M$ on $\Lambda^\infty$ by
\begin{equation}\label{eq:measure}
M(Z(\lambda))=\rho(\Lambda)^{-d(\lambda)}x^\Lambda_{s(\lambda)}\quad\text{for all}\;\lambda\in\Lambda,
\end{equation}
where  $\rho(\Lambda)=(\rho(A_1),\dots,\rho(A_k))$ and $x^\Lambda$ is the  unimodular Perron-Frobenius eigenvector of $\Lambda$ of Definition
\ref{def:irreduc-family-perr-frob-def}.  We will call $M$ the \emph{Perron-Frobenius measure} on $\Lambda^\infty$.
\end{defn}

Proposition~8.1 of \cite{aHLRS3} establishes that $M$ is the unique Borel probability measure on $\Lambda^\infty$ that satisfies
\[
M(Z(\lambda))=\rho(\Lambda)^{-d(\lambda)}M(Z(s(\lambda))\quad\text{for all}\;\;\lambda\in\Lambda.
\]

\subsection{Cuntz-Krieger  $C^*$-algebras of $k$-graphs}
\begin{defn}
 Let $\Lambda$ be a finite $k$-graph with no sources. A \emph{Cuntz-Kriger $\Lambda$-family} is a collection $\{t_\lambda:\lambda\in\Lambda\}$ of partial isometries in a $C^*$-algebra $A$ such that
\begin{itemize}
\item[(CK1)] $\{t_v: v \in \Lambda\z\}$ is a family of mutually orthogonal projections,
\item[(CK2)] $t_\mu t_\lambda = t_{\mu \lambda}$ whenever $s(\mu) = r(\lambda)$,
\item[(CK3)] $t_\mu^* t_\mu = t_{s(\mu)}$ for all $\mu$, and
\item[(CK4)] for all $v\in\Lambda^0$ and $n\in\N^k$, we have
\[
t_v=\sum_{\lambda\in v\Lambda^n} t_\lambda t^*_\lambda.
\]
\end{itemize}
These relations imply that for all $\mu,\nu\in\Lambda$
\begin{equation}\label{eq:MCE}
t^*_\mu t_\nu =\sum_{(\eta,\zeta)\in\Lambda^{\min}(\mu,\nu)}t_\eta t^*_\zeta,
\end{equation}
where we interpret empty sums as zero.

The \emph{Cuntz-Krieger  $C^*$-algebra} $C^*(\Lambda)$ associated to  $\Lambda$ is generated by a universal Cuntz-Krieger $\Lambda$-family $\{s_\lambda:\lambda\in\Lambda\}$, and we can show that
\[
C^*(\Lambda)=\clsp\{s_\mu s^*_\nu:\mu,\nu\in\Lambda,s(\mu)=s(\nu)\}.
\]
The universal property gives a gauge action $\gamma$ of $\T^k$ on $C^*(\Lambda)$ such that $\gamma_z(s_\lambda)=z^{d(\lambda)}s_\lambda$, where $z^n=\prod_{i=1}^k z^{n_i}_i$ for $z=(z_1,\dots,z_k)\in \T^k$ and $n\in \Z^k$.
\end{defn}

\subsection{Dynamics and KMS states on $C^*(\Lambda)$}
\label{subsection:background_Dyn}

Suppose $(A,\alpha)$ is a dynamical system consisting of an action $\alpha$ of $\R$ on a $C^*$-algebra $A$. We say that $a\in A$ is \textit{analytic} for $\alpha$ if the function $t\mapsto \alpha_t(a)$ is the restriction to $\R$ of an analytic function $z\mapsto \alpha_z(a)$ defined on $\C$. A state $\phi$ on $A$ is a \textit{KMS state at the inverse temperature $\beta$} (or a \textit{KMS$_\beta$ state} of the system $(A,\alpha)$) if
\begin{equation}\label{eq:KMS_cond}
\phi(ab)=\phi(b\alpha_{i\beta}(a))
\end{equation}
for all analytic elements $a,b$. According to Proposition~8.12.3. of \cite{Ped}, it suffices to check the KMS condition on a set of analytic elements which span a dense subspace of $A$.

Let $\Lambda$ be a finite $k$-graph with no sources with the vertex matrices $A_i$.
Let $r\in (0,\infty)^k$ and define $\alpha^r:\R \to \Aut(C^*(\Lambda))$ in terms of the gauge action by $\alpha^r=\gamma_{e^{itr}}$. Then for $\mu\in\Lambda$, we have
\[
\alpha^r_t(t_\mu t^*_\nu)=e^{itr\cdot(d(\mu)-d(\nu))}t_\mu t^*_\nu
\]
is the restriction of the analytic function $z\mapsto e^{izr\cdot(d(\mu)-d(\nu))} t_\mu t^*_\nu$. Thus it suffices to check the KMS condition \eqref{eq:KMS_cond} on the elements $t_\mu t^*_\nu$.

We are particularly interested in the dynamics $\alpha^r$ with
\[
r=\ln \rho(\Lambda)=(\ln \rho(A_1),\dots,\ln \rho(A_k))\in (0,\infty)^k
\]
on $C^*(\Lambda)$; this dynamics is called the \textit{preferred dynamics}.

\subsection{Hausdorff measure and Hausdorff dimension}
Here, we review the definition of Hausdorff outer measure and Hausdorff dimension for subsets of $\mathbb R^n.$  More details and proofs of these facts can be found in the book by K. Falconer \cite{falconer}, or Chapter 6 of the book by G. Edgar \cite{edg}, for example.
\begin{defn}
Fix $n, s, \delta$ with $n\in\mathbb N,\;0\leq s<\infty,$ and $\delta>0.$  For each subset $E\subset \mathbb R^n,$ let
$$H^s_{\delta}(E)=\text{inf}\{\sum_{j\geq 1}[\text{diam}(A_j)]^s\},$$
where the infimum is taken over all countable collections of
subsets $\{A_j\}_{j\geq 1}$ of $\mathbb R^n$ such that
$$E\subset \cup_{j\geq 1}A_j\;\;\text{and}\;\;\text{diam}(A_j)<\delta,\forall j\geq 1.$$
This function $H^s_{\delta}$ is called the {\it Hausdorff outer measure} on $\mathbb R^n$ associated to $s$ and $\delta.$
\end{defn}
One sees that for $E\subset \mathbb R^n,\;s\geq 0$ fixed, and $0<\delta_1<\delta_2$ we have
$$H^s_{\delta_2}(E)\leq H^s_{\delta_1}(E)$$ since one is taking the infimum over a larger family of coverings for larger $\delta.$
\begin{defn}
For $0\leq s<\infty,$ and $E\subset \mathbb R^n,$ define
$$H^s(E)=\lim_{\delta\to 0+}H^s_{\delta}(E)=\text{sup}\{H^s_{\delta}(E):\delta>0\}.$$
One verifies the Carath\'eodory criterion, i.e. if $E_1$ and $E_2$ are subsets of $\mathbb R^n$ with
$$d(E_1,E_2)=\text{inf}\{\|x-y\|:\;x\in E_1,\;y\in E_2\}=\epsilon>0,$$
then $$H^s(E_1\cup E_2)\;=H^s(E_1)+H^s(E_2),$$
so that $H^s$ is also an outer measure on $\mathbb R^n,$ called the \emph{Hausdorff outer measure} associated to $s\geq 0.$
\end{defn}
\begin{thm}
\label{thm-Hausd-measure}
Fix $s\in [0,\infty).$ Define the family of measurable sets for
$H^s,$ denoted by ${\mathcal M}(H^s),$ by
$${\mathcal M}(H^s)\;=\;\{E\subset \mathbb R^n| H^s(A)=H^s(A\cap E)+H^s(A-E),\;\forall A\subset \mathbb R^n\}.$$ Then:
\begin{enumerate}
\item[(i)] The Borel sets in $\mathbb R^n,\;{\mathcal B}(\mathbb R^n),$ satisfy ${\mathcal B}(\mathbb R^n)\subset {\mathcal M}(H^s).$
\item[(ii)] If $E\in {\mathcal M}(H^s)$ and $H^s(E)<\infty,$ then there exists an $F_{\sigma}$-set $F\subset \mathbb R^n$ such that $F\subset E$ and $H^s(E-F)=0.$
\item[(iii)] For $E\in{\mathcal M}(H^s),$ there exists a $G_{\delta}$-set $G$ with $E\subset G$ and $H^s(E)=H^s(G).$
\item[(iii)] For all $t\in \mathbb R^n,$ and $E\in {\mathcal M}(H^s),$
$$H^s(E)=H^s(E+t).$$
\item[(iv)] If $d>0$ is a positive scalar, and $E\in {\mathcal M}(H^s),$  then $H^s(d\cdot E))=d^{sn}H^s(E).$
\end{enumerate}
\label{thm:hausd-meas-properties}
\end{thm}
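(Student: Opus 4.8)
The plan is to treat the five assertions according to their nature: the measurability statement (i) follows from the general theory of metric outer measures; the geometric identities (translation invariance and the scaling law) fall straight out of the definition of $H^s_\delta$, because diameter behaves well under isometries and dilations; and the two regularity statements (the $G_\delta$ outer and $F_\sigma$ inner approximations) are consequences of the way $H^s_\delta$ is assembled from coverings. Throughout I would use the fact, already recorded in the text, that $H^s$ satisfies the Carath\'eodory criterion, i.e. it is a \emph{metric} outer measure.

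For (i), the central tool is the classical theorem that every metric outer measure renders all Borel sets Carath\'eodory-measurable. Since $\mathcal{M}(H^s)$ is a $\sigma$-algebra by Carath\'eodory's construction, it suffices to show that every closed set $C$ lies in $\mathcal{M}(H^s)$; because $\mathcal{B}(\mathbb{R}^n)$ is generated by the closed sets, this yields $\mathcal{B}(\mathbb{R}^n)\subset\mathcal{M}(H^s)$. To verify the splitting inequality $H^s(A)\ge H^s(A\cap C)+H^s(A\setminus C)$ for an arbitrary test set $A$ (the reverse being subadditivity), I would introduce $A_j=\{x\in A\setminus C: d(x,C)\ge 1/j\}$. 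These increase to $A\setminus C$, and the annular pieces $A_{j+1}\setminus A_j$ are separated by a positive distance from $A\cap C$ and from one another, so the metric property applies. The main work, and what I expect to be the principal obstacle, is the \emph{increasing sets lemma}: that $\lim_j H^s(A_j)=H^s(A\setminus C)$. This requires controlling the tail $\sum_j H^s(A_{j+1}\setminus A_j)$, which one does by splitting the union into even- and odd-indexed blocks, applying metric additivity on finitely many well-separated pieces, and comparing against $H^s(A)<\infty$ (the case $H^s(A)=\infty$ being trivial). Granting the lemma, $H^s(A\cap C)+H^s(A\setminus C)=H^s(A\cap C)+\lim_j H^s(A_j)\le H^s(A)$, which closes the argument.

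Translation invariance and scaling are direct. A translation $x\mapsto x+t$ is an isometry, hence preserves diameters and carries $\delta$-coverings of $E$ bijectively to $\delta$-coverings of $E+t$; taking infima gives $H^s_\delta(E+t)=H^s_\delta(E)$, and letting $\delta\to 0$ yields $H^s(E+t)=H^s(E)$. For the dilation $x\mapsto d\,x$, each covering set has its diameter multiplied by $d$, so every summand $[\operatorname{diam}(A_j)]^s$ is scaled by $d^s$ while $(d\delta)$-coverings of $dE$ correspond to $\delta$-coverings of $E$; letting $\delta\to 0$ gives the scaling law, with the factor $d^s$ arising directly from the diameter exponent.

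Finally, for the $G_\delta$ outer regularity, given $E$ with $H^s(E)<\infty$ I would, for each $k$, choose a $(1/k)$-covering $\{A_j^{(k)}\}$ with $\sum_j[\operatorname{diam}(A_j^{(k)})]^s\le H^s_{1/k}(E)+1/k$, enlarge each $A_j^{(k)}$ to an open set $U_j^{(k)}$ of only slightly larger diameter, and set $G=\bigcap_k\bigcup_j U_j^{(k)}$; this is a $G_\delta$ set containing $E$ with $H^s(G)\le H^s(E)$, while monotonicity gives the reverse inequality (the case $H^s(E)=\infty$ is handled by $G=\mathbb{R}^n$). The $F_\sigma$ inner approximation (ii) then follows from Borel regularity: for a measurable $E$ of finite measure one applies the $G_\delta$ outer regularity to the measurable complement of $E$ inside a large ball to extract a closed subset, and exhausting $\mathbb{R}^n$ by balls produces an $F_\sigma$-set $F\subset E$ with $H^s(E\setminus F)=0$, using the finite additivity of $H^s$ on $\mathcal{M}(H^s)$.
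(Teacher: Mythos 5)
A preliminary remark: the paper contains no proof of this theorem at all — it is stated as background, with the reader referred to Falconer \cite{falconer} and Edgar \cite{edg} — so your proposal can only be measured against the standard textbook arguments, which is essentially what you have reproduced. Most of it is correct as sketched. Part (i) via the classical theorem on metric outer measures (closed sets suffice since $\mathcal{M}(H^s)$ is a $\sigma$-algebra; the sets $A_j=\{x\in A\setminus C: d(x,C)\ge 1/j\}$; the increasing-sets lemma controlled by even/odd blocks of the annuli $A_{j+1}\setminus A_j$) is exactly the standard route. Translation invariance and the $G_\delta$ outer approximation are also fine as written. One discrepancy to flag on the scaling law: your argument yields $H^s(d\cdot E)=d^{s}H^s(E)$, which is the correct standard statement; the exponent $d^{sn}$ printed in the theorem is evidently a typo (for $s=n$, $H^n$ is a constant multiple of Lebesgue measure, which scales by $d^n$, not $d^{n^2}$), so your proof establishes the right result but not the literal claim as stated.

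The genuine gap is in part (ii). You propose to obtain the $F_\sigma$ inner approximation by "applying the $G_\delta$ outer regularity to the measurable complement of $E$ inside a large ball" and extracting a closed subset. That complementation trick works for locally finite measures (Lebesgue measure, Radon measures), but it fails here: for $0\le s<n$ every ball $B$ has $H^s(B)=\infty$, hence $H^s(B\setminus E)=\infty$ whenever $H^s(E)<\infty$, and outer regularity applied to a set of infinite measure is vacuous — the approximating $G_\delta$ set $G\supset B\setminus E$ carries no information (one may as well take $G=\mathbb{R}^n$), and no closed subset of $E$ of nearly full measure can be extracted from it. Subtracting measures is meaningless when both are infinite; this is precisely why $H^s$ with $s<n$ is not a Radon measure, and why the textbook proofs do not proceed by complementation in a ball. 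The standard repair: first take a $G_\delta$ hull $G\supset E$ with $H^s(G)=H^s(E)<\infty$ (your outer regularity), so that $H^s(G\setminus E)=0$; then pass to the restricted outer measure $\nu(A):=H^s(A\cap E)$, which is a \emph{finite} Borel-regular outer measure for which all Borel sets are measurable; for such finite measures on a metric space one proves inner approximation of measurable sets by \emph{closed} sets via the "good class" argument (the collection of sets approximable from inside by closed sets and from outside by open sets is a $\sigma$-algebra containing the closed sets); applying this to $E$ gives closed $C_k\subset E$ with $\nu(E\setminus C_k)<1/k$, and $F=\bigcup_k C_k$ is the desired $F_\sigma$ set with $H^s(E\setminus F)=\nu(E\setminus F)=0$. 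Your appeal to "finite additivity of $H^s$ on $\mathcal{M}(H^s)$" shows the right instinct — finiteness is the crux — but the finiteness must be injected by restricting the measure to $E$, not by intersecting with a ball.
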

The following proposition is a consequence of the above theorem.
\begin{prop}
\label{2.4}
Fix $E\subset \mathbb R^n$ and define the outer measure $H^s$ for $s\in [0,\infty)$ as above. Then:
\begin{enumerate}
\item[(a)] $H^s(E)=0$ for all $s>n.$
\item[(b)] If $H^s(E)<\infty$ for some $s\in [0,\infty),$ then $H^t(E)=0$ for all $t>s.$
\item[(c)] If $H^s(E)>0$ for some $s>0,$ then $H^t(E)=\infty$ for all $t\in [0, s).$
\end{enumerate}
\end{prop}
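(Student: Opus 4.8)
The plan is to derive all three parts from a single scaling estimate comparing $H^t_\delta$ and $H^s_\delta$ when $t>s$, reserving the only genuinely geometric argument for part (a). I would first prove (b). Fix $t>s$ and assume $H^s(E)<\infty$. For any $\delta>0$ and any countable cover $\{A_j\}$ of $E$ with $\text{diam}(A_j)<\delta$, the termwise bound $[\text{diam}(A_j)]^t=[\text{diam}(A_j)]^{t-s}[\text{diam}(A_j)]^s\le \delta^{t-s}[\text{diam}(A_j)]^s$ holds. Summing over $j$ and passing to the infimum over all such covers yields $H^t_\delta(E)\le \delta^{t-s}\,H^s_\delta(E)$. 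Since $H^s(E)=\sup_{\delta>0}H^s_\delta(E)$, we have $H^s_\delta(E)\le H^s(E)<\infty$ for every $\delta$, so letting $\delta\to 0^+$ and using $t-s>0$ gives $\delta^{t-s}\to 0$, whence $H^t(E)=\lim_{\delta\to 0^+}H^t_\delta(E)=0$. Part (c) is then the contrapositive of (b): if $t\in[0,s)$ and we had $H^t(E)<\infty$, then applying (b) with the roles of $s$ and $t$ interchanged (legitimate since $s>t$) would force $H^s(E)=0$, contradicting $H^s(E)>0$; hence $H^t(E)=\infty$.

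For part (a), I would first reduce to bounded sets by writing $E=\bigcup_{R\ge 1}\bigl(E\cap[-R,R]^n\bigr)$ and invoking countable subadditivity of the outer measure $H^s$, so that it suffices to treat $E$ contained in a cube $[0,L]^n$. Partition that cube into $m^n$ subcubes of side $L/m$, each of diameter $L\sqrt{n}/m$; these form a cover of $E$ whose mesh tends to $0$ as $m\to\infty$, so for any prescribed $\delta$ one may take $m$ large enough that all diameters fall below $\delta$. The associated sum is $\sum_j[\text{diam}(A_j)]^s=m^n\,(L\sqrt{n}/m)^s=(L\sqrt{n})^s\,m^{\,n-s}$. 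When $s>n$ the exponent $n-s$ is negative, so $m^{\,n-s}\to 0$, giving $H^s_\delta(E)\to 0$ and therefore $H^s(E)=0$. (Equivalently, taking $s=n$ in the same covering yields the uniform bound $H^n(E)\le(L\sqrt{n})^n<\infty$ for bounded $E$, after which part (a) is an immediate consequence of (b).)

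The main obstacle is concentrated entirely in part (a): it is the only place where the ambient dimension $n$ enters, through the diameter $L\sqrt{n}/m$ of the subcubes and their count $m^n$, and where one must take care to cover a possibly unbounded $E$ via subadditivity. Parts (b) and (c) are purely formal once the scaling inequality $H^t_\delta(E)\le\delta^{t-s}H^s_\delta(E)$ is in hand; the only subtlety there is the observation $H^s_\delta(E)\le H^s(E)$ for all $\delta$, which follows directly from the monotonicity of $H^s_\delta$ in $\delta$ recorded before the statement.
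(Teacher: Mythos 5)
Your proof is correct and complete; note that the paper itself offers no proof of this proposition — it is stated as ``a consequence of the above theorem'' with details deferred to the books of Falconer \cite{falconer} and Edgar \cite{edg}, and in fact parts (a)--(c) do not follow formally from the properties listed in Theorem \ref{thm-Hausd-measure}; they require exactly the direct covering estimates you supply. Your argument is the standard one: the termwise bound $[\text{diam}(A_j)]^t\le \delta^{t-s}[\text{diam}(A_j)]^s$ gives the scaling inequality $H^t_\delta(E)\le\delta^{t-s}H^s_\delta(E)$, which yields (b) upon letting $\delta\to 0^+$ (using the monotonicity observation $H^s_\delta(E)\le H^s(E)$, which you correctly isolate as the one subtle point); (c) is indeed just the contrapositive of (b) with the roles of $s$ and $t$ exchanged; and for (a), the partition of $[0,L]^n$ into $m^n$ subcubes of diameter $L\sqrt{n}/m$ gives $H^s_\delta\bigl(E\cap[0,L]^n\bigr)\le (L\sqrt{n})^s\,m^{n-s}\to 0$ when $s>n$, with countable subadditivity of the outer measure $H^s$ handling unbounded $E$. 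Your parenthetical alternative — bounding $H^n$ of a bounded set by $(L\sqrt{n})^n<\infty$ and then invoking (b) — is also valid, and arguably cleaner since it makes (a) a corollary of (b). In short, your write-up fills in a proof the paper leaves to the literature, and does so correctly.
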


It follows from the above proposition that for fixed $E\subset \mathbb R^n,$ if there exists $s_0\in [0,\infty )$ such that
$$0<H^{s_0}(E)<\infty,$$
then, for all $t<s_0,\; H^t(E)=\infty,$ and for all $t>s_0,\;H^t(E)=0.$ We note by the proposition above if such a number $s_0$ exists for $E,$ we must have $s_0\leq n.$

\begin{defn}
Let $E\subset \mathbb R^n.$  We say that $E$ has {\it Hausdorff dimension} $s_0\in [0,\infty)$ if $E\in {\mathcal M}(H^{s_0})$ and
$$0<H^{s_0}(E)<\infty.$$
The measure $H^{s_0}$ restricted to subsets of $E$ that are contained in ${\mathcal M}(H^{s_0})$ is called the {\it Hausdorff measure} on $E.$
\end{defn}

\begin{example}
Fix $n,\;d\in\mathbb N,\;d\geq 2.$ Let $B\subseteq \{0,1,\cdots, d-1\}^n$ and for $i\in B$ define $\tau_i:[0,1]^n\to [0,1]^n$ by $\tau_i(x)=\frac{x+i}{d},$ where vectors are added component-wise.
It is well-known that there is a unique compact set ${\mathcal F}_{d,B}\subset [0,1]^n$ satisfying
$${\mathcal F}_{d,B}\;=\;\bigsqcup_{i\in B}\tau_i({\mathcal F}_{d,B}).$$
The Hausdorff dimension of ${\mathcal F}_{d,B}$ is $\frac{\log{|B|}}{\log{d}}.$  If $B=\{0,1,\cdots, d-1\}^n,$ the set ${\mathcal F}_{d,B}$ is just the unit cube $[0,1]^n$ with dimension $n.$
If $n=1,\;d=3,$ and $B=\{0,2\}\subseteq \{0,1,2\},$ this construction gives the standard Cantor set.

\end{example}

\section{$\Lambda$-semibranching function systems and  representations of $C^*(\Lambda)$}
In this section, we show how to construct a  representation of $C^*(\Lambda)$ out of a generalized semibranching function system, which we call a $\Lambda$-semibranching function system (see Definition \ref{def-lambda-brach-system}).  In particular, when $\Lambda$ is a strongly connected, aperiodic, finite $k$-graph, this construction enables us to represent $C^*(\Lambda)$ faithfully on $L^2(\Lambda^\infty, M)$ where $M$ is the Perron-Frobenius measure of Definition \ref{def:perron-frobenius}.

\subsection{$\Lambda$-semibranching function systems}
\label{sec:sbfs}

We begin by recalling from \cite{MP} the definition of a semibranching function system. See also  \cite{bezuglyi-jorgensen}.

\begin{defn}
\label{def-1-brach-system}\cite[Definition~2.1]{MP}\label{defn:sbfs}
Let $(X,\mu)$ be a measure space, and let $I$ be a finite index set such that $\vert I\vert=N$. Suppose that, for each $i \in I$, we have a measurable map $\sigma_i:D_i\to X$, for some measurable subsets $D_i\subset X$. The family $\{\sigma_i\}$ is a \emph{semibranching function system} if the following holds.
\begin{itemize}
\item[(a)] There exists a corresponding family $\{R_i\}_{i=1}^N$ of subsets of $X$ with the property that
\[
\mu(X\setminus \cup_i R_i)=0,\quad\quad\mu(R_i\cap R_j)=0\;\;\text{for $i\ne j$},
\]
where $R_i=\sigma_i(D_i)$.
\item[(b)] There is a Radon-Nikodym derivative
\[
\Phi_{\sigma_i}=\frac{d(\mu\circ\sigma_i)}{d\mu}
\]
with $\Phi_{\sigma_i}>0$, $\mu$-almost everywhere on $D_i$.
\end{itemize}
A measurable map $\sigma:X\to X$ is called a \emph{coding map} for the family $\{\sigma_i\}$ if $\sigma\circ\sigma_i(x)=x$ for all $x\in D_i$.
\end{defn}

The fact that $\sigma \circ \sigma_i = id$ save on a subset of $D_i$ of measure zero implies that we also have $\sigma_i \circ \sigma = id|_{R_i}$ off a set of measure 0.  To see this, let $y \in R_i$ be arbitrary, and write $y = \sigma_i(x)$ for some $x \in D_i$.  Then, unless $x$ is in the set $M$ of measure 0 on which $\sigma \circ \sigma_i \not= id$, we have $\sigma(y) = x$, and thus
\[\sigma_i \circ \sigma(y) = \sigma_i (x) = y.\]
Since $\mu(M) = 0$ and $\sigma_i$ is measurable, $\mu(\sigma_i(M)) = 0$, so the above equality holds for almost all $y \in R_i$.

In \cite{MP}, the authors show how a finite  directed graph associated to an irreducible $\{0,1 \}$-matrix  gives rise to a semibranching function system and thence to a representation of  the  Cuntz-Krieger  $C^*$-algebra associated to the directed graph.  Additionally, in \cite{bezuglyi-jorgensen} the authors consider monic representations of Cuntz-Krieger  $C^*$-algebras associated to semibranching function systems, and their equivalence classes. Our goal in this paper is to generalize some of these constructions to obtain representations of finite higher-rank graph Cuntz-Krieger $C^*$-algebras; to do this we need the following generalization of Definition \ref{defn:sbfs}.

\begin{defn}
\label{def-lambda-brach-system}
Let $\Lambda$ be a finite $k$-graph and let $(X, \mu)$ be a measure space.  A \emph{$\Lambda$-semibranching function system} on $(X, \mu)$ is a collection $\{D_\lambda\}_{\lambda \in \Lambda}$ of measurable subsets of $X$, together with a family of \emph{prefixing maps} $\{\tau_\lambda: D_\lambda \to X\}_{\lambda \in \Lambda}$, and a family of coding maps $\{\tau^m: X \to X\}_{m \in \N^k}$, such that
\begin{itemize}
\item[(a)] For each $m \in \N^k$, the family $\{\tau_\lambda: d(\lambda) = m\}$ is a semibranching function system, with coding map $\tau^m$.
\item[(b)] If $ v \in \Lambda^0$, then  $\tau_v = id$,  and $\mu(D_v) > 0$.
\item[(c)] Let $R_\lambda = \tau_\lambda D_\lambda$. For each $\lambda \in \Lambda, \nu \in s(\lambda)\Lambda$, we have $R_\nu \subseteq D_\lambda$ (up to a set of measure 0), and
\[\tau_{\lambda} \tau_\nu = \tau_{\lambda \nu}\text{ a.e.}\]
 (Note that this implies that up to a set of measure 0, $D_{\lambda \nu} = D_\nu$ whenever $s(\lambda) = r(\nu)$).
\item[(d)] The coding maps satisfy $\tau^m \circ \tau^n = \tau^{m+n}$ for any $m, n \in \N^k$.  (Note that this implies that the coding maps pairwise commute.)
\end{itemize}
\end{defn}

\begin{rmk}
Note that Condition (a) of Definition \ref{def-lambda-brach-system} implies that $\Lambda$ must be a finite $k$-graph, since the requirement that $\{\tau_\lambda: d(\lambda) = m\}$ forms a  semibranching function system implies that $|\Lambda^m|$ is finite, for each $m \in \N^k$.

We also observe that if $k=1$, Definition \ref{def-lambda-brach-system} is a stronger requirement than Definition \ref{def-1-brach-system}. In particular, if $(X, \mu)$ admits a semibranching function system where the index set $I$ corresponds to the edges of a directed graph $\Lambda$, there is no obvious way to define a prefixing map $\tau_v$ for a vertex $v$ of $\Lambda$; and if we instead use the vertices of $\Lambda$ as our index set $I$ for a semibranching function system, as in \cite{MP}, we need not have $\sigma_v = id$ for each vertex $v$ of $\Lambda$.
\end{rmk}

\begin{prop}
\label{prop-main-exampl}
Let $\Lambda$ be a strongly connected finite $k$-graph.  The measure space $( \Lambda^\infty, M)$, together  with the prefixing maps $\{\sigma_\lambda: Z(s(\lambda)) \to Z(\lambda)\}_{\lambda \in \Lambda}$ given by $\sigma_\lambda(x) = \lambda x$ and the coding maps $\{\sigma^m\}_{m \in \N^k}$ given by $
\sigma^m(x)=x(m,\infty)$, forms  a $\Lambda$-semibranching function system.
\label{prop:kgraph}
\end{prop}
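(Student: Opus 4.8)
The plan is to verify the four conditions (a)--(d) of Definition \ref{def-lambda-brach-system} in turn for the data $D_\lambda = Z(s(\lambda))$, $\tau_\lambda = \sigma_\lambda$ with $\sigma_\lambda(x) = \lambda x$, and $\tau^m = \sigma^m$ with $\sigma^m(x) = x(m,\infty)$. Each of these maps carries cylinder sets to cylinder sets (and $\sigma_\lambda$ is a bijection of $Z(s(\lambda))$ onto $Z(\lambda)$, with inverse $\sigma^{d(\lambda)}|_{Z(\lambda)}$), so they are all Borel measurable; the only real content lies in the Radon-Nikodym computation inside condition (a), while the remaining conditions reduce to elementary facts about composition of paths and the explicit formula \eqref{eq:measure} for $M$.

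First I would fix $m \in \N^k$ and show that $\{\sigma_\lambda : d(\lambda) = m\}$ is a semibranching function system with coding map $\sigma^m$. Writing $R_\lambda = \sigma_\lambda(D_\lambda) = Z(\lambda)$, I would observe that every infinite path $x$ has a unique initial segment $x(0,m) \in \Lambda^m$ (here using that a strongly connected $\Lambda$ has no sources, so that $\Lambda^m \neq \emptyset$ and each vertex emits paths of degree $m$); hence the sets $\{Z(\lambda) : d(\lambda) = m\}$ are pairwise disjoint and cover $\Lambda^\infty$, giving part (a) of Definition \ref{def-1-brach-system} with the exceptional sets actually empty. The coding identity $\sigma^m \circ \sigma_\lambda = \mathrm{id}$ on $D_\lambda$ is immediate, since shifting $\lambda x$ by $d(\lambda) = m$ returns $x$. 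For the Radon-Nikodym derivative I would compute the pulled-back measure $M \circ \sigma_\lambda$ on a cylinder $Z(\mu) \subseteq Z(s(\lambda))$ (that is, with $r(\mu) = s(\lambda)$), where $\sigma_\lambda(Z(\mu)) = Z(\lambda\mu)$, so that \eqref{eq:measure} gives
\[
M(\sigma_\lambda(Z(\mu))) = M(Z(\lambda\mu)) = \rho(\Lambda)^{-d(\lambda)-d(\mu)} x^\Lambda_{s(\mu)} = \rho(\Lambda)^{-d(\lambda)} M(Z(\mu)).
\]
Thus $M \circ \sigma_\lambda$ and $\rho(\Lambda)^{-d(\lambda)} M$ agree on all cylinder subsets of $Z(s(\lambda))$, whence (see the final paragraph) on all Borel subsets, and $\Phi_{\sigma_\lambda} = \rho(\Lambda)^{-d(\lambda)}$ is a strictly positive constant. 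This establishes condition (a).

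The remaining conditions are quick. Condition (b) holds because $\sigma_v(x) = vx = x$ and $M(D_v) = M(Z(v)) = x^\Lambda_v > 0$, as $x^\Lambda$ has strictly positive entries. For condition (c), if $r(\nu) = s(\lambda)$ then $Z(\nu) \subseteq Z(r(\nu)) = Z(s(\lambda)) = D_\lambda$, and associativity of composition in the category $\Lambda$ gives $\sigma_\lambda \sigma_\nu(x) = \lambda\nu x = \sigma_{\lambda\nu}(x)$; both identities hold exactly rather than merely almost everywhere. Condition (d) follows from $\sigma^m(\sigma^n(x))(p,q) = x(p+m+n,\, q+m+n) = \sigma^{m+n}(x)(p,q)$.

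The only genuine obstacle is the measure-theoretic step just used in condition (a): upgrading the equality $M \circ \sigma_\lambda = \rho(\Lambda)^{-d(\lambda)} M$ from cylinder sets to all Borel sets. I would handle this by noting that finite intersections of cylinder sets are finite disjoint unions of cylinder sets (via the $\Lambda^{\min}$ structure), so that the cylinders form a generating semiring on which the finite Borel measure $M$ is determined; alternatively one can invoke the uniqueness clause of Proposition~8.1 of \cite{aHLRS3} recorded after Definition \ref{def:perron-frobenius}. With this in hand, every clause of Definition \ref{def-lambda-brach-system} has been verified, completing the proof.
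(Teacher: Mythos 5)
Your proposal is correct and follows essentially the same route as the paper's proof: verify conditions (a)--(d) of Definition \ref{def-lambda-brach-system} directly, with the only substantive computation being $M(Z(\lambda\mu)) = \rho(\Lambda)^{-d(\lambda)}M(Z(\mu))$ on cylinder sets, yielding the constant Radon--Nikodym derivative $\Phi_{\sigma_\lambda} = \rho(\Lambda)^{-d(\lambda)} > 0$. The one point where you go beyond the paper is in explicitly justifying the passage from equality of $M\circ\sigma_\lambda$ and $\rho(\Lambda)^{-d(\lambda)}M$ on cylinder sets to equality on all Borel sets (via the generating $\pi$-system of cylinders, or the uniqueness clause of Proposition~8.1 of \cite{aHLRS3}); the paper's proof leaves this standard step implicit, so this is a refinement rather than a different argument.
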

\begin{proof}
We first note that if $m, n \in \N^k$ and $x \in \Lambda^\infty$, applying the factorization property to $x(0, m+n)$ tells us that $\sigma^m \circ \sigma^n = \sigma^{m+n}$.  Thus, Condition (d) of Definition \ref{def-lambda-brach-system} holds. To see Condition (b), note that if $v \in \Lambda^0$ then $\sigma_v = id|_{Z(v)}$; and $M(Z(v)) = x^\Lambda_v > 0$ for all $v \in \Lambda^0$. For (c), recall that if $v \in \Lambda^0$, then for any $n \in \N^k$, we have
\[Z(v) = \cup_{\lambda \in v\Lambda^n} Z(\lambda),\]
so $Z(\lambda) = R_\lambda \subseteq Z(v)$.  If $s(\nu) = r(\lambda) = v$, then $D_\nu = Z(v)$, so $R_\lambda \subseteq D_\nu$ as desired  whenever $s(\nu) = r(\lambda)$. Similarly, the factorization property implies that if $s(\nu) = r(\lambda)$ then $\sigma_\nu \circ \sigma_\lambda = \sigma_{\nu \lambda}$.

It only remains to check Condition (a), namely, that for each $m \in \N^k$, the family $\{\sigma_\lambda\}_{d(\lambda) = m}$ forms a semibranching function system on $(\Lambda^\infty, M)$.  Observe that  for fixed $m \in \N^k$, $\cup_{\lambda \in \Lambda^m} Z(\lambda) = \Lambda^\infty$: if $x \in \Lambda^\infty$ then $x \in Z(x(0, m)),$ and $x(0,m) = \lambda$ for a unique $\lambda \in \Lambda^m$.  Thus,
\[M\left(\Lambda^\infty \backslash \cup_{\lambda \in \Lambda^m} \sigma_\lambda(D_\lambda) \right) = M(\emptyset) = 0,\]
so Condition (a) of Definition \ref{defn:sbfs} is satisfied.

Since $\Lambda$ is finite, for any $m\in\N^k$ the set of paths of degree $m$ is finite; since $\Lambda$ is source-free the set is nonempty. Note that if $d(\lambda) = m$, then $\sigma^m \circ \sigma_\lambda = id|_{Z(s(\lambda))}$, so $\sigma^m$ is indeed a coding map for $\{\sigma_\lambda\}_{d(\lambda) = m}$.

Thus, it merely remains to check that the Radon-Nikodym derivative $\Phi_\lambda = \frac{d(M \circ \sigma^m)}{dM}$ is strictly positive almost everywhere on $D_\lambda = Z(s(\lambda))$.

Let $\lambda, \nu \in \Lambda$. We compute
\begin{align*}
M\circ\sigma_\lambda(Z(\nu))& =M(Z(\lambda\nu))=\delta_{s(\lambda), r(\nu)}\rho(\Lambda)^{-d(\lambda\nu)}x^\Lambda_{s(\nu)} \\
 \text{ and }
M(Z(\nu))& =\rho(\Lambda)^{-d(\nu)}x^\Lambda_{s(\nu)}.
\end{align*}
Thus,
\[
\frac{M\circ\sigma_\lambda(Z(\nu))}{M(Z(\nu))}=\delta_{s(\lambda), r(\nu)}\frac{\rho(\Lambda)^{-d(\lambda\nu)}x^\Lambda_{s(\nu)}}{\rho(\Lambda)^{-d(\nu)}x^\Lambda_{s(\nu)}}
=\delta_{s(\lambda), r(\nu)}\rho(\Lambda)^{-d(\lambda)}.
\]
Since this value is the same for any $\nu \in s(\lambda)\Lambda$, it follows that $\Phi_\lambda$ is constant on $D_\lambda$: for any $x \in Z(s(\lambda))$ we have
\[\Phi_\lambda(x) = \rho(\Lambda)^{-d(\lambda)} > 0\]
because
  $\rho(\Lambda) \in (0, \infty)^{k}$.  Thus, we have a $\Lambda$-semibranching function system on $\Lambda^\infty$ as claimed.
\end{proof}

\begin{thm}
\label{th-branching-sys-gives-repres} Let $\Lambda$ be a finite $k$-graph with no sources and suppose that we have a $\Lambda$-semibranching function system on a measure space $(X, \mu)$.  For each $\lambda \in \Lambda$, define $S_\lambda \in B(L^2(X, \mu))$ by
\[S_\lambda \xi(x) = \chi_{R_\lambda}(x)(\Phi_{\tau_\lambda}(\tau^{d(\lambda)}(x)))^{-1/2}\xi(\tau^{d(\lambda)}(x)).\]
Then the operators $\{S_\lambda\}_{\lambda \in \Lambda}$ generate a  representation of $C^*(\Lambda)$.  If $\Lambda$ is aperiodic then this representation is faithful.
\label{thm:gen_repn}
\end{thm}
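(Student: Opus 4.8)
The plan is to verify that $\{S_\lambda\}_{\lambda\in\Lambda}$ is a Cuntz-Krieger $\Lambda$-family, so that the universal property of $C^*(\Lambda)$ supplies a representation $\pi_S$ with $\pi_S(s_\lambda)=S_\lambda$; faithfulness in the aperiodic case will then follow from the Cuntz-Krieger uniqueness theorem. First I would confirm boundedness and compute the adjoint by a single change of variables: writing $n=d(\lambda)$ and substituting $y=\tau^{n}(x)$ on $R_\lambda$ (so $x=\tau_\lambda(y)$ and $d\mu(x)=\Phi_{\tau_\lambda}(y)\,d\mu(y)$, using that $\tau_\lambda$ and $\tau^{n}$ invert one another a.e.\ as in the remark following Definition \ref{defn:sbfs}) gives $\|S_\lambda\xi\|^2=\int_{D_\lambda}|\xi|^2\,d\mu\le\|\xi\|^2$, so each $S_\lambda$ is a contraction, and the same substitution in $\langle S_\lambda\xi,\eta\rangle$ identifies
\[S_\lambda^*\eta(x)=\chi_{D_\lambda}(x)\,\Phi_{\tau_\lambda}(x)^{1/2}\,\eta(\tau_\lambda(x)).\]

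With the adjoint in hand, the central computation is that both composites collapse to multiplication operators: using $\tau^{n}\circ\tau_\lambda=\mathrm{id}$ on $D_\lambda$ and $\tau_\lambda\circ\tau^{n}=\mathrm{id}$ on $R_\lambda$ a.e., the Radon-Nikodym factors cancel and I obtain that $S_\lambda^*S_\lambda$ is multiplication by $\chi_{D_\lambda}$ and $S_\lambda S_\lambda^*$ is multiplication by $\chi_{R_\lambda}$; in particular each $S_\lambda$ is a partial isometry. Relation (CK3) is then immediate once I observe that the parenthetical remark in Definition \ref{def-lambda-brach-system}(c), applied with prefix $\mu$ and suffix $\nu=s(\mu)$, yields $D_\mu=D_{s(\mu)}$, whence $S_\mu^*S_\mu=S_{s(\mu)}$. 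For (CK1), condition (b) gives $\tau_v=\mathrm{id}$ and hence $S_v$ is multiplication by $\chi_{D_v}$, a projection; mutual orthogonality for distinct vertices is precisely the disjointness clause of the semibranching system $\{\tau_v:d(v)=0\}$ in condition (a), since there $R_v=D_v$.

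The two remaining relations require the compatibility of the coding maps together with a chain rule. For (CK2) I would combine the cocycle identity
\[\Phi_{\tau_{\mu\lambda}}(y)=\Phi_{\tau_\mu}(\tau_\lambda(y))\,\Phi_{\tau_\lambda}(y),\]
derived from the change-of-variables formula applied to $\tau_{\mu\lambda}=\tau_\mu\circ\tau_\lambda$ (condition (c)), with the coding relation $\tau^{d(\lambda)}\circ\tau^{d(\mu)}=\tau^{d(\mu\lambda)}$ from condition (d) and the set identity $\chi_{R_\mu}(x)\,\chi_{R_\lambda}(\tau^{d(\mu)}(x))=\chi_{R_{\mu\lambda}}(x)$; matching the surviving factors on $R_{\mu\lambda}$ gives $S_\mu S_\lambda=S_{\mu\lambda}$. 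Relation (CK4) is the step I expect to be the main obstacle, since it demands the global set identity $\bigcup_{\lambda\in v\Lambda^{n}}R_\lambda=D_v$ modulo null sets rather than a pointwise cancellation. My plan is to first establish $R_\lambda\subseteq D_{r(\lambda)}$ a.e.\ by applying condition (c) with the range vertex $r(\lambda)$ in the role of the prefix, which gives $\bigcup_{\lambda\in v\Lambda^{n}}R_\lambda\subseteq D_v$; for the reverse inclusion I intersect the full cover $\bigcup_{\lambda\in\Lambda^{n}}R_\lambda=X$ from condition (a) with $D_v$ and discard the terms with $r(\lambda)\neq v$, each of which sits inside the null set $D_v\cap D_{r(\lambda)}$ by the orthogonality already used for (CK1). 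Since the $R_\lambda$ of fixed degree are a.e.\ disjoint, $\sum_{\lambda\in v\Lambda^{n}}S_\lambda S_\lambda^*$ is multiplication by $\sum_{\lambda\in v\Lambda^{n}}\chi_{R_\lambda}=\chi_{D_v}$, which is $S_v$.

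Once (CK1)--(CK4) hold, the universal property gives the representation $\pi_S$ on $L^2(X,\mu)$. For faithfulness when $\Lambda$ is aperiodic I would invoke the Cuntz-Krieger uniqueness theorem for aperiodic $k$-graphs, which guarantees that $\pi_S$ is injective as soon as $\pi_S(s_v)=S_v\neq 0$ for every $v\in\Lambda^0$; and $S_v$ is multiplication by $\chi_{D_v}$, a nonzero projection exactly because $\mu(D_v)>0$ by condition (b). The recurring technical burden---and the reason every identity is phrased ``a.e.''---is the careful bookkeeping of null sets when composing the partially defined maps $\tau_\lambda$ and $\tau^{m}$ and tracking their Radon-Nikodym derivatives, while the genuinely structural point is the covering argument underlying (CK4).
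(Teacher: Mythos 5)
Your proposal is correct and takes essentially the same route as the paper's proof: you compute the adjoint by the same change of variables, verify (CK1)--(CK4) using the same ingredients --- the chain-rule/cocycle identity for the Radon--Nikodym derivatives (the paper's Equation \eqref{eq:claim} in disguise), the set identity $R_{\mu\lambda}=\{x\in R_\mu:\tau^{d(\mu)}(x)\in R_\lambda\}$, and the identical cover-and-discard argument for (CK4) --- and then invoke the universal property plus the aperiodic Cuntz--Krieger uniqueness theorem with $S_v\neq 0$ guaranteed by $\mu(D_v)>0$. The only differences are organizational (e.g., you exhibit $S_\lambda^*S_\lambda$ and $S_\lambda S_\lambda^*$ as multiplication operators, where the paper checks $S_\lambda S_\lambda^*S_\lambda=S_\lambda$ directly), not mathematical.
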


\begin{proof}

We begin by computing $(S_\lambda)^*$:
\begin{align*}
\l (S_{\lambda})^*\xi, \zeta\r = \l \xi, S_{\lambda}\zeta\r
&= \int_{X} \xi(x) \overline{S_\lambda \zeta(x)} \, d\mu(x) \\
&= \int_{R_\lambda} \xi(x) \Phi_{\tau_\lambda}(\tau^{d(\lambda)}(x))^{-1/2} \overline{\zeta(\tau^{d(\lambda)}(x))} \, d\mu(x) \\
&= \int_{D_\lambda} \xi(\tau_\lambda y) \Phi_{\tau_\lambda}(y)^{1/2}\overline{\zeta(y)}\, d\mu(y).
\end{align*}
So the adjoint of $S_\lambda$ is given by
\[
(S_\lambda^*\xi)(x) = \chi_{D_\lambda}(x) (\Phi_{\tau_\lambda}(x))^{1/2} \xi(\tau_\lambda x).
\]
A straightforward computation, using the fact that $\tau_\lambda \circ \tau^{d(\lambda)} = id$ a.e., will show that $S_\lambda S_\lambda^* S_\lambda = S_\lambda$ as operators on $L^2(X, \mu)$.
Similarly, the hypothesis that $\tau_v = \tau^{d(v)} = id|_{D_v}$  for $v\in\Lambda^0$ implies that
\[S_v\xi(x) = \chi_{D_v}(x) \xi(x),\]
so $S_v$ is a projection for all $v \in \Lambda^0$.

The fact that the projections $S_v$ are mutually orthogonal follows from the hypothesis that $\{\tau_v: v \in \Lambda^0\}$ forms a $\Lambda$-semibranching function system, and consequently that $\mu(D_v \cap D_w) = 0$ if $w \not= v \in \Lambda^0$.  To be precise, fix $v, w\in\Lambda^0$; then compute
\[
S_v S_w \xi(x)=\begin{cases} (S_w\xi)(x)\;\;\text{if $x \in D_v$}\\ 0\quad\quad\quad\quad\text{otherwise}\end{cases}
=\begin{cases} \xi(x)\;\;\text{if $v=w$}\\ 0\quad\quad\text{otherwise}\end{cases}
\]
Thus if $v\ne w$, then $S_v S_w=0$. So $\{S_v: v \in \Lambda\z\}$ consists of mutually orthogonal projections, which gives (CK1).

To check (CK2), fix $\lambda,\nu\in\Lambda$ and compute
\[\begin{split}
S_\lambda S_\nu \xi(x) &= \begin{cases} (\Phi_{\tau_\lambda}(\tau^{d(\lambda)}(x)))^{-1/2} (S_\nu \xi)(\tau^{d(\lambda)}x) \quad\text{if $x\in R_\lambda$,}\\ 0\Quad\qqquad\text{otherwise}\end{cases}\\
&=\begin{cases} (\Phi_{\tau_\lambda}(\tau^{d(\lambda)}(x)))^{-1/2}(\Phi_{\tau_\nu}(\tau^{d(\nu)}(\tau^{d(\lambda)}(x))))^{-1/2}\xi(\tau^{d(\nu)+d(\lambda)}(x))\\
\Quad\quad\quad\text{if $x \in R_\lambda$, $\tau^{d(\lambda)}x \in R_\nu$,}\\
0\Quad\quad\;\;\text{otherwise}\end{cases}
\end{split}\]

Note that $\tau^{d(\lambda)}x \in D_\lambda = D_{s(\lambda)} = R_{s(\lambda)}$, and that $R_\nu \subseteq D_{r(\nu)} = R_{r(\nu)}$ by Condition (c) of Definition \ref{def-lambda-brach-system}.  Therefore, if $S_\lambda S_\nu \xi(x)$ is to be nonzero, we must have  $\tau^{d(\lambda)}x \in R_{s(\lambda)} \cap R_{r(\nu)}$, so this intersection is nonempty.  In order for $S_\lambda S_\nu \xi$ to be a nonzero element of $L^2(X, \mu)$, then, we must have $\mu(R_{s(\lambda)} \cap R_{r(\nu)}) \not= 0$.  Condition (a) of Definition \ref{def-1-brach-system} then tells us that we must have $s(\lambda) = r(\nu).$  Thus, as we check via the computations that follow that $S_\lambda S_\nu = S_{\lambda \nu}$, we will assume that  $s(\lambda) = r(\nu)$.

Note that if $x\in R_\lambda$, then $ x = \tau_\lambda y$ for some $y \in D_\lambda$, and if $\tau^{d(\lambda)}x = y \in R_\nu $, then $y = \tau_\nu (z)$ for some $z \in D_\nu$.  Since $s(\lambda) = r(\nu)$, we can now use Definition \ref{def-lambda-brach-system}(c) to conclude that
\[x = \tau_\lambda \tau_\nu(z) = \tau_{\lambda \nu} z.\]

We claim that if $x \in R_{\lambda \nu}$ we have:
\begin{equation}\label{eq:claim}
\Phi_{\tau_\lambda}(\tau^{d(\lambda)}(x))\Phi_{\tau_\nu}(\tau^{d(\nu)}(\tau^{d(\lambda)}(x)))
=\Phi_{\tau_{\lambda\nu}}(\tau^{d(\lambda\nu)}(x)).
\end{equation}
To see this, we compute that for a.e. $x \in R_\lambda$,
\[
\Phi_{\tau_\lambda}(\tau^{d(\lambda)}(x))=\frac{d(\mu\circ\tau_\lambda)(\tau^{d(\lambda)}(x))}{d\mu(\tau^{d(\lambda)}(x))}
=\frac{d\mu(x)}{d\mu(\tau^{d(\lambda)}(x))},
\]
since  $\tau_\lambda \tau^{d(\lambda)} = id|_{R_\lambda}$ a.e.
Similarly, if $\tau^{d(\lambda)}(x) \in R_\nu$,
\[\Phi_{\tau_\nu}(\tau^{d(\nu)}(\tau^{d(\lambda)}(x))) = \frac{d(\mu\circ \tau_\nu) (\tau^{d(\nu)}( \tau^{d(\lambda)}x))}{d\mu(\tau^{d(\nu)}(\tau^{d(\lambda)}(x)))}  = \frac{d\mu(\tau^{d(\lambda)}(x))}{d\mu(\tau^{d(\nu) + d(\lambda)}(x))}.\]
Thus, for a.e. $x$ such that $x \in R_\lambda, \tau^{d(\lambda)}(x) \in R_\nu$,
\begin{align*}
\Phi_{\tau_\lambda}(\tau^{d(\lambda)}(x))\Phi_{\tau_\nu}(\tau^{d(\nu)}(\tau^{d(\lambda)}(x)))
&= \frac{d\mu(x)}{d\mu(\tau^{d(\lambda)}(x))} \frac{d\mu(\tau^{d(\lambda)}(x))}{d\mu(\tau^{d(\nu) + d(\lambda)}(x))} \\
&=  \frac{d\mu(x)}{d\mu(\tau^{d(\nu) + d(\lambda)}(x))}\\
&= \frac{d\mu(\tau_{\lambda \nu} (\tau^{d(\lambda \nu)}(x))}{d\mu(\tau^{d(\lambda \nu) }(x))}\\
&=\Phi_{\tau_{\lambda\nu}}(\tau^{d(\lambda\nu)}(x))
\end{align*}
whenever $x \in R_{\lambda\nu} \subseteq R_\lambda$.  It remains to show that
\begin{equation}
\label{eq:R-lambda-nu}
R_{\lambda\nu} = \{x\in R_\lambda: \tau^{d(\lambda)}(x) \in R_\nu\}.
\end{equation}
If $x \in R_\lambda$ and $\tau^{d(\lambda)}(x) \in R_\nu$, we have $\tau^{d(\lambda)}(x) = \tau_\nu y$ for some $y \in D_\nu$, so for almost all such $x$ we have
\[y = \tau^{d(\nu)}\circ \tau^{d(\lambda)}x = \tau^{d(\lambda \nu)}(x) \Rightarrow \tau_{\lambda \nu}(y) = x.\]
In other words, $\{x \in R_\lambda: \tau^{d(\lambda)}(x) \in R_\nu\} \subseteq R_{\lambda \nu}$.

Now, suppose $x \in R_{\lambda \nu} \subseteq R_\lambda$, so $x = \tau_\lambda y = \tau_{\lambda \nu} z$.  Then Condition (d) of Definition \ref{def-lambda-brach-system} implies that
\begin{align*}
z = \tau^{d(\lambda \nu)}(x )&= \tau^{d(\nu) +  d(\lambda)}( x) = \tau^{d(\nu)} \circ \tau^{d(\lambda)}(x) = \tau^{d(\nu)}( y)
\end{align*}
 almost everywhere, so $\tau^{d(\nu)}(y) = z \in D_{\lambda \nu} = D_\nu$ for a.e. $y = \tau^{d(\lambda)}(x)$.  Consequently, for almost all $x \in R_{\lambda \nu} \subseteq R_\lambda,$ we have $\tau^{d(\lambda)}(x) \in R_\nu$, so Equation \eqref{eq:R-lambda-nu} (and thus also Equation \eqref{eq:claim})
is true up to a set of measure 0.

Then the claim implies
\begin{align*}
S_\lambda S_\nu \xi(x) &= \begin{cases} (\Phi_{\tau_{\lambda\nu}}(\tau^{d(\lambda\nu)}(x)))^{-1/2}\xi(\tau^{d(\lambda\nu)}(x))\quad\text{if $x \in R_{\lambda\nu}$}\\
0\Quad\text{otherwise}\end{cases}\\
&=S_{\lambda\nu}\xi(x).
\end{align*}
Thus (CK2) holds.

For (CK3), fix $\lambda\in\Lambda$. Then a straightforward computation implies that
\[
S_\lambda^* S_\lambda \xi(x) =\chi_{D_\lambda}(x)\xi(x)=(S_{s(\lambda)})\xi (x).
\]
This gives (CK3).

Finally, we check (CK4): fix $v\in\Lambda^0$ and $n\in\N^k$. Let $\lambda\in v\Lambda^n$. If $x \not \in R_v = D_v \supseteq R_\lambda$, then $S_\lambda S^*_\lambda \xi(x)=0=S_v\xi(x)$, and hence $(\sum_{\lambda\in v\Lambda^n}S_\lambda S^*_\lambda)\xi(x)=0=S_v\xi(x)$. So suppose that $x \in D_v$. We compute
\begin{align*}
 S_\lambda S_\lambda^* \xi(x) &= \begin{cases} (\Phi_{\tau_\lambda}(\tau^{d(\lambda)}(x)))^{-1/2} ( S_\lambda^* \xi)(\tau^{d(\lambda)}x)\quad \text{if $x \in R_\lambda$,}\\ 0\quad\quad\quad\text{otherwise}\end{cases} \\
 &= \begin{cases} \xi(x)\quad\text{if $x\in R_\lambda$}
 \\ 0\quad\quad\quad\text{otherwise}\end{cases}
\end{align*}
Thus $\sum_{\lambda\in v\Lambda^n}S_\lambda S^*_\lambda\xi(x)= \sum_{\lambda \in v\Lambda^n} \chi_{R_\lambda}(x) \xi(x).$
We claim that
\begin{equation}
\label{eq:ck4-claim}\sum_{\lambda \in v\Lambda^n} \chi_{R_\lambda} = \chi_{D_v}.
\end{equation}

To see this, recall that Condition (a) of Definition \ref{def-1-brach-system}
implies that $X = \cup_{d(\lambda) = n} R_\lambda$, so in particular, there exists a set $S \subseteq \Lambda^n$ such that $D_v \subseteq \cup_{\lambda \in S} R_\lambda$.
Suppose that $\lambda \in S$ but $r(\lambda) \not= v$.  If $x \in R_\lambda$, Condition (c) of Definition \ref{def-lambda-brach-system} tells us that $x \in D_{r(\lambda)}$, so the fact that the sets $\{D_w: w \in \Lambda^0\}$ form a $\Lambda$-semibranching function system with $D_w = R_w$ for all $w \in \Lambda^0$ implies that $\mu(D_v \cap D_{r(\lambda)} )=0$ unless $v = r(\lambda)$.  So, we lose nothing by removing from $S$ any $\lambda$ with  $r(\lambda) \not= v$, allowing us to assume that $S\subseteq v\Lambda^n$. Thus $D_v\subseteq \cup_{\lambda\in v\Lambda^n} R_\lambda$.

Condition (c) of Definition \ref{def-lambda-brach-system} now tells us that $R_\lambda \subseteq D_v \ \forall \ \lambda \in v\Lambda$. (Thus $\cup_{\lambda\in v\Lambda^n} R_\lambda\subseteq D_v$, and hence $D_v=\cup_{\lambda\in v\Lambda^n} R_\lambda$). Thus Equation \eqref{eq:ck4-claim} holds. 

It follows that $ \sum_{\lambda \in v\Lambda^n} S_\lambda S^*_\lambda = S_v$, so (CK4) holds.

Since $\{S_\lambda:\lambda\in\Lambda\}$ is a Cuntz-Krieger $\Lambda$-family on $L^2(X,\mu)$, the universal property of $C^*(\Lambda)$ gives a representation $\pi_S:C^*(\Lambda)\to B(L^2(\Lambda^\infty,\mu))$.  For any $v \in \Lambda^0$, we have $S_v\xi(x)=\chi_{D_v}(x)\xi(x)$, and hence $S_v$ is nonzero for each $v\in\Lambda^0$.  Thus, by Theorem 4.6 of \cite{KP}, $\pi_S$ is faithful whenever $\Lambda$ is aperiodic.
\end{proof}

\begin{cor}
Let $\Lambda$ be a strongly connected, aperiodic, finite $k$-graph.  Then we have a faithful representation of $C^*(\Lambda)$ on $L^2(\Lambda^\infty, M)$.
\end{cor}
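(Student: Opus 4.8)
The plan is to combine the two principal results established above, for this corollary is an immediate consequence of them. First I would invoke Proposition \ref{prop-main-exampl}, which asserts that for any strongly connected finite $k$-graph $\Lambda$, the measure space $(\Lambda^\infty, M)$---with $M$ the Perron-Frobenius measure of Definition \ref{def:perron-frobenius}, the prefixing maps $\sigma_\lambda(x) = \lambda x$ defined on $Z(s(\lambda))$, and the coding maps $\sigma^m(x) = x(m,\infty)$---constitutes a $\Lambda$-semibranching function system. Since $\Lambda$ is assumed strongly connected, this proposition applies verbatim; in its proof the relevant Radon-Nikodym derivatives are moreover computed explicitly as the positive constants $\Phi_\lambda \equiv \rho(\Lambda)^{-d(\lambda)}$ on $D_\lambda = Z(s(\lambda))$.

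Next I would feed this $\Lambda$-semibranching function system into Theorem \ref{th-branching-sys-gives-repres}. Before doing so, I would observe that the theorem's standing hypothesis---that $\Lambda$ be a finite $k$-graph with no sources---is automatically satisfied, since strong connectivity forces $\Lambda$ to have no sources (Lemma~2.1 of \cite{aHLRS3}, as noted after Definition \ref{def:strongly-conn}). The theorem then produces operators $S_\lambda \in B(L^2(\Lambda^\infty, M))$ given by $S_\lambda \xi(x) = \chi_{R_\lambda}(x)(\Phi_{\tau_\lambda}(\tau^{d(\lambda)}(x)))^{-1/2}\xi(\tau^{d(\lambda)}(x))$, which form a Cuntz-Krieger $\Lambda$-family and hence, by the universal property of $C^*(\Lambda)$, generate a representation $\pi_S : C^*(\Lambda) \to B(L^2(\Lambda^\infty, M))$.

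Finally, for faithfulness I would appeal directly to the last clause of Theorem \ref{th-branching-sys-gives-repres}: because $\Lambda$ is aperiodic, the representation $\pi_S$ is faithful. Concretely, this is where the invocation of Theorem~4.6 of \cite{KP} in the proof of Theorem \ref{th-branching-sys-gives-repres} does the work, using that each vertex projection $S_v$ acts as multiplication by the nonzero characteristic function $\chi_{Z(v)}$. Since every step is a direct application of an already-proven result, I anticipate no genuine obstacle; the only point requiring a moment's attention is checking that the two hypotheses needed---strong connectivity for Proposition \ref{prop-main-exampl} and aperiodicity for the faithfulness conclusion of Theorem \ref{th-branching-sys-gives-repres}---are both present in the statement, which they are.
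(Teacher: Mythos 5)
Your proof is correct and is exactly the argument the paper intends: the corollary is stated without a separate proof precisely because it follows immediately by combining Proposition \ref{prop-main-exampl} (which provides the $\Lambda$-semibranching function system on $(\Lambda^\infty, M)$) with Theorem \ref{th-branching-sys-gives-repres} (which converts it into a representation, faithful under aperiodicity). Your added check that strong connectivity supplies the no-sources hypothesis of Theorem \ref{th-branching-sys-gives-repres} is a correct and worthwhile detail.
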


\subsection{Examples of $\Lambda$-semibranching function systems}
\label{sec:example}
In this section, we strengthen slightly our working hypothesis that $\Lambda$ is a strongly connected finite $k$-graph.   To be precise, we also assume throughout this section that the vertex matrices $A_1,\ldots,A_k$ are $\{0,1\}$ matrices and that the product  $A:=A_1\cdots A_k$ is also a $\{0,1\}$ matrix.   Under these hypotheses, we can construct $\Lambda$-semibranching function systems (and hence, by Theorem \ref{thm:gen_repn},  representations of $C^*(\Lambda)$)
on $L^2(X)$ for several   fractal subspaces $X$ of $[0,1]^n$.  We describe two such constructions in this section.

The reason for the stronger hypotheses is to be able  to apply Theorem 2.17 of \cite{MP} to $A$.   Although the statement of Theorem 2.17 requires $A$ to be irreducible,  we observe that the proof   only requires $A$ to have  a unique unimodular eigenvector with positive entries.  Our  hypothesis that $\Lambda$ is strongly connected implies that the  Perron-Frobenius eigenvector of $\Lambda$ is the  unique Perron-Frobenius eigenvector for $A$, see Definitions  \ref{def:irreduc-family-def} and
\ref{def:irreduc-family-perr-frob-def}.

Moreover, many examples of strongly connected finite $k$-graphs also satisfy the  hypotheses of this section.  One such example (for $k=2$)
 comes from \cite{paskraebuweaver-family}.
\begin{example}
\label{ex-ledrappier} (Ledrappier's example; \cite{paskraebuweaver-periodic},
\cite{paskraebuweaver-family}) The skeleton of this 2-graph is given on page 1622 of \cite{paskraebuweaver-family}. The vertex matrices are
\[
A_1=\begin{pmatrix}
1&0 &0&1 \\1&0 &0&1 \\ 0&1 &1&0\\ 0&1 &1&0
\end{pmatrix},\ A_2=\begin{pmatrix}
1&0 &1&0 \\1&0 &1&0 \\ 0&1 &0&1\\ 0&1 &0&1
\end{pmatrix}
\]
Note also that
the Perron-Frobenius eigenvalues $\rho(A_1)$,   $\rho(A_2)$, and   $\rho(A_1A_2)$ respectively of $A_1, A_2,$ and $A_1A_2$ are given by
\[
\rho(A_1)= \rho(A_2) =2,\ \rho(A_1A_2)=4.
\]

\end{example}

Let $\Lambda$ be a $k$-graph satisfying our newly strengthened hypotheses.
For our first example of a fractal representation of $C^*(\Lambda)$, we will construct a $\Lambda$-semibranching function system on a Cantor-type fractal subspace $X$ of $[0,1]$ for any $k$-graph $\Lambda$ that satisfies  the conditions specified at the beginning of this section.  We begin by describing the construction of $X$ and an embedding of $\Lambda^\infty$ into $X$.

Observe that to  any infinite path $x \in \Lambda^\infty$, we can associate a unique  sequence of edges
\[f_{11}^x, f_{21}^x, \ldots, f_{k1}^x, f_{12}^x, \ldots, f_{k2}^x,  f^x_{13}, \ldots, \] such that
$ x = f^x_{11} f^x_{21} \cdots f^x_{k1} f^x_{12} \cdots  $
and $d(f^x_{ij}) = e_i$ for all $j$. In other words, the decomposition $x = f^x_{11} f^x_{21} \cdots f^x_{k1} f^x_{12} \cdots$ is a ``rainbow decomposition,'' with edges of color $i$ occurring in the $nk+i$th spot for each $n \in \N$.

By our hypothesis that the vertex matrices of $\Lambda$ are 0-1 matrices, we know that given any $v,w \in \Lambda^0$,  there is at most one edge of shape $i$ with source $v$ and range $w$.  This implies that the string of edges $f^x_{11} f^x_{21} \cdots f^x_{k1} f^x_{12} \cdots$ can be replaced by a unique string of vertices $v^x_{11}, v^x_{21}, \ldots, v^x_{k1}, v^x_{12}, \ldots$, where
\[A_i(v^x_{ij}, v^x_{(i+1)j}) = 1 \ \forall \ j, \ \forall \ 1 \leq i \leq k-1 \text{ and } A_k(v^x_{kj}, v^x_{1(j+1)}) = 1 \ \forall \ j.\]
 Write $(v^x)$ for this sequence of vertices.

Write $N = |\Lambda^0|$.  Henceforth we will assume we have chosen a labeling of $\Lambda^0$ by the integers $0, 1, \ldots, N-1$ and we will often confuse a vertex with its label.

With this notation, we now define a map $\Psi: \Lambda^\infty \to [0,1]$, where we write points in $[0,1]$ in their $N$-adic expansions:
\[\Psi(x) = 0. v^x_{11} v^x_{21} \ldots v^x_{k1} v^x_{12} v^x_{22} \ldots = \sum_{1 \leq i \leq k , j \in \N} \frac{v^x_{ij}}{N^{i+k(j-1)}}.\]
The fact that each path $x \in \Lambda^\infty$ is associated to a unique string of vertices $v^x_{11}, v^x_{21}, \ldots, v^x_{k1}, v^x_{12}, \ldots$ implies that $\Psi$ is injective; the image of $\Psi$ in $[0,1]$ is a fractal-type subspace $X$.

Since $\Psi: \Lambda^\infty \to [0,1]$ is injective, we can use $\Psi$ to transfer the prefixing and coding maps from Proposition \ref{prop:kgraph} to $X = \Psi(\Lambda^\infty)$, obtaining a new family of prefixing and coding maps
\[\tau^m := \Psi \circ \sigma^m \circ \Psi^{-1}, \quad \tau_\lambda := \Psi \circ \sigma_\lambda \circ \Psi^{-1}\]
on $X$.  If we further define a measure $\mu$ on $X$ by $\mu(E) = M(\Psi^{-1}(E)),$
then the maps $\{\tau^m, \tau_\lambda\}$ form a $\Lambda$-semibranching function system on $(X, \mu)$.   It follows then from Theorem \ref{th-branching-sys-gives-repres} that we have a representation of $C^*(\Lambda)$ on $L^2(X, \mu)$.

The following is a generalization of Theorem 2.17 of \cite{MP}.

\begin{thm}
\label{thm-RS-Perron-Frobenius}
Let $\Lambda$ be a  strongly connected finite $k$-graph with $\{0,1\}$ vertex matrices $A_1,\dots, A_k$, such that the product $A := A_1 \cdots A_k$ is also a $\{0,1\}$ matrix.
Let $X  = \Psi(\Lambda^\infty)$ and let $\mu$ be the measure on $X$ given by $\mu(E) = M(\Psi^{-1}(E))$, where $M$ is the measure on $\Lambda^\infty$ given by Equation \ref{eq:measure}.  Then $\mu = \frac{1}{H^s(X)}H^s$, where $H^s$ is the Hausdorff measure on $X$ associated to its Hausdorff dimension $s$. Moreover
\[
 N^{k s } =\rho(A), \  s = \frac1k \frac{\ln \rho(A) }{\ln N }
\]
where $N= |\Lambda^0|$ and $\rho(A)$ denotes the Perron-Frobenius eigenvalue  of $A$.
\label{prop:M=Hausd}
\end{thm}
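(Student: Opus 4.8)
The plan is to exhibit $X$ as a graph-directed self-similar subset of $[0,1]$ whose incidence data is exactly the matrix $A$, and then to run the mass-distribution argument underlying Theorem 2.17 of \cite{MP} in this setting. The organizing observation is that \emph{balanced} paths govern the geometry: a path $\lambda$ of degree $(n,\dots,n)$ consists of $n$ complete ``rainbow blocks'' and occupies the first $nk$ digits of the $N$-adic expansion, so $\Psi(Z(\lambda)) = X \cap I_\lambda$ for an $N$-adic interval $I_\lambda$ of length $N^{-nk}$. Because each $A_i$ is a $\{0,1\}$ matrix, a block is a vertex string with prescribed colored edges, and because $A = A_1\cdots A_k$ is also $\{0,1\}$ there is at most one block between any ordered pair of endpoint vertices $(v,w)$ (precisely when $A(v,w)=1$). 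Hence a block is determined by its endpoints, and writing $X_v = \Psi(Z(v))$ one obtains the decomposition $X_v = \bigsqcup_{w:\,A(v,w)=1} \tau_{b(v,w)}(X_w)$, where each $\tau_{b(v,w)} = \Psi\circ\sigma_{b(v,w)}\circ\Psi^{-1}$ is the affine contraction $y \mapsto N^{-k}y + c$ of ratio $N^{-k}$; the images land in disjoint $N$-adic subintervals, so the open set condition holds. I would first record that $\rho(A) = \rho(A_1)\cdots\rho(A_k)$ (the common eigenvector $x^\Lambda$ satisfies $A x^\Lambda = \prod_i\rho(A_i)\,x^\Lambda$), so that for balanced $\lambda$ we have $\mu(\Psi(Z(\lambda))) = M(Z(\lambda)) = \rho(\Lambda)^{-d(\lambda)}x^\Lambda_{s(\lambda)} = \rho(A)^{-n}x^\Lambda_{s(\lambda)}$, and so that $N^{ks}=\rho(A)$ is exactly equivalent to $s = \frac1k\frac{\ln\rho(A)}{\ln N}$.

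With $s$ defined by $N^{ks}=\rho(A)$, the next step is to show $0 < H^s(X) < \infty$, identifying $s$ as the Hausdorff dimension of $X$. For the upper bound I would cover $X$ by the level-$n$ balanced cylinders $\{\Psi(Z(\lambda)): d(\lambda)=(n,\dots,n)\}$, each of diameter at most $N^{-nk}$ with $(N^{-nk})^s = \rho(A)^{-n}$; since $\sum_\lambda M(Z(\lambda)) = 1$ and $M(Z(\lambda)) \ge \rho(A)^{-n}\min_v x^\Lambda_v$, the number of such $\lambda$ is at most $\rho(A)^n/\min_v x^\Lambda_v$, whence $\sum_\lambda (\operatorname{diam})^s \le (\min_v x^\Lambda_v)^{-1}$ uniformly in $n$ and $H^s(X) < \infty$. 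For the lower bound I would invoke the mass distribution principle with $\mu$: given $U$ of small diameter $r$, choose $n$ with $N^{-(n+1)k} \le r < N^{-nk}$; then $U$ meets at most two $N$-adic intervals of length $N^{-nk}$, so $\mu(U) \le 2\max_v x^\Lambda_v\,\rho(A)^{-n} \le 2\rho(A)\max_v x^\Lambda_v\, r^s$, giving $H^s(X) \ge \mu(X)/C = 1/C > 0$. This is the step that directly generalizes Theorem 2.17 of \cite{MP}, now with contraction ratio $N^{-k}$ in place of $N^{-1}$.

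Finally I would upgrade the comparison $\mu \asymp H^s$ to the exact identity. Set $\nu = H^s(X)^{-1}(H^s \circ \Psi)$, a Borel probability measure on $\Lambda^\infty$. Applying the scaling law $H^s(dE) = d^s H^s(E)$ and translation invariance (Theorem \ref{thm:hausd-meas-properties}) to the affine maps $\tau_\lambda$ for balanced $\lambda$ gives $\nu(Z(\lambda)) = \rho(\Lambda)^{-d(\lambda)}\nu(Z(s(\lambda)))$. Moreover, applying $H^s$ to the decomposition $X_v = \bigsqcup_{w:\,A(v,w)=1}\tau_{b(v,w)}(X_w)$ (whose overlaps are $N$-adic boundary points, hence $H^s$-null for $s>0$) shows that the vector $h = (H^s(X_v))_v$ satisfies $Ah = \rho(A)h$; by uniqueness of the positive Perron--Frobenius eigenvector, $h$ is a multiple of $x^\Lambda$, and normalizing via $\sum_v x^\Lambda_v = 1$ yields $\nu(Z(v)) = x^\Lambda_v = M(Z(v))$. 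Thus $\nu$ satisfies the scaling relation $\nu(Z(\lambda)) = \rho(\Lambda)^{-d(\lambda)}\nu(Z(s(\lambda)))$ that, by Proposition 8.1 of \cite{aHLRS3}, uniquely characterizes $M$, so $\nu = M$ and therefore $\mu = H^s(X)^{-1}H^s$.

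I expect the main obstacle to be the bookkeeping forced by the fact that $\Psi(Z(\lambda))$ is an $N$-adic interval only for balanced $\lambda$; for general $\lambda$ the image is a union over the ``unfilled'' within-block digits and is not self-affine in a clean way. I would therefore phrase every geometric estimate in terms of balanced cylinders and then argue that these generate the Borel $\sigma$-algebra (each $Z(\mu)$ is a finite disjoint union of balanced cylinders), so that the uniqueness in \cite{aHLRS3} applies. A related technical point to handle carefully is the non-uniqueness of $N$-adic expansions and the boundary overlaps in the open set condition: these form a countable set and must be shown $H^s$-null, so that the disjoint-union computation for $h$ and the normalization $\sum_v x^\Lambda_v = 1$ are exact rather than merely approximate.
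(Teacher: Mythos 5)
Your proposal is correct, but it takes a genuinely different route from the paper's. The paper never computes $s$ up front: it first shows only that the Hausdorff dimension of $X$ lies in $(0,1]$, by comparing $X$ with the Marcolli--Paolucci fractal $\Lambda_A$ via the $1$-H\"older ``decimation'' map $\phi(0.x_{11}x_{21}\cdots)=0.x_{11}x_{12}\cdots$ of Lemma \ref{lem:holder-cts} (leaning on Theorem 2.17 of \cite{MP} and on \cite{Shah} for positivity of the dimension, and again on Theorem 2.17(2) of \cite{MP} for positivity of the cylinder masses $H^s(\Psi(Z(v)))$); the value $N^{ks}=\rho(A)$ is then extracted \emph{a posteriori} from the eigenvector identity satisfied by $(H^s(\Psi(Z(v))))_{v}$, and finally the scaling relation $H^s(\Psi(Z(\lambda)))=\rho(\Lambda)^{-d(\lambda)}H^s(\Psi(Z(s(\lambda))))$ is verified for \emph{arbitrary} $\lambda$ by an explicit matrix-product computation over the set $C_\lambda$ of balancing extensions. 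You instead define $s$ by $N^{ks}=\rho(A)$ at the outset and prove $0<H^s(X)<\infty$ by hand (covering by balanced cylinders for the upper bound, mass distribution principle for the lower bound), which makes the argument self-contained --- no appeal to \cite{MP} or to the H\"older comparison --- and you avoid the paper's heaviest computation by checking the scaling identity only on balanced cylinders and then using the fact (this is exactly Lemma \ref{lem:squares-span} of the paper) that every $Z(\mu)$ is a finite disjoint union of balanced cylinders, so agreement there already forces $\nu=M$.

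Two points need to be made explicit for your argument to close. First, an off-by-one: for balanced $\lambda$ of degree $(n,\dots,n)$ the vertex string has $nk+1$ entries, so $\Psi(Z(\lambda))$ is $X$ intersected with an $N$-adic interval of length $N^{-(nk+1)}$; for an $N$-adic interval $I$ of length $N^{-nk}$ one only has $X\cap I=\bigsqcup_{w}\Psi(Z(\lambda_w))$, the union running over admissible terminal vertices $w$. This does not harm either estimate, but the constant in your mass distribution bound becomes $2\rho(A)$ (using $\sum_v x^\Lambda_v=1$) rather than $2\rho(A)\max_v x^\Lambda_v$. Second, your appeal to ``uniqueness of the positive Perron--Frobenius eigenvector'' to conclude $h\propto x^\Lambda$ requires $h=(H^s(X_v))_v$ to be entrywise \emph{positive}: the product matrix $A$ is not assumed irreducible (only the family $\{A_1,\dots,A_k\}$ is), so a nonnegative nonzero eigenvector need not automatically be a multiple of $x^\Lambda$. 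Fortunately your own mass distribution estimate is local and supplies positivity for free, since it gives $H^s(E)\geq \mu(E)/C$ for every Borel $E\subseteq X$, hence $H^s(X_v)\geq x^\Lambda_v/C>0$ for every $v$. With these two repairs your proof is complete, and it is arguably cleaner than the paper's, which obtains the positivity of $h$ through a rather delicate identification of the cylinder masses of $X$ with those of $\phi(X)$.
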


\begin{rmk}
\label{rem-RS-Perron-Frobenius}
 Theorem \ref{thm-RS-Perron-Frobenius} can be rephrased, equivalently, by saying that the two Hilbert spaces $L^2(\Lambda^\infty, M)$  and $L^2(X, \frac{1}{H^s(X)} H^s)$ are isometric  when the hypotheses of the theorem are satisfied.
\end{rmk}

The proof of Theorem \ref{prop:M=Hausd}
requires a number of preliminary steps.  We begin by showing that $X$ has finite and nonzero Hausdorff measure.

\begin{prop}
Let $\Lambda$ be a  strongly connected finite $k$-graph with $\{0,1\}$ vertex matrices $A_1,...,A_k$, such that $A_1...A_k$ is also a $\{0,1\}$ matrix.
 Write $N=|\Lambda^0|$ and write points $x \in [0,1]$ in their $N$-adic expansions:
 \[ x = 0.x_{11} x_{21} \ldots x_{k1} x_{12} x_{22} \ldots x_{k2} x_{13} \ldots.
 \]  Let
\[ X = \{ x \in [0,1]: \forall \ j, \ \forall \ i < k, A_i(x_{ij}, x_{(i+1)j}) = 1 \text{ and } A_k(x_{kj}, x_{1(j+1)}) = 1\}.\]
Then $X$ has finite, positive Hausdorff dimension.
\label{prop:fin-Hausd-dim}
\end{prop}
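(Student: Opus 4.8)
The plan is to exhibit a single exponent $s_0 \in (0,\infty)$ for which $0 < H^{s_0}(X) < \infty$; by the definition of Hausdorff dimension together with Proposition \ref{2.4}, this at once shows that $X$ has finite, positive Hausdorff dimension. The natural candidate is $s_0 = \frac{\ln \rho(A)}{k \ln N}$, where $A = A_1 \cdots A_k$; note that $s_0 < \infty$ unconditionally, and that $s_0 > 0$ precisely when $\rho(A) > 1$ (the nondegenerate case — if the $A_i$ are simultaneously permutation matrices then $X$ is finite and $s_0 = 0$, so I will assume $\rho(A) > 1$ to get positivity). First I would record that $X = \Psi(\Lambda^\infty)$ is the continuous image of the compact space $\Lambda^\infty$, hence compact and Borel, so $X \in \mathcal{M}(H^{s_0})$ by Theorem \ref{thm-Hausd-measure}(i); only the two measure estimates then remain. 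Throughout I would exploit the natural ``level-$n$'' covering of $X$ by the $N$-adic intervals obtained by fixing the first $nk$ digits, i.e. by prescribing a valid rainbow path of degree $(n, \ldots, n)$.

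For the upper bound I would count these level-$n$ cylinders. Fixing $nk$ admissible digits amounts to choosing a string whose transitions are governed cyclically by $A_1, \ldots, A_k$, so the number of such prefixes is $\mathbf{1}^T (A_1 \cdots A_k)^{n-1} A_1 \cdots A_{k-1} \mathbf{1}$, which is comparable to $\rho(A)^n$. The key point, and the reason strong connectivity is used, is that the positive Perron-Frobenius eigenvector satisfies $A x^\Lambda = \rho(A) x^\Lambda$ with all entries bounded between positive constants; comparing $\mathbf{1}$ with $x^\Lambda$ then yields $\mathbf{1}^T A^{n} \mathbf{1} \leq C \rho(A)^n$ with no spurious polynomial factor. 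Each prefix determines an $N$-adic interval of diameter $N^{-nk}$ covering $X$, and since $(N^{-nk})^{s_0} = \rho(A)^{-n}$ by the choice of $s_0$, I obtain $H^{s_0}_{N^{-nk}}(X) \leq C \rho(A)^n \cdot \rho(A)^{-n} = C$ uniformly in $n$; letting $n \to \infty$ gives $H^{s_0}(X) \leq C < \infty$.

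For the lower bound I would invoke the mass distribution principle \cite{falconer}, using the probability measure $\mu = \Psi_* M$ already available on $X$, for which $\mu(X) = M(\Lambda^\infty) = 1$. The essential estimate is $\mu(I) \leq C(\text{diam } I)^{s_0}$ for every sufficiently short interval $I$. An $N$-adic interval $J$ obtained by fixing the first $m$ digits corresponds under $\Psi$ to a single cylinder $Z(\lambda)$, so $\mu(J) = M(Z(\lambda)) = \rho(\Lambda)^{-d(\lambda)} x^\Lambda_{s(\lambda)}$; writing $m = nk + r$ with $0 \le r < k$, the degree $d(\lambda)$ has entries $n$ or $n+1$, whence $\rho(\Lambda)^{-d(\lambda)}$ is comparable to $\rho(A)^{-n}$ and $x^\Lambda_{s(\lambda)}$ contributes only a bounded factor. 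Since an arbitrary short interval meets at most two adjacent $N$-adic intervals of comparable length, these estimates combine to give the required Frostman bound, and hence $H^{s_0}(X) \geq \mu(X)/C = 1/C > 0$.

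The main obstacle is the lower bound, and specifically making the Frostman estimate uniform over all intervals rather than only over block-aligned ($m$ divisible by $k$) $N$-adic intervals. One must simultaneously control the mismatch between an arbitrary interval length and the discrete scale $N^{-m}$, the partial blocks (digit counts not divisible by $k$, which produce degree vectors not of the form $(n, \ldots, n)$), and the bounded but genuinely varying fluctuation coming from the eigenvector entries $x^\Lambda_{s(\lambda)}$. Keeping all of these factors uniformly bounded is exactly what forces the use of the positive Perron-Frobenius eigenvector in place of crude norm estimates, and it is the same mechanism underlying Theorem 2.17 of \cite{MP}, of which this proposition is the $k$-graph analogue.
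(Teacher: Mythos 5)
Your proposal is correct (granted the nondegeneracy caveat you flag) and takes a genuinely different route from the paper's. The paper obtains positivity by projection: Lemma \ref{lem-holder} shows that the color-1 digit map $\phi(x)=0.x_{11}x_{12}x_{13}\cdots$ is Lipschitz (``1-H\"older''), the image $\phi(X)$ is precisely the fractal $\Lambda_A$ of \cite{MP} because consecutive color-1 digits are $A$-admissible, so Theorem 2.17(3) of \cite{MP} gives $\phi(X)$ positive finite dimension, and Proposition 2.2 of \cite{Shah} (Lipschitz maps do not increase dimension) yields $\dim X \ge \dim \phi(X) > 0$; finiteness is then a crude covering estimate giving $\dim X \le 1$. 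You instead work directly at the critical exponent $s_0=\frac{\ln\rho(A)}{k\ln N}$: an upper bound by counting admissible $nk$-digit prefixes, with the positive Perron--Frobenius eigenvector supplying $\mathbf{1}^{T}A^{n-1}A_1\cdots A_{k-1}\mathbf{1}\le C\rho(A)^{n}$ free of polynomial corrections, and a lower bound by the mass distribution principle applied to $\mu = M\circ\Psi^{-1}$. Your estimates check out, including the treatment of partial blocks (which uses $\rho(A_i)\ge 1$ for every $i$, valid since $\Lambda$ has no sources) and the standard reduction from arbitrary intervals to at most two adjacent $N$-adic ones. As for what each approach buys: the paper's argument is shorter because the hard step is outsourced to \cite{MP}, but it yields only $0<\dim X\le 1$; your argument is self-contained, produces the exact value of the dimension together with the relation $N^{ks_0}=\rho(A)$ (anticipating Theorem \ref{prop:M=Hausd}), and---importantly---it proves $0<H^{s_0}(X)<\infty$ at the critical exponent, which is what the paper's own definition of Hausdorff dimension demands and what the normalization $\frac{1}{H^{s}(X)}H^{s}$ in Theorem \ref{prop:M=Hausd} presupposes; a two-sided bound on the critical exponent alone would leave open the possibility $H^{s}(X)\in\{0,\infty\}$ there. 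Finally, your exclusion of $\rho(A)=1$ is a feature rather than a defect: under the standing hypotheses, $\rho(A)=1$ forces every $A_i$ to be a permutation matrix, in which case $X$ is a finite set of Hausdorff dimension $0$ and the statement as written fails; the paper inherits the same implicit restriction through its appeal to \cite{MP}.
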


To prove Proposition \ref{prop:fin-Hausd-dim}, we will need the following Lemma.

\begin{lemma}
\label{lem-holder}
Let $X$ be as in Proposition \ref{prop:fin-Hausd-dim} and define $\phi: X \to [0,1]$ by
\[\phi(x) = \phi(0.x_{11} x_{21} \ldots x_{k1} x_{12} \ldots x_{k2} x_{13} \ldots ) = 0.x_{11} x_{12} x_{13} \cdots.\]
  Then $\phi$ is 1-H\"older continuous.
\label{lem:holder-cts}
\end{lemma}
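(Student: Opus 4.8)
The plan is to prove the quantitative estimate
\[
|\phi(x)-\phi(y)|\le C\,|x-y|^{1/k}\qquad\text{for all }x,y\in X,
\]
where $C$ depends only on $N$ and $k$ (so I read the stated ``$1$-H\"older'' as H\"older continuity of the exponent $1/k$ that is forced by the rainbow decomposition; when $k=1$ the map is the identity and this is genuine Lipschitz continuity). Everything rests on translating Euclidean proximity in $[0,1]$ into agreement of $N$-adic digits and back. Writing $x=0.x_{11}x_{21}\cdots x_{k1}x_{12}\cdots$, the map $\phi$ retains exactly the ``first-colour'' digits $x_{1j}$, i.e. the digits in positions $1,k+1,2k+1,\dots$. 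The elementary half is then immediate: if $x$ and $y$ share their first $km$ $N$-adic digits, they share the first $m$ first-colour digits, so $\phi(x)$ and $\phi(y)$ agree through their first $m$ digits and $|\phi(x)-\phi(y)|\le N^{-m}$, while simultaneously $|x-y|\le N^{-km}$. Granting the converse implication, I would finish by choosing, for $x\ne y$, the integer $m$ with $N^{-k(m+1)}\le|x-y|<N^{-km}$ and concluding $|\phi(x)-\phi(y)|\le N^{-m}\le N\,|x-y|^{1/k}$.

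To make the converse precise I would group the digits into blocks of length $k$, so that $x$ has a base-$N^{k}$ expansion whose $j$-th digit is the $j$-th rainbow block $B_j\in\{0,\dots,N^{k}-1\}$, and $\phi$ replaces each $B_j$ by its leading $N$-ary digit $\lfloor B_j/N^{k-1}\rfloor=x_{1j}$. An interval of length $<N^{-km}$ meets at most two adjacent level-$m$ base-$N^{k}$ cylinders. If $x,y$ lie in the same cylinder their first $km$ digits agree and we are in the easy case. If they lie in adjacent cylinders separated by incrementing the $m$-th block \emph{without carrying}, then the leading digit of that block changes by at most one and the blocks beyond position $m$ contribute at most $N^{-m}$ in total, so $|\phi(x)-\phi(y)|\le 2N^{-m}$ follows directly.

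The genuine obstacle is the carrying case, which is the usual pathology of digit maps near $N^{k}$-adic rationals: when the top blocks $B_{m-r+1},\dots,B_{m}$ are all maximal, passing to the neighbouring cylinder resets them to $0$ and increments $B_{m-r}$, and a short computation on the block expansion shows that $\phi$ can jump by roughly $N^{-(m-r)}\gg N^{-m}$ \emph{unless} incrementing $B_{m-r}$ actually raises its leading digit. Two points of $X$ accumulating at such a boundary from the two sides would then force $\phi$ to be discontinuous, so the estimate can only hold because the hypotheses forbid these configurations. This is exactly where I expect to use that the vertex matrices $A_1,\dots,A_k$ \emph{and} their product $A=A_1\cdots A_k$ are $\{0,1\}$-matrices, together with strong connectivity: these pin down the admissible digit strings of $X$, and in particular the first-colour string $(x_{1j})_j$ is precisely a path in the $\{0,1\}$-graph with adjacency matrix $A$, as in the $N$-adic fractal construction recalled above and in Theorem~2.17 of \cite{MP}. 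Concretely I would show that a point of $X$ whose rainbow blocks are eventually maximal cannot be approached within $X$, across the same boundary, by points whose blocks are eventually $0$ when the leading-digit increment fails; equivalently, any carrying that does occur is automatically compensated because $\phi$ carries the first-colour digits consistently, which one verifies on the block expansion. This digit-pattern analysis, rather than the two short Euclidean estimates, is the main difficulty, and once it is in place the displayed H\"older bound follows as above.
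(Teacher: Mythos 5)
Your decision to prove the exponent-$1/k$ estimate rather than the literal statement is not a reinterpretation but a correction: with the paper's own definition of $1$-H\"older (namely $N(\phi)=\sup_{x\neq y}|\phi(x)-\phi(y)|/|x-y|<\infty$, i.e.\ Lipschitz), the lemma is \emph{false} for $k\geq 2$. In Ledrappier's example (Example \ref{ex-ledrappier}), with the vertices labelled $0,1,2,3$ as in the displayed matrices, one has $A_1(0,0)=A_2(0,0)=1$ and $A_2(0,2)=1$; hence $y=0.000\cdots=0$ lies in $X$, and for every $j$ so does a point $x$ whose expansion is $2j$ zeros, then the digit $2$, then any admissible tail. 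Then $|x-y|\leq 3\cdot 4^{-(2j+1)}$ while $|\phi(x)-\phi(y)|\geq 2\cdot 4^{-(j+1)}$, so the difference quotients grow like $4^{j}$. (Abstractly: Theorem \ref{prop:M=Hausd} gives $\dim_H X=\frac{1}{k}\log_N\rho(A)$, while $\phi(X)$ has dimension $\log_N\rho(A)$ by Theorem 2.17 of \cite{MP} --- for Ledrappier $\phi(X)=[0,1]$ --- and a Lipschitz map cannot raise Hausdorff dimension.) Correspondingly, the paper's proof is exactly the pair of ``short Euclidean estimates'' you decline to assume, and it does fail: its displayed bound $|x-y|\geq\sum_{j\in J}\alpha_j N^{-((j-1)k+1)}$ is precisely what carrying destroys (the pair above violates it), and even granting it, the bound finally obtained, $N^{(j_0-1)k+1}/\alpha_{j_0}$ with $j_0=\min J$, depends on the pair $(x,y)$, so it never bounds the supremum; what that inequality would actually yield, if true, is your H\"older-$1/k$ bound, not Lipschitz continuity. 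Note also that exponent $1/k$ still suffices for Proposition \ref{prop:fin-Hausd-dim}, since an $\alpha$-H\"older map increases Hausdorff dimension by at most the factor $1/\alpha$.

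The genuine gap is in the last third of your proposal: the carrying case, which you rightly call the main difficulty, is only a plan, and it is the entire content of the lemma. What must be proved is a uniform statement of the following kind: there is $r_0$ (depending on $\Lambda$ and on the chosen labelling) such that no two points of $X$ can agree through position $p-1$, differ by exactly $1$ at a position $p$ that is \emph{not} first-colour, and then have digits $0$ (in one) and $N-1$ (in the other) at every position $p<i\leq p+r$ with $r>r_0$; equivalently, $|x-y|\geq N^{-(p+r_0+1)}$ whenever the admissible expansions first disagree at position $p$. Your own jump computation shows this is not a removable technicality: if such configurations exist for arbitrarily long $r$, then the H\"older-$1/k$ bound itself is false, since $\phi$ jumps by about $N^{-\lceil p/k\rceil}$ while $|x-y|\approx N^{-(p+r)}$. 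Whether the hypotheses ($\{0,1\}$ vertex matrices, $\{0,1\}$ product, strong connectivity) exclude this is exactly what you have not shown --- and it is delicate, because the dangerous pattern involves the specific labels $0$ and $N-1$ carrying loops of every colour plus a vertex with two outgoing edges of one colour feeding into those loops, so it is sensitive to the (arbitrary) labelling of $\Lambda^0$. In concrete cases the check can be easy (in Ledrappier's example $A_1(3,3)=0$, so runs of the digit $3$ have length at most $2$ and $r_0=2$ works), but no general argument is given. So: right target exponent, correct treatment of the two harmless cases, correct diagnosis of the obstruction --- but the obstruction itself is left unresolved, and the paper's own proof offers nothing to fill it, being invalid at precisely that point.
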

\begin{proof}
Recall that $\phi$ is 1-H\"older continuous if $N(\phi) :=\sup_{x\not= y} \frac{|\phi(x) - \phi(y)|}{|x-y|} < \infty.$  Thus, suppose that $x \not= y$.  Define $J = \{j\in \N: x_{1j} \not= y_{1j}\};$
without loss of generality, we may assume $J \not= \emptyset$ (if $J = \emptyset$ then $|\phi(x) - \phi(y)| = 0$).

For each $j \in J$, let $\alpha_j = |x_{1j} - y_{1j}|$.  Observe that $|\phi(x) - \phi(y)| = \sum_{j\in J} \frac{\alpha_j}{N^j}$, whereas
\[|x-y| \geq \sum_{j\in J} \frac{\alpha_j}{N^{(j-1)k + 1}} \geq \frac{\alpha_{j_0}}{N^{(j_0-1)k+1}}\]
for any fixed $j_0 \in J$. Consequently,
\begin{align*}
N(\phi) &= \sum_{j \in J} \frac{ \frac{\alpha_j}{N^j}}{|x-y|} \leq \sum_{j\in J} \frac{ \frac{\alpha_j}{N^j}}{\frac{\alpha_{j_0}}{N^{(j_0-1)k + 1}}} = \frac{N^{(j_0-1)k +1}}{\alpha_{j_0}} \sum_{j\in J} \frac{\alpha_j}{N^j} \leq  \frac{N^{(j_0-1)k +1}}{\alpha_{j_0}} < \infty,
\end{align*}
since $\sum_{j\in J} \frac{\alpha_j}{N^j} \leq \sum_{n\in \N} \frac{N-1}{N^n} = 1.$
\end{proof}

\begin{proof}[Proof of Proposition \ref{prop:fin-Hausd-dim}]
Write $A = A_1 A_2 \cdots A_k$.
Observe that if
\[x = 0.x_{11} x_{21} \ldots x_{k1} x_{12} x_{22} \ldots x_{k2} x_{13} \ldots \in X,\] then the sequence $x_{11}, x_{12} ,x_{13}, \cdots$ satisfies
\[A(x_{1j}, x_{1(j+1)})  = 1 \ \forall \ j \in \N.\]
 Our hypotheses imply that $A$ is a $\{0,1\}$ matrix with a unique positive unimodular eigenvector, so the proof of Theorem 2.17 of \cite{MP} says that if we define $\phi: X \to [0,1]$ by $\phi(x) = 0.x_{11} x_{12} x_{13} \cdots$, as in Lemma \ref{lem:holder-cts}, then $\phi(X)$ is the space $\Lambda_{A} \subseteq [0,1]$ studied in Theorem 2.17 of \cite{MP}.  Thus, Part (3) of this theorem tells us that $\phi(X)$ has finite, positive Hausdorff dimension.

Since $\phi$ is 1-H\"older continuous by Lemma \ref{lem:holder-cts}, Proposition 2.2 of \cite{Shah} tells us that the Hausdorff dimension of $X$ is bounded below by that of $\phi(X)$.  In particular, $X$ has nonzero Hausdorff dimension.

To see that the Hausdorff dimension of $X$ is finite, recall that the Hausdorff dimension of $X$ is
\[\delta_X = \inf\{d \geq 0: C^d(X) = 0\},\]
 where
 \[C^d(X) = \inf \{\sum_{i \in I} r_i^d: \text{ there is a cover $\{U_i\}_{i\in I}$ of $X$ with the radius of $U_i$ being $r_i$} \}.\] Thus, if we can show that there exists $d\geq 0$ such that $C^d(X) = 0$, this will establish that $X$ has finite Hausdorff dimension.

 Fix $d > 1$ and let $\ell \in \N$ be arbitrary.  Consider the collection $I_\ell$ of finite $N$-adic numbers in $[0,1]$, such that every element $y \in I_\ell$ has an $N$-adic expansion of length $\ell$.  We define a cover $\{U_y\}_{y\in I_\ell}$ of $X$ by
 \[ U_y = \{x \in X: \text{ the first } \ell \text{ terms of $x$ are given by $y$}\}.\]
 Notice that there are at most $N^\ell$ nonempty sets $U_y$, and that the diameter of the set $U_y$ is strictly less than $1/N^{\ell}$.  Thus, if $d > 1$,
 \[C^d(X) \leq  \inf_{\ell \in \N} \frac{N^\ell}{N^{d\ell}} = \inf_{\ell} \frac{1}{N^{(d-1)\ell}} = 0.\]
 In other words, $\delta_X \leq 1 < \infty$, so the Hausdorff dimension of $X$ is finite and nonzero.
\end{proof}

Having established the existence of a finite, nonzero Hausdorff measure $H^s$ on $X$, we now show that $\frac{1}{H^s(X)}H^s= \mu$ by using the uniqueness of the  Perron-Frobenius measure $M$ of Definition \ref{def:perron-frobenius}.

\begin{proof}[Proof of Theorem \ref{prop:M=Hausd}]
Recall from Proposition 8.1 of \cite{aHLRS3} that $M$ is the unique Borel probability measure on $\Lambda^\infty$ such that $M(Z(\lambda)) = \rho(\Lambda)^{-d(\lambda)} M(Z(s(\lambda)))$ for all $\lambda \in \Lambda$.  By construction, our rescaled Hausdorff measure $ \frac{1}{H^s(X)} H^s$ is a probability measure. Thus, if we show that  $H^s$ satisfies
\begin{equation}
H^s(\Psi(Z(\lambda))) = \rho(\Lambda)^{-d(\lambda)} H^s(\Psi(Z(s(\lambda)))),
\label{eq:hausd-meas}
\end{equation}
this will establish that $\frac{1}{H^s(X)}H^s = \mu$.

We begin by proving Equation \eqref{eq:hausd-meas} when $d(\lambda) = (m, m, \ldots, m)$ for some $m \in \N$.  In this case, if  we abuse notation by writing
\[
\Psi(\lambda) = 0.y_{11} y_{21} \cdots y_{k1} y_{12} \cdots y_{km} y_{1(m+1)},
\]
we have
\[\Psi(Z(\lambda)) = \{x \in X: x_{ij} = y_{ij} \ \forall \ 1 \leq i \leq k, 1 \leq j \leq m+1\}.\]
Similarly,
\[\Psi(Z(s(\lambda))) = \{x \in X: x_{11} = y_{1(m+1)}\} = \{\left( x - \Psi(\lambda) \right) N^{km} + \frac{y_{11}}{N} : x \in \Psi(Z(\lambda))\}.\]
Thus, the scaling- and translation-invariance of the Hausdorff measure (Theorem \ref{thm:hausd-meas-properties}) implies  that
\begin{equation}
H^s(\Psi(Z(s(\lambda)))) = N^{(km)s} H^s(\Psi(Z(\lambda))).
\label{eq:Hs-squares}
\end{equation}

Recall that for any vertex $v$, we can write $Z(v)$ as a disjoint union $Z(v) = \cup_{\lambda \in v\Lambda^{(1,\ldots, 1)}} Z(\lambda)$.  Moreover, we can identify each such path $\lambda$ with the unique string of vertices $(v^\lambda) = v v_1 v_2 \cdots v_k$ such that the edge $f_i$ from $v_i$ to $v_{i-1}$ has shape $e_i$ and $\lambda = f_1 \cdots f_k$. By abuse of notation, we will also write $\lambda = v v_1 v_2 \cdots v_k$. Now, the uniqueness of the string of vertices $(v^\lambda)$, combined with the fact that the vertex matrices $A_i$ are 0-1 matrices, implies that
\[Z(v) = \cup_{v_i \in \Lambda^0} A_1(v, v_1) A_2(v_1, v_2) \cdots A_k (v_{k-1}, v_k) Z(v v_1 \cdots v_k), \]
and hence
\[\Psi(Z(v)) = \cup_{v_i \in \Lambda^0} A_1 (v, v_1) \cdots A_k(v_{k-1}, v_k) \Psi(Z(v v_1 \cdots v_k)).\]
The fact that this union is disjoint means that
\begin{align*}
H^s(\Psi(Z(v)) & = \sum_{v_i \in \Lambda^0} A_1(v, v_1) \cdots A_k (v_{k-1}, v_k) H^s(\Psi(Z(v v_1 \cdots v_k))) \\
&= \sum_{v_i} A_1(v, v_1) \cdots A_k(v_{k-1}, v_k) N^{-ks} H^s(\Psi(Z(v_k)))\\
&= \sum_{v_k \in \Lambda^0} A_1 A_2 \cdots A_k (v, v_k) N^{-ks} H^s(\Psi(Z(v_k))).\end{align*}
In other words, the vector $(H^s(\Psi(Z(v)))_{v\in \Lambda^0}$ is an eigenvector for the product $A = A_1 \cdots A_k$, with eigenvalue $N^{ks}$.

On the other hand, we know from Proposition 3.1 of \cite{aHLRS3} that $x^\Lambda$ is the unique common  unimodular  Perron-Frobenius eigenvector for the vertex matrices  $A_i$.
Since commuting matrices have the same eigenspaces, and $(H^s(\Psi(Z(v)))_{v\in \Lambda^0}$ is an eigenvector for $A$, it follows that $(H^s(\Psi(Z(v)))_{v\in \Lambda^0}$ is an eigenvector for each vertex matrix $A_i$.
Thus, assuming that $H^s(\Psi(Z(v)))$ is nonzero for all vertices $v \in \Lambda^0$, the uniqueness of $x^\Lambda$ tells us that $(H^s(\Psi(Z(v)))_{v\in \Lambda^0}$ must be a scalar multiple of $x^\Lambda$, and moreover that $N^{ks} = \rho(\Lambda)^{(1, \ldots, 1)}$.  Inserting this into Equation \eqref{eq:Hs-squares} tells us that if $d(\lambda) = (m,\ldots, m)$, then
\[H^s(\Psi(Z(\lambda))) = \rho(\Lambda)^{-(m,\ldots, m)} H^s(\Psi(Z(s(\lambda)))).\]

To see that $H^s(\Psi(Z(v)))$ is nonzero, we observe that (in the notation of Lemma \ref{lem:holder-cts})
\[\phi(\Psi(Z(v))) = \{x \in [0,1]: x = 0.v \ldots\} = \Psi(Z(v)). \]
Let $h$ be the Hausdorff measure on $\phi(X)$ associated to its Hausdorff dimension (which we know is finite by Lemma \ref{lem:holder-cts}); then Theorem 2.17(2) of \cite{MP} tells us that $h(\Psi(Z(v))) =x^\Lambda_v.$ In particular, $h(\Psi(Z(v)))$ is finite and nonzero.  Now, the uniqueness of the nonzero Hausdorff measure of a set (Proposition \ref{2.4})  tells us that
\begin{align*}
x^\Lambda_v & =  h(\phi(\Psi(Z(v))) = h (\Psi(Z(v)))\\
& \Rightarrow H^s(\Psi(Z(v)) = h(\Psi(Z(v))) = x^\Lambda_v,
\end{align*}
and $x^\Lambda_v$ is nonzero for all $v \in \Lambda^0$ by definition.

Having thus established Equation \eqref{eq:hausd-meas} when $d(\lambda) = (m, m, \ldots, m)$, we now proceed to the general case.  Let $\lambda \in \Lambda$ be arbitrary, and write $d(\lambda) = (m_1, \ldots, m_k)$. Let $m = \max\{m_i\}$ and let $n = (m, m, \ldots, m) - d(\mu)$.  Define
\[C_\lambda = \{\nu \in \Lambda: r(\nu) = s(\lambda), d(\nu) = n\}.\]
  In words, $C_\lambda$ consists of the paths $\nu$ such that  the product $ \lambda \nu$ is defined and \[d(\lambda \nu) = d(\nu) + d(\lambda) = (m, m, \ldots, m).\]
Moreover, $Z(\lambda)$ can be written as a disjoint union $Z(\lambda) = \cup_{\nu \in C_\lambda} Z(\lambda\nu)$.
Thus,
\[H^s(\Psi(Z(\lambda))) = \sum_{\nu \in C_\lambda} H^s(\Psi(Z(\lambda \nu))) = \sum_{\nu \in C_\lambda} \rho(\Lambda)^{-(m,\ldots, m)} H^s(\Psi(Z(s(\nu)))).\]

For each path $\nu \in C_\lambda$, write $\nu = f^\nu_1 f^\nu_2 \cdots f^\nu_{|d(\nu)|}$, where $|d(f^\nu_i)|=1 \ \forall \ i$, and we list all the color-1 edges first, then all the color-2 edges, etc.
Notice that for each $\nu \in C_\lambda$, there will be $m-m_1$ edges of color 1, then $m-m_2$ edges of color 2, and so on. Write
\[ s(\lambda) v^\nu_{11} v^\nu_{12} \cdots v^\nu_{1(m-m_1)} v^\nu_{21} \cdots v^\nu_{2(m-m_2)} \cdots v^\nu_{k(m-m_k)}\]
 for the unique sequence of vertices associated to $\nu$ in this decomposition.

Since $(H^s(\Psi(Z(v))))_{v\in\Lambda^0}$ is a scalar multiple of $x^\Lambda$ and $x^\Lambda$ is an eigenvector for each vertex matrix $A_i$, with eigenvalue $r(A_i)$ the Perron-Frobenius eigenvalue of $A_i$, we also have that $(H^s(\Psi(Z(v))))_{v\in\Lambda^0}$ is an eigenvector for each vertex matrix $A_i$, with eigenvalue $r(A_i)$. 
Thus,
\begin{align*}
& H^s(\Psi(Z(s(\lambda)))  = \\
& = \rho(\Lambda)^{-(m, \ldots, m) + d(\lambda)}\sum_{v_{ij} \in \Lambda^0}  A_1(s(\lambda), v_{11}) A_1(v_{11}, v_{12}) \cdots A_1(v_{1(m-m_1-1)}, v_{1(m-m_1)}) \\
& \qquad \qquad \qquad \times A_2 (v_{1(m-m_1)}, v_{21}) A_2(v_{21}, v_{22})\cdots A_2(v_{2(m-m_2-1)}, v_{2(m-m_2)}) \cdots \\
& \qquad \qquad \qquad \times \cdots A_k (v_{k(m-m_k-1)}, v_{k(m-m_k)}) H^s(\Psi(Z(v_{k(m-m_k)})))\\
&= \sum_{\nu \in C_\lambda} \rho(\Lambda)^{-d(\nu)}  H^s(\Psi(Z(s(\nu)))),
\end{align*}
since $\nu \in C_\lambda$ precisely when $\nu$ is associated to  a string of vertices
\[v_{11} v_{12} \cdots v_{1(m-m_1)} \cdots v_{k(m-m_k)}
\]
 such that the massive matrix product above is nonzero.

It now follows that
\begin{align*}
H^s(\Psi(Z(s(\lambda)))) &= \rho(\Lambda)^{-(m,\ldots, m) + d(\lambda)} \sum_{\nu \in C_\lambda} H^s (\Psi(Z(s(\nu)))) \\
&=  \rho(\Lambda)^{-(m,\ldots, m) + d(\lambda)} \rho(\Lambda)^{(m,\ldots, m)} H^s(\Psi(Z(\lambda))) \\
&= \rho(\Lambda)^{d(\lambda)} H^s(\Psi(Z(\lambda))).
\end{align*}
In other words, $H^s$ satisfies the scaling property \eqref{eq:hausd-meas} for all $\lambda \in \Lambda$.  It follows that the Hausdorff measure $H^s$ and the measure $\mu$ are multiples of each other, as claimed.
\end{proof}

By a similar construction, one can also produce a $\Lambda$-semibranching function system on a Sierpinski-type fractal set in $[0,1]^{k+1}$, in analogy with the construction in Section 2.6 of \cite{MP}.  Thus, we also obtain a  representation of $C^*(\Lambda)$ on a fractal subspace of $\R^{k+1}$.

 As a corollary of the above proof, we obtain that in many cases, $M$ is the unique measure on $\Lambda^\infty$ that admits a $\Lambda$-semibranching function system.
\begin{cor}
\label{cor:-constant-RN}
  Let $\Lambda$ be a strongly connected finite $k$-graph
  , and suppose that we
have a $\Lambda$-semibranching function system
on the probability space $(\Lambda^{\infty},\mu)$ for some measure $\mu$ on $\Lambda^\infty$.  Suppose  that there exists  $C\in (0,\infty)^k$ such that  the Radon-Nikodym derivative is given by $\Phi_{\tau_\lambda}=C^{-d(\lambda)}$ for all  $\lambda\in\Lambda$. Then $\mu = M$, where $M$ is the Perron-Frobenius measure.
\end{cor}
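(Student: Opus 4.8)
The plan is to show that $\mu$ satisfies the defining recursion of the Perron-Frobenius measure and then invoke its uniqueness (Proposition~8.1 of \cite{aHLRS3}). The heart of the argument is to prove that the vector $y = (\mu(D_v))_{v \in \Lambda^0}$ is a strictly positive common eigenvector of the vertex matrices $A_i$, which simultaneously forces $C = \rho(\Lambda)$ and pins down $y$.

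First I would record two consequences of the hypotheses. Since $\Phi_{\tau_\lambda} = C^{-d(\lambda)}$ is constant, the Radon-Nikodym relation $\mu(\tau_\lambda(E)) = \int_E \Phi_{\tau_\lambda}\,d\mu$ applied to $E = D_\lambda$ gives $\mu(R_\lambda) = C^{-d(\lambda)}\mu(D_\lambda)$; and Condition (c) of Definition \ref{def-lambda-brach-system} (with $\nu = s(\lambda)$) gives $D_\lambda = D_{s(\lambda)}$, so that $\mu(R_\lambda) = C^{-d(\lambda)}\mu(D_{s(\lambda)})$. Second, Equation \eqref{eq:ck4-claim} from the proof of Theorem \ref{thm:gen_repn} tells us that for each $v \in \Lambda^0$ and $n \in \N^k$ the set $D_v$ is the disjoint (mod $\mu$-null) union $\cup_{\lambda \in v\Lambda^n} R_\lambda$.

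Combining these, I would compute, writing $y_v = \mu(D_v)$ and using $|v\Lambda^n w| = (A_1^{n_1}\cdots A_k^{n_k})(v,w)$,
\[ y_v = \sum_{\lambda \in v\Lambda^n} \mu(R_\lambda) = C^{-n}\sum_{w \in \Lambda^0} |v\Lambda^n w|\, y_w = C^{-n}\big(A_1^{n_1}\cdots A_k^{n_k}\, y\big)_v. \]
Taking $n = e_i$ yields $A_i y = C_i y$ for each $i$. By Condition (b) of Definition \ref{def-lambda-brach-system}, $y_v = \mu(D_v) > 0$ for all $v$, so $y$ is a strictly positive common eigenvector of the $A_i$. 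The uniqueness of the common unimodular Perron-Frobenius eigenvector (the discussion following Definition \ref{def:irreduc-family-def}) then forces $C_i = \rho(A_i)$, i.e. $C = \rho(\Lambda)$, and $y = c\, x^\Lambda$ for some scalar $c > 0$. Since the vertex maps $\{\tau_v\}_{v \in \Lambda^0}$ form a semibranching function system (Condition (a) with $m = 0$), the sets $\{D_v\}_{v \in \Lambda^0}$ partition $\Lambda^\infty$ up to a null set, so $\sum_v y_v = \mu(\Lambda^\infty) = 1 = \sum_v x^\Lambda_v$; hence $c = 1$ and $\mu(D_v) = x^\Lambda_v$.

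Putting these together gives $\mu(R_\lambda) = \rho(\Lambda)^{-d(\lambda)} x^\Lambda_{s(\lambda)} = M(Z(\lambda))$ for every $\lambda \in \Lambda$. For the canonical system of Proposition \ref{prop:kgraph} one has $R_\lambda = Z(\lambda)$, so $\mu(Z(\lambda)) = \rho(\Lambda)^{-d(\lambda)}\mu(Z(s(\lambda)))$ for all $\lambda$, and the uniqueness in Proposition~8.1 of \cite{aHLRS3} yields $\mu = M$. I expect the only point requiring care to be this last identification: one must either work with the canonical shift system (where the ranges $R_\lambda$ are exactly the cylinder sets) or observe that the refining partitions $\{R_\lambda : d(\lambda) = (n,\ldots,n)\}$ generate the Borel $\sigma$-algebra, so that the computed values determine $\mu$ uniquely. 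The eigenvector recognition itself is routine once the partition identity \eqref{eq:ck4-claim} and the constant-derivative hypothesis are in hand.
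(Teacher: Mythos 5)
Your proposal is correct and follows essentially the same route as the paper: both arguments show that the constant Radon--Nikodym hypothesis makes $(\mu(Z(v)))_{v\in\Lambda^0}$ (in your notation, $(\mu(D_v))_{v\in\Lambda^0}$) a strictly positive common eigenvector of the vertex matrices $A_i$, invoke the uniqueness of the common unimodular Perron--Frobenius eigenvector to force $C=\rho(\Lambda)$ and the eigenvector to be $x^\Lambda$, and then conclude $\mu = M$ from the uniqueness statement in Proposition~8.1 of \cite{aHLRS3}. The one difference is that you are more careful than the paper about distinguishing the abstract ranges $R_\lambda$ from the cylinder sets $Z(\lambda)$ (using Equation~\eqref{eq:ck4-claim} and flagging the final identification), whereas the paper's proof writes $\mu(Z(\lambda)) = C^{-d(\lambda)}\mu(Z(s(\lambda)))$ directly, tacitly working with the canonical prefixing maps of Proposition~\ref{prop-main-exampl}.
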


\begin{proof} As in the proof of Theorem \ref{prop:M=Hausd}, the uniqueness of the measure $M$ means that it will suffice to show that $\mu$ satisfies Equation \eqref{eq:hausd-meas} with $\mu$ replacing $H^s \circ \Psi$. Our hypothesis that the Radon-Nikodym derivative $\Phi_{\tau_\lambda}$ is given by $C^{-d(\lambda)}$ implies that
\[\mu(Z(\lambda)) = C^{- d(\lambda)} \mu(Z(s(\lambda))).\]
One now uses this fact to observe that $(\mu(Z(v)))_{v \in \Lambda^0}$ is an eigenvector for each of the vertex matrices $A_i$ of $\Lambda$, with eigenvalue $C^{e_i}$.  Then the uniqueness of the Perron-Frobenius eigenvector implies that $(\mu(Z(v)))_{v \in \Lambda^0} = x^\Lambda$, and that $C= \rho(\Lambda)$.  As indicated above, the uniqueness of $M$ now implies that $M = \mu$.
\end{proof}

\subsection{KMS states associated to $\Lambda$-semibranching function systems}
\label{sec:kms}

In this section we show how to obtain KMS states on $C^*(\Lambda)$ from certain $\Lambda$-semibranching function systems. 
As we show in Theorem \ref{thm:KMS} below, we obtain KMS states on $C^*(\Lambda)$ whenever the Radon-Nikodym derivative $\Phi_{\tau_\lambda}$ of the $\Lambda$-semibranching function system has a special form and the dynamics on $C^*(\Lambda)$ are given in terms of the Radon-Nikodym derivative. 
 We apply Theorem~\ref{thm:KMS} to our two main examples discussed in Section~\ref{sec:example}. In Corollary~\ref{cor:KMS}, we show that the measure space $(\Lambda^\infty, M)$ given in \eqref{eq:measure} with the preferred dynamics $\alpha^r$ defines a KMS state at the inverse temperature $\beta=1$. In Corollary~\ref{cor-constRN-KMS}, we show that the Cantor-type fractal measure space $(X,\mu)$ associated to the Hausdorff measure defines a KMS state at the inverse temperature $\beta=s$, where $s$ is the Hausdorff dimension. Moreover we show in Corollary~\ref{cor-constRN-KMS} how to apply the ideas of Theorem~\ref{thm:KMS} to obtain KMS states associated to a semibranching function system.  Note that our KMS states provide a concrete realization of certain of the KMS states whose existence was established in \cite{aHLRS2, aHLRS3, aHKR}.

\begin{thm}\label{thm:KMS}
Let $\Lambda$ be a finite $k$-graph 
with no sources, and suppose that we have a $\Lambda$-semibranching function system on the probability space $(X,\mu)$.  Suppose  that there exists  $\omega \in (0,\infty)$ and $C\in (0,\infty)^k$ such that  the Radon-Nikodym derivative is given by $\Phi_{\tau_\lambda}=C^{-\omega d(\lambda)}$ for all  $\lambda\in\Lambda$. Let $\alpha$ be the dynamics on $C^*(\Lambda)$ given by
\[
\alpha_t=\gamma_{C^{ it}},
\]
where $\gamma$ is the gauge action on $C^*(\Lambda)$. Then the measure $\mu$ on $X$ defines a KMS state $\phi$ of the system $(C^*(\Lambda),\alpha)$ at the inverse temperature $\beta=\omega$ by
\begin{equation}\label{eq:state_M}
\phi(s_\lambda s^*_\nu)=\delta_{\lambda,\nu}\, \mu(R_\lambda).
\end{equation}.
\end{thm}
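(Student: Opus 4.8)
The plan is to exhibit $\phi$ as the composition of a faithful conditional expectation with a vector state, which makes positivity automatic, and then to verify the KMS condition directly on the spanning elements $s_\mu s_\nu^*$.

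First I would construct the state. Let $\mathcal{D} = \clsp\{s_\lambda s_\lambda^* : \lambda \in \Lambda\}$ be the commutative ``diagonal'' subalgebra of $C^*(\Lambda)$, and recall the standard faithful conditional expectation $E \colon C^*(\Lambda) \to \mathcal{D}$ determined on spanning elements by $E(s_\mu s_\nu^*) = \delta_{\mu,\nu} s_\mu s_\mu^*$ (obtained, for instance, by composing the gauge expectation $a \mapsto \int_{\T^k}\gamma_z(a)\,dz$ with the canonical diagonal expectation of the AF core). Let $\mathbf{1} \in L^2(X,\mu)$ be the constant function; since $\mu$ is a probability measure, $\mathbf{1}$ is a unit vector. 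I would then define $\phi = \langle \pi_S(E(\,\cdot\,))\mathbf{1}, \mathbf{1}\rangle$, where $\pi_S$ is the representation of Theorem \ref{th-branching-sys-gives-repres}. As a composition of the positive unital maps $E$, $\pi_S$, and the vector state at $\mathbf{1}$, this $\phi$ is automatically a state. To see that it agrees with \eqref{eq:state_M}, I would use the computation from the proof of Theorem \ref{th-branching-sys-gives-repres} that $\pi_S(s_\lambda s_\lambda^*) = S_\lambda S_\lambda^*$ is multiplication by $\chi_{R_\lambda}$; hence $\phi(s_\mu s_\nu^*) = \delta_{\mu,\nu}\langle \chi_{R_\mu}\mathbf{1}, \mathbf{1}\rangle = \delta_{\mu,\nu}\mu(R_\mu)$. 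Unitality ($\phi(1)=1$, with $1 = \sum_{v}s_v$) then follows because $\{R_v : v \in \Lambda^0\}$ partitions $X$ up to $\mu$-null sets (Definition \ref{def-lambda-brach-system}(a) with $m=0$), so that $\phi(1) = \sum_v \mu(R_v) = \sum_v \mu(D_v) = \mu(X) = 1$.

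The one identity I would isolate for repeated use is the scaling relation. Since $R_\lambda = \tau_\lambda(D_\lambda)$ and $\Phi_{\tau_\lambda} = C^{-\omega d(\lambda)}$ is constant, the Radon--Nikodym property gives
\[
\mu(R_\lambda) = (\mu \circ \tau_\lambda)(D_\lambda) = \int_{D_\lambda} \Phi_{\tau_\lambda}\, d\mu = C^{-\omega d(\lambda)}\,\mu(D_{s(\lambda)}) = C^{-\omega d(\lambda)}\,\mu(R_{s(\lambda)}),
\]
using $D_\lambda = D_{s(\lambda)} = R_{s(\lambda)}$ from Definition \ref{def-lambda-brach-system}(b),(c) and $\mu(D_v)>0$. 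In particular, if $s(p)=s(q)$ then $\mu(R_p) = C^{-\omega(d(p)-d(q))}\mu(R_q)$.

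Finally I would verify the KMS condition. By Section \ref{subsection:background_Dyn} it suffices to check \eqref{eq:KMS_cond} with $\beta=\omega$ on $a = s_\mu s_\nu^*$ and $b = s_\eta s_\zeta^*$, where $\alpha_{i\omega}(a) = C^{-\omega(d(\mu)-d(\nu))}a$. Expanding each product with the relation \eqref{eq:MCE}, I obtain
\[
\phi(ab) = \sum_{(\alpha,\beta)\in\Lambda^{\min}(\nu,\eta)} \delta_{\mu\alpha,\zeta\beta}\,\mu(R_{\mu\alpha}), \qquad
\phi(b\,\alpha_{i\omega}(a)) = C^{-\omega(d(\mu)-d(\nu))}\!\!\sum_{(\gamma,\delta)\in\Lambda^{\min}(\zeta,\mu)} \delta_{\eta\gamma,\nu\delta}\,\mu(R_{\eta\gamma}).
\]
Since $\phi$ is gauge-invariant (the factor $z^{d(\mu)-d(\nu)}$ is trivial on the diagonal), both sides vanish unless $d(\mu)-d(\nu) = d(\zeta)-d(\eta)$, so I may assume this. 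The heart of the argument is then that the involution $(\alpha,\beta)\mapsto(\beta,\alpha)$ is a bijection between the two index sets: the path equalities transpose directly ($\mu\alpha=\zeta\beta$ and $\nu\alpha=\eta\beta$ are exactly $\zeta\beta=\mu\alpha$ and $\eta\beta=\nu\alpha$), while the two minimal-degree conditions match precisely because $d(\mu)-d(\nu)=d(\zeta)-d(\eta)$ forces $(d(\mu)-d(\zeta))\vee 0 = (d(\nu)-d(\eta))\vee 0$. Under this bijection the summands agree: writing $p = \mu\alpha = \zeta\beta$ and $q = \nu\alpha = \eta\beta$, one has $s(p)=s(\alpha)=s(\beta)=s(q)$ and $d(p)-d(q)=d(\mu)-d(\nu)$, so the scaling relation yields $\mu(R_{\mu\alpha}) = C^{-\omega(d(\mu)-d(\nu))}\mu(R_{\eta\beta})$, matching corresponding terms. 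Hence $\phi(ab)=\phi(b\,\alpha_{i\omega}(a))$ and $\phi$ is a KMS$_\omega$ state. I expect the bookkeeping of the degree and minimality conditions in this transposition to be the main technical obstacle; everything else is either a standard structural fact or a direct application of the scaling relation.
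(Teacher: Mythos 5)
Your proof is correct, and it takes a more self-contained route than the paper's. The backbone is the same in both: the scaling identity $\mu(R_\lambda)=C^{-\omega d(\lambda)}\mu(R_{s(\lambda)})$, which you derive exactly as in the paper's Equation \eqref{eq:KMS_measure} from constancy of the Radon--Nikodym derivative and $D_\lambda = R_{s(\lambda)}$. The differences are at the two points where the paper delegates. First, the paper simply asserts that ``since $\mu$ is a probability measure, $\phi$ is positive and $\phi(1)=1$,'' whereas you build $\phi = \langle \pi_S(E(\,\cdot\,))\mathbf{1},\mathbf{1}\rangle$ from the diagonal expectation, the representation of Theorem \ref{th-branching-sys-gives-repres}, and the vector state at the constant function; this is a genuine improvement, since \eqref{eq:state_M} only prescribes $\phi$ on spanning elements and its extension to a positive linear functional is not immediate from the formula alone. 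Second, after establishing the scaling relation, the paper concludes by citing Proposition~3.1(b) of \cite{aHLRS2}, which states precisely that a state satisfying $\phi(s_\lambda s_\nu^*)=\delta_{\lambda,\nu}C^{-\omega d(\lambda)}\phi(s_{s(\lambda)})$ is a KMS$_\omega$ state for $\alpha_t=\gamma_{C^{it}}$; your direct verification via \eqref{eq:MCE} and the transposition $(\alpha,\beta)\mapsto(\beta,\alpha)$ is, in effect, a proof of that cited proposition, and your degree bookkeeping (both sides vanish unless $d(\mu)-d(\nu)=d(\zeta)-d(\eta)$, and under that condition the two minimality constraints coincide) is accurate. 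One small point of precision: your transposition is a bijection between the \emph{nonzero} summands rather than between the full index sets $\Lambda^{\min}(\nu,\eta)$ and $\Lambda^{\min}(\zeta,\mu)$ --- a pair with $\mu\alpha\ne\zeta\beta$ need not transpose into the other minimal-factorization set --- but this is harmless since such terms contribute zero. In sum, the paper's route buys brevity by outsourcing both statehood and the KMS criterion; yours buys a self-contained argument at the cost of the combinatorics you correctly identified as the main technical burden.
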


\begin{proof}

Since $\mu$ is a probability measure, $\phi$ is positive and $\phi(1)=1$. Thus $\phi$ is a state on $C^*(\Lambda)$.

 To see that $\phi$ is a KMS state, first we compute, for $\lambda\in\Lambda$
\begin{equation}\label{eq:KMS_measure}
\begin{split}
\mu(R_\lambda)&=\int_{D_\lambda}\frac{d(\mu\circ \tau_\lambda)}{d\mu}\, d\mu=\int_{D_\lambda}\Phi_{\tau_\lambda}\, d\mu =\Phi_{\tau_\lambda}\mu(D_\lambda)=C^{-\omega d(\lambda)}\mu(R_{s(\lambda)})
\end{split}
\end{equation}
So $\phi$ satisfies the following:
\[
\phi(s_\lambda s^*_\nu)=\delta_{\lambda,\nu}C^{-\omega d(\lambda)}\phi(s_{s(\lambda)})\quad\text{for all $\lambda,\nu\in\Lambda$}.
\]
Then Proposition~3.1(b) of \cite{aHLRS2} implies that $\phi$ is a KMS state of $(C^*(\Lambda),\alpha)$ at the inverse temperature $\beta=\omega$.
\end{proof}

 Note that  by Corollary \ref{cor:-constant-RN}, every $\Lambda$-semibranching system on $\Lambda^{\infty}$ with constant Radon-Nikodym derivative of the  form specified in Theorem \ref{thm:KMS}
 is endowed with the Perron-Frobenius measure $M$, hence if $X = \Lambda^\infty$, Theorem \ref{thm:KMS} specializes to the $\Lambda$-semibranching system  of Proposition \ref{prop-main-exampl}.

The dynamics defined on $C^*(\Lambda)$ in Theorem~\ref{thm:KMS} is very similar to the preferred dynamics $\alpha^r$ on $C^*(\Lambda)$, where $r=\ln\rho(\Lambda)$. (See Section~\ref{subsection:background_Dyn}). Thus Proposition~\ref{prop-main-exampl} with the preferred dynamics gives an alternate proof of the first statement of Theorem 11.1 of \cite{aHLRS3}.

\begin{cor}\label{cor:KMS}(\cite{aHLRS3} Theorem 11.1)
Let $\Lambda$ be a strongly connected finite $k$-graph. Let $\alpha^r$ be the preferred dynamics given by
\[
\alpha^r_t=\gamma_{\rho(\Lambda)^{it}}.
\]
 Then there is a unique 
 KMS state $\phi$ of the system $(C^*(\Lambda),\alpha^r)$ at the inverse temperature  $\beta=1$, given by
 \[
\phi(s_\mu s^*_\nu)=\delta_{\mu,\nu}M(Z(\mu))
\]
\end{cor}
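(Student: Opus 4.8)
The plan is to split the statement into its existence-and-explicit-formula part, which falls out immediately from the machinery already assembled, and its uniqueness part, which is where the real work lies. For existence I would apply Theorem~\ref{thm:KMS} to the canonical $\Lambda$-semibranching function system of Proposition~\ref{prop-main-exampl} on $(\Lambda^\infty, M)$. There the prefixing maps are the $\sigma_\lambda$, the sets $R_\lambda = Z(\lambda)$, and the Radon--Nikodym derivative was computed to be the constant $\Phi_{\sigma_\lambda} = \rho(\Lambda)^{-d(\lambda)}$. Setting $C = \rho(\Lambda)$ and $\omega = 1$ casts this in the form $\Phi_{\sigma_\lambda} = C^{-\omega d(\lambda)}$ demanded by Theorem~\ref{thm:KMS}, and the associated dynamics $\alpha_t = \gamma_{C^{it}} = \gamma_{\rho(\Lambda)^{it}}$ is exactly the preferred dynamics $\alpha^r$ with $r = \ln \rho(\Lambda)$. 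Theorem~\ref{thm:KMS} then hands us a KMS$_1$ state $\phi$ satisfying $\phi(s_\mu s^*_\nu) = \delta_{\mu,\nu}\, M(R_\mu) = \delta_{\mu,\nu}\, M(Z(\mu))$, which is precisely the formula claimed.

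The remaining and genuinely new content is uniqueness, and this is where I expect the main obstacle to be, since Theorem~\ref{thm:KMS} manufactures only one KMS state and says nothing about whether others exist. My plan is to show directly that every KMS$_1$ state $\psi$ of $(C^*(\Lambda), \alpha^r)$ must coincide with $\phi$. I would begin from the characterization in Proposition~3.1(b) of \cite{aHLRS2} (already invoked inside the proof of Theorem~\ref{thm:KMS}), according to which any KMS$_1$ state $\psi$ obeys
\[
\psi(s_\mu s^*_\nu) = \delta_{\mu,\nu}\, \rho(\Lambda)^{-d(\mu)}\, \psi(s_{s(\mu)}) \qquad \text{for all } \mu, \nu \in \Lambda.
\]
The key step is then to feed the Cuntz--Krieger relation (CK4) into this identity. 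Expanding $s_v = \sum_{\lambda \in v\Lambda^{e_i}} s_\lambda s^*_\lambda$, applying $\psi$, and counting edges by source yields
\[
\psi(s_v) = \rho(A_i)^{-1} \sum_{w \in \Lambda^0} A_i(v,w)\, \psi(s_w),
\]
so that the vector $m = (\psi(s_v))_{v \in \Lambda^0}$ is a common eigenvector of every vertex matrix $A_i$ with eigenvalue $\rho(A_i)$.

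To conclude I would note that $m$ is nonnegative and unimodular: positivity of the state gives $\psi(s_v) \geq 0$, while $\sum_{v \in \Lambda^0} s_v = 1$ in the unital algebra $C^*(\Lambda)$ forces $\sum_v \psi(s_v) = \psi(1) = 1$, so in particular $m \neq 0$. Multiplying the eigenvector relations gives $A m = \rho(\Lambda)^{(1,\ldots,1)} m$ for the irreducible product matrix $A = A_1 \cdots A_k$, and the uniqueness of the common unimodular Perron--Frobenius eigenvector $x^\Lambda$ (Definition~\ref{def:irreduc-family-perr-frob-def}, available because $\Lambda$ is strongly connected) then forces $m = x^\Lambda$. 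Hence $\psi(s_v) = x^\Lambda_v = M(Z(v))$, and the displayed characterization propagates this to $\psi(s_\mu s^*_\nu) = \delta_{\mu,\nu}\, \rho(\Lambda)^{-d(\mu)} x^\Lambda_{s(\mu)} = \delta_{\mu,\nu}\, M(Z(\mu)) = \phi(s_\mu s^*_\nu)$, giving $\psi = \phi$. The subtle point to guard against here is that the individual $A_i$ need not be irreducible, only the family; so I would route the Perron--Frobenius uniqueness through the irreducible product $A$ rather than through any single $A_i$, exactly as in \cite{aHLRS3}. If one prefers to avoid re-deriving uniqueness, it may simply be cited from Theorem~11.1 of \cite{aHLRS3}, in which case the corollary reduces to recording the explicit form of $\phi$ obtained in the first paragraph.
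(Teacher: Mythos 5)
Your existence argument is the paper's own proof, essentially verbatim: apply Theorem~\ref{thm:KMS} with $C=\rho(\Lambda)$ and $\omega=1$ to the $\Lambda$-semibranching function system of Proposition~\ref{prop-main-exampl}, whose Radon--Nikodym derivatives are the constants $\Phi_{\sigma_\lambda}=\rho(\Lambda)^{-d(\lambda)}$. For uniqueness, the paper does exactly what you relegate to a closing remark: it cites Theorem~11.1 of \cite{aHLRS3} and stops.

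The direct uniqueness argument you propose instead has a genuine gap at its first step. Proposition~3.1(b) of \cite{aHLRS2} is a \emph{sufficiency} criterion --- a state satisfying $\psi(s_\mu s_\nu^*)=\delta_{\mu,\nu}\rho(\Lambda)^{-d(\mu)}\psi(s_{s(\mu)})$ for all $\mu,\nu$ is KMS$_1$ --- and that is precisely how the paper invokes it inside Theorem~\ref{thm:KMS}. You use it in the converse direction, asserting that \emph{every} KMS$_1$ state obeys this identity, and the converse is false. The KMS condition only yields $\psi(s_\mu s_\nu^*)=e^{-r\cdot(d(\mu)-d(\nu))}\psi(s_\mu s_\nu^*)$ with $r=\ln\rho(\Lambda)$, so $\psi(s_\mu s_\nu^*)$ is forced to vanish only when $\rho(\Lambda)^{d(\mu)}\neq\rho(\Lambda)^{d(\nu)}$; when $\mu\neq\nu$ but $r\cdot d(\mu)=r\cdot d(\nu)$ (e.g.\ $d(\mu)=e_1$, $d(\nu)=e_2$ in Ledrappier's Example~\ref{ex-ledrappier}, where $\rho(A_1)=\rho(A_2)=2$), nothing kills this term, and for periodic $\Lambda$ there genuinely exist KMS$_1$ states that are nonzero on such elements: Theorem~11.1 of \cite{aHLRS3} identifies the KMS$_1$ states with the state space of $C^*(\operatorname{Per}\Lambda)$, a singleton only when $\Lambda$ is aperiodic (Ledrappier's $2$-graph is periodic \cite{paskraebuweaver-periodic}; a bare-hands illustration is the $1$-graph with one vertex and one loop, where $C^*(\Lambda)\cong C(\mathbb{T})$, the preferred dynamics is trivial, every state is a KMS$_1$ trace, and evaluation at $1\in\mathbb{T}$ violates your identity). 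Since your subsequent Perron--Frobenius computation is correct and would pin the state down uniquely, validity of your first step would prove unqualified uniqueness for every strongly connected finite $k$-graph, contradicting these examples; so the gap cannot be closed by any argument of this shape, and uniqueness must simply be cited from \cite{aHLRS3}, as the paper does. A smaller inaccuracy: the product $A=A_1\cdots A_k$ of an irreducible commuting family need not itself be irreducible (take two vertices with $A_1=A_2$ the order-two permutation matrix, so $A_1A_2=I$), so the eigenvector step should be routed through the family $\{A_i\}$ via Proposition~3.1 of \cite{aHLRS3} rather than through $A$.
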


\begin{proof}
Proposition~\ref{prop-main-exampl} shows that the measure space $(\Lambda^\infty, M)$ with the prefixing maps $\{\sigma_\lambda:Z(s(\lambda))\to Z(\lambda)\}$ and the coding maps $\{\sigma^m:\Lambda^\infty\to \Lambda^\infty\}$ forms a $\Lambda$-semibranching function system. Also it shows that $\Phi_{\sigma_\lambda}=\rho(\Lambda)^{-d(\lambda)}$ for $\lambda\in\Lambda$. Since $\rho(\Lambda)\in (0,\infty)^k$, Theorem~\ref{thm:KMS} implies that the state $\phi$ on $C^*(\Lambda)$ defined by
\[
\phi(s_\mu s^*_\nu)=\delta_{\mu,\nu}M(Z(\mu))
\]
is a KMS$_1$ state of $(C^*(\Lambda),\alpha^r)$.

The uniqueness of $\phi$ follows from Theorem 11.1 of \cite{aHLRS3}.
\end{proof}

 We   now apply Theorem~\ref{thm:KMS} to  our Cantor-type fractal subspace $X$ with the measure $\mu$ associated to the Hausdorff measure given in Theorem~\ref{thm-RS-Perron-Frobenius}.

\begin{cor} \label{cor-temp-eq-haus-measu}
Let $\Lambda$ be a  strongly connected finite $k$-graph with $\{0,1\}$ vertex matrices $A_1,...,A_k$, such that $A_1 \cdots A_k$ is also a $\{0,1\}$ matrix, as  in Section~\ref{sec:example}. Let $X=\Psi(\Lambda^\infty)$ be the Cantor-type fractal subspace of $[0,1]$ with the probability measure $\mu=\frac{1}{H^s(X)}H^s$ given in Theorem~\ref{thm-RS-Perron-Frobenius}, where $H^s$ is the Hausdorff measure on $X$ associated to its Hausdorff dimension $s$. Define the dynamics $\alpha'$ on $C^*(\Lambda)$ by
\[
\alpha'_t=\gamma_{\rho(\Lambda)^{\frac{it}{s}}},
\]
where $\gamma$ is the gauge action on $C^*(\Lambda)$. Then the measure $\mu$ on $X$ defines a KMS state of the system $(C^*(\Lambda),\alpha')$ at the inverse temperature $\beta=s$.
\end{cor}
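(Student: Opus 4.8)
The plan is to deduce this corollary directly from Theorem~\ref{thm:KMS}, so the whole task reduces to choosing the parameters $C \in (0,\infty)^k$ and $\omega \in (0,\infty)$ correctly and verifying that the $\Lambda$-semibranching function system on $(X,\mu)$ has the constant Radon-Nikodym derivative demanded there. Recall from Section~\ref{sec:example} that, because $\Psi\colon \Lambda^\infty \to X$ is a measurable bijection and $\mu$ is defined by $\mu(E) = M(\Psi^{-1}(E))$, the transferred maps $\tau_\lambda = \Psi\circ\sigma_\lambda\circ\Psi^{-1}$ and $\tau^m = \Psi\circ\sigma^m\circ\Psi^{-1}$ already form a $\Lambda$-semibranching function system on the probability space $(X,\mu)$; thus Theorem~\ref{thm:KMS} is applicable once its hypotheses on the derivative and the dynamics are matched.

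First I would compute the Radon-Nikodym derivative $\Phi_{\tau_\lambda} = \tfrac{d(\mu\circ\tau_\lambda)}{d\mu}$ of the transferred system. For measurable $E \subseteq D_\lambda$, the identity $\Psi^{-1}(\tau_\lambda(E)) = \sigma_\lambda(\Psi^{-1}(E))$ gives $\mu(\tau_\lambda(E)) = M(\sigma_\lambda(\Psi^{-1}(E)))$, and since Proposition~\ref{prop-main-exampl} shows $\Phi_{\sigma_\lambda} = \rho(\Lambda)^{-d(\lambda)}$ is a positive constant, this equals $\rho(\Lambda)^{-d(\lambda)}\mu(E)$. Hence $\Phi_{\tau_\lambda} = \rho(\Lambda)^{-d(\lambda)}$, still a positive constant on $X$: the measure isomorphism $\Psi$ carries the constant derivative of the Perron-Frobenius system over to the Hausdorff-measure system unchanged.

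Next I would set $C = \rho(\Lambda)^{1/s} = (\rho(A_1)^{1/s},\dots,\rho(A_k)^{1/s}) \in (0,\infty)^k$ and $\omega = s$; note $s \in (0,\infty)$ by Proposition~\ref{prop:fin-Hausd-dim}, so these are legitimate choices. Then the dynamics match, $\gamma_{C^{it}} = \gamma_{\rho(\Lambda)^{it/s}} = \alpha'_t$, and the derivative has the required form, since
\[ C^{-\omega d(\lambda)} = \left(\rho(\Lambda)^{1/s}\right)^{-s\, d(\lambda)} = \rho(\Lambda)^{-d(\lambda)} = \Phi_{\tau_\lambda}. \]
Theorem~\ref{thm:KMS} then yields at once that $\mu$ defines a KMS state $\phi$ of $(C^*(\Lambda),\alpha')$ at inverse temperature $\beta = \omega = s$, with $\phi(s_\lambda s^*_\nu) = \delta_{\lambda,\nu}\,\mu(R_\lambda)$.

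Most of this is bookkeeping, with Theorem~\ref{thm-RS-Perron-Frobenius} supplying the substantive input that $\mu = \tfrac{1}{H^s(X)}H^s$ coincides with $M\circ\Psi^{-1}$ and so scales according to $\rho(\Lambda)$. I expect the only genuinely delicate point to be the first step, namely confirming that the Radon-Nikodym derivative is preserved by the isomorphism $\Psi$: this is precisely what pins the exponent $\omega$ in Theorem~\ref{thm:KMS} to the value $s$, and thereby produces a KMS state whose inverse temperature equals the Hausdorff dimension of $X$.
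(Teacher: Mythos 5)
Your proposal is correct, and its first (and key) step coincides with the paper's: both arguments transfer the constant Radon--Nikodym derivative $\Phi_{\sigma_\lambda}=\rho(\Lambda)^{-d(\lambda)}$ from $(\Lambda^\infty,M)$ to $(X,\mu)$ via the measure isomorphism $\Psi$, using Theorem~\ref{thm-RS-Perron-Frobenius} to identify $\mu=\frac{1}{H^s(X)}H^s$ with $M\circ\Psi^{-1}$. Where you diverge is the final step. The paper first invokes Corollary~\ref{cor:KMS} to produce a KMS state of $(C^*(\Lambda),\alpha^r)$ at inverse temperature $\beta=1$ for the \emph{preferred} dynamics, and then observes that $\alpha'_t=\alpha^r_{ts^{-1}}$, so that Lemma~2.1 of \cite{aHKR} (a general fact that rescaling time by $s^{-1}$ converts a KMS$_1$ state into a KMS$_s$ state) yields the conclusion. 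You instead apply Theorem~\ref{thm:KMS} directly, choosing $C=\rho(\Lambda)^{1/s}$ and $\omega=s$ so that $C^{-\omega d(\lambda)}=\rho(\Lambda)^{-d(\lambda)}=\Phi_{\tau_\lambda}$ and $\gamma_{C^{it}}=\alpha'_t$; the inverse temperature $\beta=\omega=s$ then falls out of the theorem with no rescaling lemma needed. Your route is slightly more self-contained, since it exploits the free parameters $\omega$ and $C$ that Theorem~\ref{thm:KMS} was built to accommodate and avoids the external citation; the paper's route has the expository advantage of exhibiting the state explicitly as the KMS$_1$ state of the preferred dynamics (the same state as in Corollary~\ref{cor:KMS}), making transparent that only the dynamics, not the state, has changed. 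Both are complete proofs of the statement.
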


\begin{proof}
As shown in Theorem~\ref{thm-RS-Perron-Frobenius},  the map $\Psi: \Lambda^\infty \to X$ is an isometry with respect to the measures $\mu, M$.   Thus, since we know from Proposition \ref{prop-main-exampl} that the Radon-Nikodym derivatives on $(\Lambda^\infty, M)$ are given by $\Phi_{\sigma_\lambda} = \rho(\Lambda)^{-d(\lambda)}$, it follows that  the Radon-Nikodym derivatives  on $(X, \mu)$ are also of the form
\[
\Phi_{\tau_\lambda}=\rho(\Lambda)^{-d(\lambda)}\quad\text{for all $\lambda\in\Lambda$.}
\]
Then Corollary~\ref{cor:KMS} implies that the formula given by
\[
\phi(s_\mu s^*_\nu)=\delta_{\mu,\nu}M(R_\mu)
\]
 defines a KMS state $\phi$ of $(C^*(\Lambda),\alpha^r)$ at the inverse temperature $\beta=1$, where $\alpha^r$ is the preferred dynamics. Since
\[
\alpha'_t=\alpha^r_{ts^{-1}},
\]
Lemma~2.1 of \cite{aHKR} implies that $\phi$ is a KMS state of $(C^*(\Lambda),\alpha')$ at the inverse temperature $\beta=s$, which gives the desired result.
\end{proof}

We can also use the  idea of Theorem \ref{thm:KMS} to construct KMS states associated to semibranching function systems with constant Radon-Nikodym derivatives.

\begin{cor}
\label{cor-constRN-KMS}
Let $I$ be a finite index set.
Let $(X,\mu)$ be a probability measure space that gives a semibranching function system with prefixing maps $\{\sigma_i:D_i\to R_i\}_{i\in I}$ and coding map $\sigma$.  Let  $A=(A_{ij})$ be a $\{0,1\}$-matrix  satisfying
\[\chi_{D_i} = \sum_jA_{ij} \chi_{R_j}.\]
 Let $\mathcal{O}_A$ be the associated Cuntz-Krieger algebra on $L^2(X,\mu)$ as established in Proposition 2.5 of \cite{MP}. Suppose that there exists  $\omega \in (0,\infty)$ and $C\in (0,\infty)$ such that  the Radon-Nikodym derivative is given by $\Phi_{\sigma_i}=C^{-\omega}$ for all  $i\in I$. Let $\alpha$ be the dynamics on $\mathcal{O}_A$ given by
\[
\alpha_t=\gamma_{C^{ it}},
\]
where $\gamma$ is the gauge action on $\mathcal{O}_A$. Then the measure $\mu$ on $X$ defines a KMS state $\phi$ of the system $(\mathcal{O}_A,\alpha)$ at the inverse temperature $\beta=\omega$, by
\begin{equation}\label{eq:state_M}
\phi(S_i S_j^*)=\delta_{i,j}\, \mu(R_i)\quad\text{for $i,j\in I$.}
\end{equation}

\end{cor}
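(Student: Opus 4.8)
The plan is to run the argument of Theorem~\ref{thm:KMS} in the directed-graph (edge-matrix) setting, the only genuinely new feature being that the source projection $S_i^* S_i$ is now multiplication by $\chi_{D_i}$ rather than a single range projection. First I would record the operator picture coming from Proposition~2.5 of \cite{MP}: each $S_i S_i^*$ is multiplication by $\chi_{R_i}$, each $S_i^* S_i$ is multiplication by $\chi_{D_i}$, and the hypothesis $\chi_{D_i} = \sum_j A_{ij}\chi_{R_j}$ is exactly the Cuntz--Krieger relation $S_i^* S_i = \sum_j A_{ij} S_j S_j^*$, while $\sum_i S_i S_i^* = \chi_X = 1$ by Condition~(a) of Definition~\ref{defn:sbfs}. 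Extending \eqref{eq:state_M} to all spanning elements by $\phi(S_\lambda S_\nu^*) = \delta_{\lambda,\nu}\,\mu(R_\lambda)$, where $\lambda,\nu$ range over admissible finite words and $R_\lambda = \sigma_\lambda(D_\lambda)$, I would check that $\phi$ is a state. It is unital since $\phi(1) = \sum_i \mu(R_i) = \mu(X) = 1$, and it is positive because it factors as the canonical faithful conditional expectation $E(S_\lambda S_\nu^*) = \delta_{\lambda,\nu} S_\lambda S_\lambda^*$ onto the commutative diagonal subalgebra (whose generators $S_\lambda S_\lambda^* = \chi_{R_\lambda}$ are mutually commuting multiplication operators, so it sits inside $L^\infty(X,\mu)$) followed by integration against the probability measure $\mu$.

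Next I would pin down the dynamics and the key measure identity. Since $\gamma_z(S_i) = z S_i$, each generator is analytic and
\[
\alpha_t(S_\lambda S_\nu^*) = C^{\,it(|\lambda|-|\nu|)} S_\lambda S_\nu^*, \qquad \alpha_{i\omega}(S_\lambda S_\nu^*) = C^{-\omega(|\lambda|-|\nu|)} S_\lambda S_\nu^*,
\]
where $|\lambda|$ denotes word length; by Proposition~8.12.3 of \cite{Ped} it then suffices to verify the KMS condition on these analytic spanning elements. Exactly as in \eqref{eq:KMS_measure}, the hypothesis $\Phi_{\sigma_i} = C^{-\omega}$ yields $\mu(R_i) = \int_{D_i}\Phi_{\sigma_i}\,d\mu = C^{-\omega}\mu(D_i)$, and applying the chain rule to the composite $\sigma_\lambda = \sigma_{i_1}\circ\cdots\circ\sigma_{i_n}$, whose Radon--Nikodym derivative is the constant $C^{-\omega n}$, I obtain
\[
\mu(R_\lambda) = C^{-\omega|\lambda|}\,\mu(D_{s(\lambda)}).
\]
Since $\mu(D_{s(\lambda)}) = \phi(S_\lambda^* S_\lambda)$ in the operator picture above, this is precisely the scaling relation $\phi(S_\lambda S_\nu^*) = \delta_{\lambda,\nu}\, C^{-\omega|\lambda|}\, \phi(S_\lambda^* S_\lambda)$, with $\omega$ playing the role of $\beta$.

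Finally I would conclude the KMS property. The cleanest route is to invoke the Cuntz--Krieger analogue of the characterization used in Theorem~\ref{thm:KMS} (Proposition~3.1(b) of \cite{aHLRS2}), applied to $\mathcal{O}_A$ viewed as a graph $C^*$-algebra: a state satisfying the displayed scaling relation for $\alpha_t = \gamma_{C^{it}}$ is automatically $\mathrm{KMS}_\omega$. Alternatively one verifies $\phi(ab) = \phi(b\,\alpha_{i\omega}(a))$ directly on $a = S_\lambda S_\nu^*$, $b = S_\eta S_\zeta^*$ by expanding the products $S_\nu^* S_\eta$ and $S_\zeta^* S_\lambda$ via the Cuntz--Krieger relations (the analogue of \eqref{eq:MCE}) and extracting the diagonal terms that $\phi$ detects. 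I expect the main obstacle to lie in this final reduction: correctly matching the edge-indexed generators $S_i$ of $\mathcal{O}_A$ against the vertex-indexed conventions of the $k$-graph KMS machinery—precisely the edge/vertex distinction emphasized in the introduction—so that the scaling relation above lands in exactly the form required by the cited characterization.
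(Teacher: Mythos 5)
Your proposal is correct and follows essentially the same route as the paper's proof: both derive the key identity $\mu(R_i)=\int_{D_i}\Phi_{\sigma_i}\,d\mu=C^{-\omega}\mu(D_i)$ from the constant Radon--Nikodym derivative, use the hypothesis $\chi_{D_i}=\sum_j A_{ij}\chi_{R_j}$ (equivalently, the Cuntz--Krieger relation $S_i^*S_i=\sum_j A_{ij}S_jS_j^*$) to turn this into the scaling relation $\phi(S_i^*S_i)=C^{\omega}\phi(S_iS_i^*)$, and conclude the KMS$_\omega$ property by the same characterization-of-KMS-states machinery underlying Theorem \ref{thm:KMS}. The only difference is one of detail: the paper checks the scaling relation on single generators and asserts sufficiency, while you additionally spell out the word-level relation $\mu(R_\lambda)=C^{-\omega|\lambda|}\mu(D_{s(\lambda)})$ and justify positivity of $\phi$ via the diagonal conditional expectation --- points the paper leaves implicit.
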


\begin{proof}
Recall that, if $N = |I|$, the Cuntz-Krieger algebra $\mathcal{O}_A$ is the universal $C^*$-algebra generated by $N$ partial isometries $S_i$ such that
\[S_i^* S_i = \sum_j A_{ij} S_j S_j^* \text{ and } \sum_i S_i S_i^* = 1.\]
Moreover, Proposition 2.5 of \cite{MP} establishes that, if we define $T_i \in B(L^2(X, \mu))$ by
\[T_i\xi(x) = \chi_{R_i}(x) \left(\Phi_{\sigma_i}(\sigma(x))\right)^{-1/2} \xi(\sigma(x)),\]
then the operators $T_i$ generate a representation of $\mathcal{O}_A$.

%
Since $\mu$ is a probability measure, $\phi$ is positive and $\phi(1)=1$. Thus $\phi$ is a state on $\mathcal{O}_A$.

For each $i\in I$, we compute
\begin{equation}\label{eq:cor_KMS2}
\mu(R_i)=\int_{D_i}\Phi_{\sigma_i}\,d\mu=C^{-\omega}\mu(D_i).
\end{equation}
To see that $\phi$ is a KMS state of $(\mathcal{O}_A,\alpha)$ at the inverse temperature $\beta=\omega$, it suffices to show that
\[
\phi(S^*_i S_i)=C^\omega\phi(S_iS^*_i)\quad\text{for each $i \in I$.}
\]
Using (2.8) and (2.9) of \cite{MP}, and \eqref{eq:cor_KMS2}, we obtain
\[
\phi(S^*_i S_i)=\sum_{j} A_{ij}\phi(S_jS^*_j)=\sum_{j} A_{ij}\mu(R_j)=\mu(D_i)=C^\omega\mu(R_i)=C^\omega\phi(S_i S^*_i).
\]
Thus $\phi$ is a KMS state of $(\mathcal{O}_A,\alpha)$ at the inverse temperature $\beta=\omega$.

\end{proof}

\begin{rmk}
 When we apply Corollary~\ref{cor-constRN-KMS} to the representation of $\mathcal{O}_A$ on $L^2(\Lambda_A, \mu_A)$, for the Cantor set $\Lambda_A$ described in (2.1) of \cite{MP}, we recover the KMS state of Corollary~2.20 in \cite{MP}.
\end{rmk}

\section{Wavelets on $L^2(\Lambda^\infty, M)$}
\label{sec:wavelets}
We now proceed to construct an orthonormal decomposition of $L^2(\Lambda^\infty, M)$, which we call a \emph{wavelet decomposition}, following Section 3 of \cite{MP}.
Instead of obtaining our wavelets by scaling and translating a basic family of wavelet functions, our wavelet decomposition is constructed by applying (some of) the operators $S_\lambda$ of Theorem \ref{th-branching-sys-gives-repres} to a basic family of functions in $L^2(\Lambda^\infty, M)$.

While we can use the same procedure to obtain a family of orthonormal functions in $L^2(X, \mu)$ whenever we have a $\Lambda$-semibranching function system on $(X, \mu)$, we cannot establish in general that this orthonormal decomposition densely spans $L^2(X, \mu)$ -- we have no analogue of Lemma \ref{lem:squares-span} for general $\Lambda$-semibranching function systems.  Moreover, by Corollary \ref{cor:-constant-RN}, every $\Lambda$-semibranching system  on $\Lambda^{\infty}$ with constant Radon-Nykodim derivative is endowed with the Perron-Frobenius measure. Thus, in this section, we restrict ourselves to the case of $(\Lambda^\infty, M)$.  We also note that our proofs in this section follow the same ideas found in the proof of Theorem 3.2  of \cite{MP}.

For a path $\lambda \in \Lambda$, let $\Theta_\lambda$ denote the characteristic function of $Z(\lambda) \subseteq \Lambda^\infty$.  Recall that $M$ is the unique Borel probability measure on $\Lambda^\infty$ satisfying \eqref{eq:measure}.

\begin{lemma}
\label{lem:squares-span}
Let $\Lambda$ be a strongly connected $k$-graph.
Then the span of the set
\[ S := \{\Theta_\lambda: d(\lambda) = (n, n, \ldots, n) \text{ for some } n \in \N\}\]
is dense in $L^2(\Lambda^\infty, M)$.
\label{dense-squares}
\end{lemma}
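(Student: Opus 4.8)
The plan is to reduce the statement to the standard fact that characteristic functions of cylinder sets are dense in $L^2(\Lambda^\infty, M)$, and then to check that every such characteristic function already lies in the linear span of $S$. Throughout I use the standing assumption that $\Lambda$ is finite, so that $\Lambda^\infty$ is compact (as noted in the excerpt), and that strongly connected implies source-free (Lemma~2.1 of \cite{aHLRS3}).

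First I would establish density of the indicators $\{\Theta_\lambda : \lambda \in \Lambda\}$. The cylinder sets $Z(\lambda)$ form a basis for the topology on $\Lambda^\infty$ and generate its Borel $\sigma$-algebra; since $\Lambda^\infty$ is a compact, totally disconnected metric space, each $Z(\lambda)$ is clopen, and every clopen subset is a finite union of cylinder sets. Hence the linear span of $\{\Theta_\lambda\}$ coincides with the locally constant functions on $\Lambda^\infty$, which are uniformly dense in $C(\Lambda^\infty)$; as $M$ is a finite Borel measure on a compact metric space, $C(\Lambda^\infty)$ is dense in $L^2(\Lambda^\infty, M)$. Therefore $\sp\{\Theta_\lambda : \lambda \in \Lambda\}$ is dense in $L^2(\Lambda^\infty, M)$.

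The key step is then to show $\Theta_\lambda \in \sp(S)$ for every $\lambda \in \Lambda$. Given $\lambda$ with $d(\lambda) = (m_1, \ldots, m_k)$, set $n = \max_i m_i$ and $p = (n, \ldots, n) - d(\lambda) \in \N^k$. Since $\Lambda$ is source-free, $s(\lambda)\Lambda^p \neq \emptyset$, and since $\Lambda$ is finite this set is finite. The factorization property then yields the disjoint decomposition
\[ Z(\lambda) = \bigsqcup_{\nu \in s(\lambda)\Lambda^p} Z(\lambda\nu), \]
where each $\lambda\nu$ has degree $d(\lambda\nu) = (n, \ldots, n)$. Passing to indicator functions gives
\[ \Theta_\lambda = \sum_{\nu \in s(\lambda)\Lambda^p} \Theta_{\lambda\nu}, \]
a finite sum of elements of $S$, so $\Theta_\lambda \in \sp(S)$. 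Consequently $\sp\{\Theta_\lambda : \lambda \in \Lambda\} \subseteq \sp(S)$, and since the former is dense, so is $\sp(S)$.

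I do not expect a serious obstacle. The combinatorial decomposition of $Z(\lambda)$ into diagonal-degree cylinders is immediate from the factorization property and source-freeness, so the only point requiring any care is the first paragraph, namely justifying that cylinder-set indicators span a dense subspace of $L^2$; this rests on compactness of $\Lambda^\infty$ together with the clopen basis of cylinder sets (or, equivalently, on the fact that the cylinder sets form a semiring generating the Borel $\sigma$-algebra of the finite measure space $(\Lambda^\infty, M)$).
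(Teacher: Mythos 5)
Your proof is correct and takes essentially the same approach as the paper: both decompose an arbitrary cylinder set $Z(\lambda)$ as a disjoint union $\bigsqcup_{\nu \in s(\lambda)\Lambda^p} Z(\lambda\nu)$ of cylinders of diagonal degree $(n,\ldots,n)$ with $n = \max_i m_i$, and then appeal to the density of cylinder-set indicators in $L^2(\Lambda^\infty, M)$. The only difference is cosmetic: you justify that last density step in more detail (compactness, clopen basis, uniform density of locally constant functions) than the paper, which simply cites that the cylinder sets form a basis for the topology.
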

\begin{proof}
Let $\mu \in \Lambda$.  We will show that we can write $\Theta_\mu$ as a linear combination of functions from $S$.

Suppose $d(\mu) = (m_1, \ldots, m_k)$.  Let $m = \max\{m_i\}$ and let $n = (m, m, \ldots, m) - d(\mu)$.  Let
\[C_\mu = \{\lambda \in \Lambda: r(\lambda) = s(\mu), d(\lambda) = n\}.\]
  In words, $C_\mu$ consists of the paths that we could append to $\mu$ such that $\mu \lambda \in S$: if $\lambda \in C_\mu$ then the product $\mu \lambda$ is defined and \[d(\mu \lambda) = d(\mu) + d(\lambda) = (m, m, \ldots, m).\]

Observe that, if $\lambda, \lambda' \in C_\mu$ and $\mu \lambda = \mu \lambda'$, the factorization property tells us that $\lambda = \lambda'$.  Similarly, since $d(\mu \lambda) = d(\mu \lambda') = (m, \ldots, m)$, if $x \in Z(\mu \lambda) \cap Z(\mu \lambda')$ then the fact that $x(0, (m, \ldots, m))$ is well defined implies that
\[ x(0, (m, \ldots, m)) = \mu \lambda = \mu \lambda' \Rightarrow \lambda = \lambda'.\]
It follows that if $\lambda, \lambda' \in C_\mu$, then $Z(\mu \lambda) \cap Z(\mu \lambda') = \emptyset$.  Since every infinite path $x\in Z(\mu)$ has a well-defined ``first segment'' of shape $(m, \ldots, m)$ -- namely $x(0, (m, \ldots, m))$ -- every $x \in Z(\mu)$ must live in $Z(\mu \lambda)$ for precisely one $\lambda \in C_\mu$.  Thus, we can write $Z(\mu)$ as a disjoint union,
\[Z(\mu) = \cup_{\lambda \in C_\mu} Z(\mu \lambda).\]

It follows that $\Theta_\mu = \sum_{\lambda \in C_\mu} \Theta_{\mu \lambda}$, so the span of functions in $S$ includes the characteristic functions of cylinder sets.  Since the cylinder sets $Z(\mu)$ form a basis for the topology on $\Lambda^\infty$ with respect to which $M$ is a Borel measure, it follows that the span of $S$ is dense in $L^2(\Lambda^\infty, M )$ as claimed.
\end{proof}

Since the span of the functions in $S$ is dense in $L^2(\Lambda^\infty, M)$, we will show how to decompose $\overline{\sp}\; S$ as
an orthogonal direct sum,
\[\overline{\sp}\; S = \mathcal{V}_{0,\Lambda}\oplus\bigoplus_{j=0}^\infty \mathcal{W}_{j,\Lambda},\]
where we can construct $\mathcal{W}_{j,\Lambda}$ for each $j > 1$ from the functions in $\mathcal{W}_{0,\Lambda}$ and (some of) the operators $S_\lambda$ discussed in Section \ref{sec:sbfs}.  The construction of $\mathcal{W}_{0,\Lambda}$ is similar to that given in Section 3 of \cite{MP} for the case of {a directed graph}.

We begin by setting $\mathcal{V}_{0,\lambda}$ equal to the subspace spanned by the functions $\{\Theta_v: v \in \Lambda\z\}.$ Indeed the functions $\{\Theta_v: v \in \Lambda\z\}$ form an orthogonal set in $L^2(\Lambda^\infty, M),$ whose span includes those functions that are constant on $\Lambda^{\infty}$:
\begin{align*}
\int_{\Lambda^\infty} \Theta_v \overline{\Theta_w} \, dM &= \delta_{v,w} M(Z(v)) \\
&= \delta_{v,w} x^\Lambda_{v},
\end{align*}
and $$\sum_{v\in \Lambda\z}\Theta_v(x)\equiv 1.$$
Thus,  the set $\{\frac{1}{\sqrt{x^\Lambda_v}} \Theta_v: v\in \Lambda\z\}$ is an orthonormal set in $S.$  We define
\[\mathcal{V}_{0,\Lambda}:= \overline{\sp}\{\frac{1}{\sqrt{x^\Lambda_v}} \Theta_v: v\in \Lambda\z\}.\]

To construct $\mathcal{W}_{0,\Lambda}$, let $v \in \Lambda\z$ be arbitrary.  Let
\[D_v = \{\lambda: d(\lambda) = (1, \ldots, 1) \text{ and } r(\lambda) = v\},\]
and write $d_v$ for $|D_v|$ (note that by our hypothesis that $\Lambda$ is a finite $k$-graph we have $d_v < \infty$).

Define an inner product on $\C^{D_v}$ by
\begin{equation}
\label{inner-prod}
\l \vec{v}, \vec{w} \r = \sum_{\lambda \in D_v}  \overline{v_\lambda} w_\lambda \rho(\Lambda)^{(-1, \ldots, -1)} x^\Lambda_{s(\lambda)}\end{equation}
and let $\{ c^{m, v}\}_{m=1}^{d_v - 1}$ be an orthonormal basis for the orthogonal complement of $(1, \ldots, 1) \in \C^{D_v}$ with respect to this inner product.

For each pair $(m,v)$ with $m \leq d_v - 1$ and $v$ a vertex in $\Lambda\z$, define
\[f^{m, v} = \sum_{\lambda \in D_v} c^{m,v}_\lambda \Theta_\lambda.\]
Note that by our definition of the measure  $M$ on $\Lambda^\infty$, since the vectors $c^{m,v}$ are orthogonal to $(1, \ldots, 1)$ in the inner product \eqref{inner-prod}, we have
\begin{align*}
\int_{\Lambda^\infty} f^{m,v} \, dM & = \sum_{\lambda \in D_v} c^{m,v}_\lambda M(Z(\lambda)) \\
&= \sum_{\lambda \in D_v} c^{m,v}_\lambda \rho(\Lambda)^{(-1, \ldots, -1)} x^\Lambda_{s(\lambda)}\\
&= 0
\end{align*}
for each $(m,v)$.  Moreover, the arguments of Lemma \ref{dense-squares} tell us that $\Theta_\lambda \Theta_{\lambda'} = \delta_{\lambda, \lambda'} \Theta_\lambda$ for any $\lambda, \lambda'$ with $d(\lambda) = d(\lambda') = (1, \ldots, 1)$.  Consequently, and if $\lambda\in D_v, \lambda' \in D_{v'}$ for $v \not= v'$, we have  $\Theta_\lambda \Theta_{\lambda'} =0$.  It follows that
\begin{align*}
\int_{\Lambda^\infty} f^{m,v} \overline{f^{m',v'}} \, dM
&= \delta_{v, v'} \sum_{\lambda\in D_v} c^{m,v}_\lambda \overline{c^{m',v}_\lambda} M(Z(\lambda)) \\
&= \delta_{v, v'} \delta_{m,m'}
\end{align*}
since the vectors $\{c^{m,v}\}$ form an orthonormal set with respect to the inner product \eqref{inner-prod}.
Thus, the functions $\{f^{m,v}\}$ are an orthonormal set in $L^2(\Lambda^\infty, M)$.  We define
\[\mathcal{W}_{0,\Lambda} :=\overline{ \sp} \{f^{m,v}: v \in \Lambda\z, 1 \leq m \leq d_v\}.\]

Note that $V_0$ is orthogonal to $\mathcal{W}_{0,\Lambda}$.  To see this, let $ g \in V_0$ be arbitrary, so $g = \sum_{v\in \Lambda\z} g_v \Theta_v$ with $g_v \in \C$ for all $v$. Then
\begin{align*}
\int_{\Lambda^\infty} \overline{f^{m,v'}(x)} g(x) \, dM &= \delta_{v', v} g_v \sum_\lambda c^{m,v}_\lambda M(Z(\lambda)) \\
&= 0,
\end{align*}
since $\sum_\lambda c^{m,v}_\lambda M(Z(\lambda)) = 0$ for all fixed $v, m$.  Thus, $g$ is orthogonal to every basis element $f^{m,v}$ of $\mathcal{W}_{0,\Lambda}$.

The basis $\{f^{m,v}:\; v \in \Lambda\z, 1 \leq m \leq d_v\}$ for $\mathcal{W}_{0,\Lambda}$ is the analogue for $k$-graphs of the  {\it graph wavelets} of \cite{MP}.
As the following Theorem shows, by shifting these functions using the operators $S_\lambda$ of Theorem \ref{th-branching-sys-gives-repres}, we obtain an orthonormal basis for $L^2(\Lambda^\infty, M)$,  and thus $k$-graph wavelets associated to a separable representation of $C^*(\Lambda)$.

\begin{thm}
\label{Wavelets-Theo}  Let $\Lambda$ be a strongly connected finite $k$-graph.
For each fixed $j \in \N_+$ and $v\in\Lambda^0,$ let
\[C_{j,v}:= \{\lambda \in \Lambda: s(\lambda)=v, d(\lambda) = (j,j, \ldots, j)\},\]
 and let $S_\lambda$ be the operator on $L^2(\Lambda^\infty, M)$ described in Theorem \ref{th-branching-sys-gives-repres}.
Then
\[\{S_\lambda f^{m,v}: v \in \Lambda^0, \;\lambda \in C_{j,v},\; 1 \leq m \leq d_v\}\]
is an orthonormal set, and moreover, if $\lambda \in C_{j,v},\; \mu \in C_{i,v'}$ for $0<i<j,$ we have
\[ \int_{\Lambda^\infty} S_\lambda f^{m,v} \overline{S_\mu f^{m',v'}} \, dM = 0 \ \forall \ m, m'.\]

It follows that defining
\[{\mathcal W}_{j,\Lambda} := \clsp\{S_\lambda f^{m,v}: v \in \Lambda^0, \lambda \in C_{j,v}, 1 \leq m \leq d_v\},\]
for $j \geq 1$, we obtain an orthonormal decomposition
\[L^2(\Lambda^\infty, M) = \clsp\;S = {\mathcal V_0}\oplus \bigoplus_{j=0}^\infty {\mathcal W}_{j,\Lambda}.\]
\end{thm}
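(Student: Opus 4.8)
The plan is to reduce the entire theorem to a single computation of how $S_\lambda$ acts on the indicator functions $\Theta_\eta$, together with the defining property of the vectors $c^{m,v}$. On $(\Lambda^\infty, M)$ we have $R_\lambda = Z(\lambda)$, $\tau^{d(\lambda)} = \sigma^{d(\lambda)}$, and $\Phi_{\sigma_\lambda} = \rho(\Lambda)^{-d(\lambda)}$ by Proposition \ref{prop-main-exampl}, so the formula of Theorem \ref{th-branching-sys-gives-repres} reads $S_\lambda\xi(x) = \chi_{Z(\lambda)}(x)\,\rho(\Lambda)^{d(\lambda)/2}\,\xi(\sigma^{d(\lambda)}x)$. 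Using $\chi_{Z(\lambda)}(x)\chi_{Z(\eta)}(\sigma^{d(\lambda)}x) = \chi_{Z(\lambda\eta)}(x)$ when $s(\lambda)=r(\eta)$, I get $S_\lambda\Theta_\eta = \delta_{s(\lambda),r(\eta)}\,\rho(\Lambda)^{d(\lambda)/2}\,\Theta_{\lambda\eta}$, hence for $\lambda \in C_{j,v}$,
\[
S_\lambda f^{m,v} = \rho(\Lambda)^{(j,\dots,j)/2}\sum_{\eta\in D_v} c^{m,v}_\eta\,\Theta_{\lambda\eta},\qquad d(\lambda\eta) = (j{+}1,\dots,j{+}1).
\]
I would record two facts at the outset. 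First, $\int_{\Lambda^\infty} S_\lambda f^{m,v}\,dM = 0$: since $M(Z(\lambda\eta)) = \rho(\Lambda)^{-(j+1,\dots,j+1)}x^\Lambda_{s(\eta)}$, this reduces to $\sum_{\eta} c^{m,v}_\eta x^\Lambda_{s(\eta)} = 0$, which holds because $c^{m,v}$ is orthogonal to $(1,\dots,1)$ in the inner product \eqref{inner-prod}. Second, $S_\lambda f^{m,v}$ is supported on $Z(\lambda)$ and is constant on each cylinder of degree $d(\lambda\eta)$.

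For the within-level orthonormality I would expand $\langle S_\lambda f^{m,v}, S_{\lambda'}f^{m',v'}\rangle$ and use $\langle \Theta_{\lambda\eta},\Theta_{\lambda'\eta'}\rangle = \delta_{\lambda\eta,\lambda'\eta'}M(Z(\lambda\eta))$. As $d(\lambda\eta)=d(\lambda'\eta')=(j{+}1,\dots,j{+}1)$, the factorization property forces $\lambda\eta=\lambda'\eta'$ to split as $\lambda=\lambda'$ and $\eta=\eta'$; thus the product vanishes unless $\lambda=\lambda'$ (whence $v=v'$), in which case it collapses to $\rho(\Lambda)^{-(1,\dots,1)}\sum_{\eta}\overline{c^{m',v}_\eta}c^{m,v}_\eta x^\Lambda_{s(\eta)}$, which is exactly the inner product \eqref{inner-prod} of $c^{m,v}$ against $c^{m',v}$ and so equals $\delta_{m,m'}$. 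For the cross-level statement with $0<i<j$, the clean argument is that $\overline{S_\mu f^{m',v'}}$ is constant on the support $Z(\lambda)$ of $S_\lambda f^{m,v}$: since $d(\mu\eta')=(i{+}1,\dots,i{+}1)\le(j,\dots,j)=d(\lambda)$, the set $Z(\lambda)$ lies inside a single cylinder of degree $(i{+}1,\dots,i{+}1)$, on which each $\Theta_{\mu\eta'}$ is constant. Hence
\[
\int_{\Lambda^\infty} S_\lambda f^{m,v}\,\overline{S_\mu f^{m',v'}}\,dM = (\text{const})\int_{\Lambda^\infty} S_\lambda f^{m,v}\,dM = 0
\]
by the first recorded fact. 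Taking $g\in\mathcal{V}_{0,\Lambda}$ (constant on degree-$0$ cylinders) in place of $S_\mu f^{m',v'}$ gives $\mathcal{V}_{0,\Lambda}\perp\mathcal{W}_{j,\Lambda}$ for every $j\ge 0$ by the same argument.

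Finally, for the decomposition I would prove completeness by induction on $n$, setting $P_n := \operatorname{span}\{\Theta_\lambda : d(\lambda)=(n,\dots,n)\}$, an increasing chain whose union is dense in $L^2(\Lambda^\infty,M)$ by Lemma \ref{lem:squares-span}. Here $P_0=\mathcal{V}_{0,\Lambda}$; each generator $S_\lambda f^{m,v}$ of $\mathcal{W}_{n,\Lambda}$ lies in $P_{n+1}$ and is orthogonal to $P_n$ (again by the constant-on-support argument, now with $d(\mu)=(n,\dots,n)=d(\lambda)$). It then remains to match dimensions: the factorization property gives $\dim P_{n+1}=\sum_v |C_{n,v}|\,d_v$ and $\dim P_n=\sum_v |C_{n,v}|$, while orthonormality yields $\dim\mathcal{W}_{n,\Lambda}=\sum_v |C_{n,v}|(d_v-1)$, so $P_{n+1}=P_n\oplus\mathcal{W}_{n,\Lambda}$. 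Iterating gives $P_n=\mathcal{V}_{0,\Lambda}\oplus\bigoplus_{j=0}^{n-1}\mathcal{W}_{j,\Lambda}$, and passing to the closure produces the asserted decomposition. I expect the dimension bookkeeping in this last step — keeping the factorization-based count of degree-$(n,\dots,n)$ paths exact, and checking $P_{n+1}=P_n\oplus\mathcal{W}_{n,\Lambda}$ rather than a proper containment — to be the main point requiring care; the orthogonality relations themselves collapse routinely onto the two facts recorded at the start.
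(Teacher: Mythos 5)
Your proof is correct. The setup (the formula $S_\lambda f^{m,v} = \rho(\Lambda)^{d(\lambda)/2}\sum_{\eta\in D_v}c^{m,v}_\eta\,\Theta_{\lambda\eta}$, which holds because the Radon--Nikodym derivatives are constant) and the within-level orthonormality computation coincide with the paper's, but you diverge in two places, both to good effect. For cross-level orthogonality the paper factors $\lambda = \lambda'\nu$ with $\nu\in C_{j-i,v}$ and uses the Cuntz--Krieger relation $S_{\lambda'}^*S_{\lambda'} = S_{s(\lambda')}$ to telescope $\int_{\Lambda^\infty} S_\lambda f^{m,v}\,\overline{S_{\lambda'}f^{m',v'}}\,dM$ down to $\int_{\Lambda^\infty} S_\nu f^{m,v}\,\overline{f^{m',v'}}\,dM$, which it then expands and kills using $\sum_\eta c^{m,v}_\eta\,\rho(\Lambda)^{-d(\eta)}x^\Lambda_{s(\eta)} = 0$; your observation that $S_\mu f^{m',v'}$ is constant on the support $Z(\lambda)$ of $S_\lambda f^{m,v}$ (because $(i+1,\ldots,i+1)\le d(\lambda)$ forces $Z(\lambda)$ into a single cylinder of degree $(i+1,\ldots,i+1)$) reduces everything to the single fact $\int_{\Lambda^\infty} S_\lambda f^{m,v}\,dM=0$, avoids the operator-algebraic step entirely, and disposes of $\mathcal{V}_{0,\Lambda}\perp\mathcal{W}_{j,\Lambda}$ by the same stroke. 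Second, the paper's proof stops after establishing mutual orthogonality of the $\mathcal{W}_{j,\Lambda}$, leaving the completeness assertion (``It follows that \ldots'') implicit, resting on Lemma \ref{lem:squares-span}; your induction $P_{n+1}=P_n\oplus\mathcal{W}_{n,\Lambda}$, with the factorization-property count $\dim P_{n+1}=\sum_v|C_{n,v}|\,d_v$ against $\dim P_n+\dim\mathcal{W}_{n,\Lambda}=\sum_v|C_{n,v}|+\sum_v|C_{n,v}|(d_v-1)$, supplies exactly the missing argument that the wavelet subspaces exhaust $\clsp S$ rather than merely sitting orthogonally inside it. (One small point in your favor: your count correctly uses $d_v-1$ vectors $c^{m,v}$ per vertex, matching the construction of the orthonormal basis of the orthogonal complement of $(1,\ldots,1)$ in $\C^{D_v}$; the range ``$1\le m\le d_v$'' appearing in the theorem statement is a typo.)
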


\begin{proof}
We first observe that if $s(\lambda) = v$, then 
\[S_\lambda f^{m,v} = \sum_{\mu \in D_v} c^{m,v}_\mu \rho(\Lambda)^{d(\lambda)/2} \Theta_{\lambda \mu} ,\]
because the Radon-Nikodym derivatives $\Phi_{\sigma_\lambda}$ are constant on $Z(s(\lambda))$ for each $\lambda \in \Lambda$, thanks to Proposition \ref{prop:kgraph}.
In particular, if $d(\lambda) = 0$ then $S_\lambda f^{m,v} = f^{m,v}$.
Thus, if $d(\lambda) = d( \lambda') = (j, \ldots, j)$, the factorization property and the fact that $d(\lambda \mu ) = d(\lambda'\mu') = (j+1, \ldots, j+1)$ for every $\mu \in D_{s(\lambda)}, \mu' \in D_{s(\lambda')}$ implies that
\[\Theta_{\lambda \mu} \Theta_{\lambda' \mu'} = \delta_{\lambda, \lambda'} \delta_{\mu,\mu'} \ \forall \ \mu \in D_{s(\lambda)}, \mu' \in D_{s(\lambda')}.\]
In particular, $S_\lambda f^{m, v} \overline{S_{\lambda'}f^{m',v'}} = 0$ unless $\lambda = \lambda'$ (and hence $v = v'$).  Moreover,
\begin{align*}
\int_{\Lambda^\infty} S_\lambda f^{m,v} \overline{S_\lambda f^{m',v}} \, dM &=
\sum_{\mu \in D_v} c^{m,v}_\mu \overline{c^{m',v}_\mu}\rho(\Lambda)^{d(\lambda)} M(Z(\lambda \mu))\\
&= \sum_{\mu \in D_v} c^{m,v}_\mu \overline{c^{m', v}_\mu}  \rho(\Lambda)^{-d(\mu)} x^{\Lambda}_{s(\mu)}\\
&=   \delta_{m, m'},
\end{align*}
by the definition of the vectors $c^{m,v}_\mu$, since $d(\mu) = (1,1,\ldots, 1)$ for each $\mu \in D_v$.

Now, suppose $\lambda\in C_{1,v}$.  Observe that $S_\lambda f^{m,v} \overline{f^{m',v'}}$ is nonzero only when $v' = r(\lambda)$, and in this case we have
\begin{align*}
\int_{\Lambda^\infty}S_\lambda f^{m,v} \overline{f^{m',v'}} \, dM &=  \sum_{\mu \in D_v} c^{m,v}_\mu \overline{c^{m',v'}_\lambda} \rho(\Lambda)^{d(\lambda)/2} M(Z(\lambda\mu)) \\
&= \overline{c^{m',v'}_\lambda} \rho(\Lambda)^{-d(\lambda)/2} \sum_{\mu \in D_v}c^{m,v}_\mu \rho(\Lambda)^{-d(\mu)} x^\Lambda_{s(\mu)} \\
&= 0.
\end{align*}
Thus, ${\mathcal W}_{0,\Lambda}$ is orthogonal to ${\mathcal W}_{1,\Lambda}$.

In more generality, suppose that $\lambda \in C_{j,v}, \lambda' \in C_{i,v'},\;j>i\geq 1$.  We observe that $S_\lambda f^{m,v} \overline{S_{\lambda'} f^{m', v'}}$ is nonzero only when $\lambda = \lambda' \nu$ with  $\nu \in C_{j-i,v}$, so we have
$$S_\lambda f^{m,v} \overline{S_{\lambda'} f^{m', v'}} =S_{\lambda'}(S_{\nu}f^{m,v})\overline{S_{\lambda'} f^{m', v'}}.$$
Consequently,
\begin{align*}
\int_{\Lambda^\infty} S_\lambda f^{m,v} \overline{S_{\lambda'} f^{m', v'}} \, dM &= \int_{\Lambda^\infty}S_{\lambda'}(S_{\nu}f^{m,v})\overline{S_{\lambda'} f^{m', v'}}\,dM\\
&= \int_{\Lambda^\infty}(S_{\nu}f^{m,v})\overline{S_{\lambda'}^*S_{\lambda'} f^{m', v'}}\,dM\\
&= \int_{\Lambda^\infty}(S_{\nu}f^{m,v})\overline{f^{m', v'}}\,dM\\
&=  \sum_{\mu \in D_v} c^{m,v}_\mu \overline{c^{m',v'}_\nu} \rho(\Lambda)^{d(\nu)/2} M(Z(\nu\mu)) \\
&= \overline{c^{m',v'}_\nu} \rho(\Lambda)^{-d(\nu)/2} \sum_{\mu \in D_v}c^{m,v}_\mu \rho(\Lambda)^{-d(\mu)} x^\Lambda_{s(\mu)} \\
&= 0.
\end{align*}
Thus, the sets ${\mathcal W}_{j,\Lambda}$ are mutually orthogonal as claimed.
\end{proof}

 In the setting of Theorem
\ref{thm-RS-Perron-Frobenius},  where the infinite path space $\Lambda^{\infty}$ embeds as a fractal subset $X$ of $[0,1],$ following \cite{jonsson} and \cite{MP}, the polynomials of degree $m$ ${\mathcal P}_m$ restricted to $\Lambda^{\infty}$ will form a subspace of dimension $m+1$ of $L^2(X, \mu)$.  For every integer $m\geq 0$ we can define the space $V_{0,\Lambda, m}$ to be those functions in $L^2(X,\mu)$ which, when restricted to the cylinder sets $Z(v),\; v\in \Lambda^0,$ are a polynomial of degree $\leq m.$  Such functions can be thought of as generalized $m$-splines.  Then as in Section 3 of \cite{MP}, it is possible to form generalized multiresolution analyses from the space $V_{0,\Lambda,m}$ by using linear algebra and the partial isometries $S_{\lambda}$ for $\lambda\in C_{j,v}.$


\end{document}